\documentclass[12pt,leqno]{article}
\usepackage{amssymb,amsmath,amscd}
\usepackage{pb-diagram}
\usepackage{graphics}
\usepackage{fancyhdr}
\usepackage[all]{xy}
\usepackage{amsthm}
\usepackage{mathtools}
\usepackage{latexsym}
\usepackage{bbm}

\hoffset -5mm \voffset -18mm \setlength{\oddsidemargin}{0 cm}
\setlength{\evensidemargin}{0 cm} \setlength{\textwidth}{16.8cm}
\setlength{\textheight}{22.9cm}

\newtheorem{theorem}{Theorem}[section]
\newtheorem{lemma}[theorem]{Lemma}

\newtheorem{proposition}[theorem]{Proposition}
\newtheorem{corollary}[theorem]{Corollary}
\newtheorem{remark}[theorem]{Remark}
\newtheorem{claim}[theorem]{Claim}
\newtheorem{definition}[theorem]{Definition}
\newcommand{\ff}{\textbf{f}}
\newcommand{\hh}{\textbf{h}}
\newcommand{\lc}{\textbf{l}^2}
\newcommand{\lcp}{\textbf{l}}
\renewcommand{\aa}{\textbf{a}}

\newcommand{\xx}{\textbf{x}}
\newcommand{\oo}{\textbf{0}}
\newcommand{\yy}{\textbf{y}}
\newcommand{\zz}{\textbf{z}}

\newcommand{\cc}{\textbf{c}}
\newcommand{\ra}{\rightarrow}

\newcommand{\R}{\mathbb{R}}
\newcommand{\RN}{\mathbb{R}^d}
\newcommand{\lr}{\textbf{l}^2}
\newcommand{\Z}{\mathbb{Z}}
\newcommand{\N}{\mathbb{N}}
\newcommand{\la}{\lambda}
\newcommand{\Id}{\mathrm{Id\,}}
\newcommand{\Ker}{\mathrm{Ker\, }}
\newcommand{\Coker}{\mathrm{Coker\,}}
\newcommand{\ind}{\operatorname{ind}}

\newcommand{\hof}{\hbox{\,of\, }}
\newcommand{\Ind}{\mathrm{Ind\, }}
\newcommand{\rk}{\operatorname{rk}}
\newcommand{\sk}{\medskip}
\newcommand{\hfor}{\hbox{ \,for\, }}
\newcommand{\hif}{\hbox{ \,if\, }}

\newcommand{\cB}{\mathcal{B}}
\newcommand{\cT}{\mathcal{T}}
\newcommand{\cS}{\mathcal{S}}
\newcommand{\cC}{\mathcal{C}}
\newcommand{\cO}{\mathcal{O}}

\newcommand{\hand}{\hbox{ \,and\, }}
\newcommand{\im}{\operatorname{Im}}
\newcommand{\sign}{\operatorname{sign}}
\newcommand{\ö}{^}

\title{Global bifurcation of homoclinic trajectories of discrete dynamical
systems}

\author{Jacobo Pejsachowicz
\footnote{Dipartamento di Matematica, Politecnico di Torino, Corso Duca Degli Abruzzi 24, 10129 Torino,
Italy, e-mail: jacobo.pejsachowicz@polito.it } \&
        Robert Skiba\footnote{Faculty of Mathematics and Computer Science,
        Nicolaus Copernicus University, Chopina 12/18, 87-100 Torun, Poland, e-mail: robo@mat.umk.pl}}

\begin{document}
\maketitle
\begin{abstract}
We prove the existence of an unbounded connected
branch of nontrivial homoclinic trajectories of a family of
discrete nonautonomous asymptotically hyperbolic systems
parametrized by a circle under assumptions involving the
topological properties of the asymptotic stable bundles.

\indent{\bf Key words and phrases}: Homoclinics, bifurcation,
index bundle\newline \indent {\bf MSC:} 39A28, 34C23, 58E07
\end{abstract}

\section{Introduction }

It was shown in \cite{Pej-Ski} that the different twisting of the asymptotic stable
bundles at  plus and minus infinity of a family of discrete,
nonautonomous, asymptotically hyperbolic systems parametrized by a
circle, leads to the appearance of homoclinic trajectories bifurcating from the trivial branch of
stationary solutions.
\newline\indent In \cite{Pej-Ski}, only small homoclinic trajectories close to the
stationary branch were found. Here  we will  improve our previous results for  the  problem.
 It turns out that the same topological condition supplemented
with other listed below, which ensure the properness of the
nonlinear operator naturally associated to the problem, allows us
to establish the existence of  homoclinic trajectories of
arbitrarily large  norm. What is more, using  the  global
bifurcation theory  of \cite{Pej-Rab}, we show the existence of
connected branches of nontrivial homoclinics going from  the
stationary branch to infinity.
\newline \indent  Much as in our previous paper we will translate  the appearance of
homoclinic trajectories  into a problem of  bifurcation of  zeros
of a parametrized family of $C^1$-Fredholm maps $G\colon S^1\times
X\to X$, where $X$ is a function space naturally associated to the
problem. Our approach uses  a peculiarity of the topological
degree for proper Fredholm maps of index zero. Namely, that it  is
preserved only up to sign under  homotopies of Fredholm maps
(\cite{Fi-Pej-Rab,Pej-Rab}). As a matter of fact, that the degree
can change sign
 along a homotopy will be central to our arguments. To say it shortly, we turn the lack of
homotopy invariance of the degree of Fredholm maps into  an useful
instrument  for the analysis  of bifurcation phenomena.
\newline \indent  We refer to \cite{Potz,Potz-1,Potz-2, Potz-3, Ras}
for other results on bifurcation of bounded solutions of
difference equations, however to our best knowledge both the
results and the methods of this paper are novel. The relation
between the topological properties of the asymptotic bundles and
bifurcation of homoclinics is far more subtle than the classical
spectral analysis at the potential bifurcation points. Even for
the simplest topologically nontrivial parameters space $S\ö1$ it
requires  topological instruments which at a first glance may
appear unfamiliar to many. However  we believe that  the interest
of the result and the generality of the method in proof provides
enough reasons for its introduction.
\newline \indent The paper is organized as follows. In Section $2$  we introduce the problem and the basic invariant
measuring the topological nontriviality of the asymptotic
bundles. Then we state our main theorem about the existence of a
connected branch of nontrivial homoclinic solutions which connects
the stationary branch to infinity through homoclinics of
arbitrarily large norm.
\newline\indent In Section $3$ we state and prove an index theorem for families  of linear
Fredholm operators  which will be used  in order to show that the
twisting of asymptotic bundles forces the appearance of homoclinic
trajectories (see also \cite{Ba, Sa}). The proof  is similar to
the one given in \cite{Pej-Ski} except for the fact that (in order
to ensure properness of the relevant map) we have to work in a
different function space (see also \cite{Pejs-2, Pejs-3, Pejs-4}).
Section $4$ is devoted to show that $G$ is a continuous family of
$C^1$-Fredholm maps. The continuity and smoothness of  $G$
involves only standard arguments,  many of them taken from
\cite{Potz-2, Potz-3}. The Fredholm property is derived from the
asymptotic hyperbolicity of the linearization at the stationary
solution. In order to apply the global bifurcation theory  for
$C\ö1$-Fredholm maps  the map $G$ has to be proper on closed
bounded sets. Using ideas from \cite{Se-St} we will prove
properness of $G$ in Section $5$ (see also \cite{Mo}).  In Section
$6$ we discuss the generalized homotopy property of the
topological degree constructed in \cite{Pej-Rab} and we prove our
main theorem using the computation of the index bundle from
Section $3$ (see also \cite{Bar}). Section $7$ contains a
nontrivial example illustrating our result.

\section{The main theorem}

All considered topological spaces are metric and all single-valued
maps between spaces are continuous. Given a normed space
$(\mathbb{E},\|\cdot\|)$, $\bar B(x,r),$ and $B(x,r)$, will denote
the closed and open disk centered at $x$ of radius $r$
respectively. The Euclidean norm in $\R^d$ is denoted by
$|\cdot|$; $\bar B_d(x,r)$ (resp. $B_d(x,r)$) is the closed (resp.
open) disk centered at  $x\in\R^d$; $d\geq 1$, of radius $r$.
Additionally, throughout the article, a norm of a matrix $M$ will
be denoted by $|M|$.
\newline\indent
A nonautonomous discrete dynamical system on $\R^d$ (or   is defined by
a doubly infinite sequence of  maps \begin{equation}\label{-1}\ff=\{f_n\colon \R^d\ra\R^d
\mid n\in \Z\}.\end{equation}
 A sequence $\xx=(x_n)$ which solves the equation 
\begin{equation}\label{1}
x_{n+1}=f_n(x_n).
\end{equation}
is called  trajectory of the system.  
A constant trajectory of $\ff$  is called  {\it stationary}. 
In \cite{Potz} \eqref{1} is termed as  "nonautonomous
difference equation", its  solutions  are trajectories of the
corresponding  dynamical system.\newline\indent

 In what follows we always assume that each $f_n$ is at least $C^1$ and   that $f_n(0)=0.$ In this case the sequence
$\oo =(0_n) $ is  a stationary  trajectory of $\ff$. A trajectory
$\xx=(x_n)$ of $\ff$ is called {\it homoclinic} to $\oo$, or
simply a homoclinic trajectory, if $\lim\limits_{n\ra
\pm\infty}x_n=0$. The stationary solution $\oo$ is trivially
homoclinic to itself. Here we will be interested in nontrivial
trajectories homoclinic to $\oo.$
 Every  homoclinic
trajectory of $\ff$ belongs to the space
\begin{equation*}
{\bf c}(\R^d):=\left\{\xx\colon \Z\ra \R^d \;\;\Big|\;\;
\lim_{|n|\ra \infty } x_n =0\right\}
\end{equation*}
equipped with the norm $\|\xx\|_{\infty}:=\sup\limits_{n\in
\Z}|x_n|$ (see \cite{Pej-Ski}). However in this paper we restrict
ourselves to the following space
\begin{align*}
{\bf l}^2:&=\left\{\xx\colon \Z\to \R^d\;\;\Big|\;\;
\|\xx\|:=\left(\sum_{n\in
\Z}|x_n|^2\right)^{1/2}<\infty\right\}\subset {\bf c}(\R^d).
\end{align*}

The  value $\xx(n)=x_n$  of an element $\xx \in \lc$ for $n\in \Z$
will be denoted also by $p_n(\xx)$ according to the convenience.

\sk
We will show
below that, under appropriate assumptions on the dynamical system
$\ff$, the Nemytskii (substitution) operator $F\colon \lc\ra \lc$
given by
\begin{equation}\label{2}
F(\xx)=(f_n(x_n))
\end{equation}
is a well defined  $C^1$-map verifying $F(\oo)= \oo.$ In this way
nontrivial homoclinic trajectories become the nontrivial solutions
of the equation $S\xx-F(\xx) =\oo,$ where $S\colon \lc\ra \lc$ is
the shift operator  given by
\begin{equation}\label{shift-S}
S\xx = (x_{n+1}).
\end{equation}

It should be noted that we have  considered  the space $\lc$ here
because on this space we are able to provide simple conditions
which are necessary and sufficient for $S-F$  to be
proper on closed bounded subsets of $\lc.$
\newline\indent The linearization of the system $\ff$ at the stationary solution
$\oo$ is the nonautonomous linear dynamical system $\aa\colon
\Z\times \R^d\ra\R^d$ defined by the sequence of matrices
$(a_{n})\subset \R^{d\times d},$ with $a_n = Df_n(0)$. The
corresponding linear difference equation is defined by
\begin{equation}\label{1'}x_{n+1} = a_{n}x_n.
\end{equation}

We will deal only with discrete nonautonomous dynamical systems
whose linearization at $\oo$ is asymptotic  for $n \ra \pm \infty$
to an autonomous linear hyperbolic dynamical system $\aa$ (i.e.,
verifying $a_n =a$ for all $n\in \Z$, where $a$ is a hyperbolic matrix).
We will call systems with the above property {\it asymptotically
hyperbolic}.

 An
invertible matrix  $a$ is called {\it hyperbolic} if $a$ has no
eigenvalues of norm one. The spectrum $\sigma(a)$ of a hyperbolic
matrix $a$ consists of  two disjoint closed subsets
$ \sigma^s =\sigma(a)\cap\{|z| < 1\}$ and $\sigma^u=\sigma(a)\cap\{|z|> 1\}$, which implies that 
$\R^d$ has an $a$-invariant spectral decomposition $\R^d=E^s(a)
\oplus E^u(a)$, where $E^s(a) $ (respectively $E^u(a)$)  is the
direct sum real parts of  the generalized eigenspaces
corresponding to eigenvalues of $a$ inside of the unit disk
(respectively outside of the unit disk).

  The spaces  $E^{s/u}(a)$ are the stable and unstable subspace of the autonomous dynamical system associated to $a$ respectively,  since$ \zeta \in E^s(a)$ if and only if
$\lim\limits_{n\ra\infty}a^n\zeta=0,$ while $\zeta\in E^u(a)$
if and only if $\lim\limits_{n\ra\infty}a^{-n}\zeta=0.$\\\\
A {\it  $C^1$-family of dynamical systems parametrized by
the unit circle $S^1$} is defined by a sequence of maps
\begin{equation}\label{cfd}
\ff=\left\{f_n\colon S^1\times \R^d \ra \R^d \mid n\in\Z\right\}
\end{equation}
such that $f_n$ is $C^1,$ for all $n\in \Z.$

In what follows we will also assume everywhere that
$f_{n}(\la,0)=0$, for all $\lambda\in S^1$ and $n\in \mathbb{Z}$.
We will use $\ff_\la$ to denote the dynamical system corresponding
to the parameter value $\la.$ We will use $\ff_\la$ to denote the
dynamical system corresponding to the parameter value $\la.$
Alternatively one can think of $\ff$ as a double infinite sequence
of  $C^1$-maps $f_n \colon [a,b]\times \R^d \ra \R^d$ such that
$f_n(a,-)$ coincides with $f_n(b,-)$ up to the first order.

We will say that $(\la, \xx)$ is a {\it homoclinic solution} for
the family $\ff$ if $(\la,\xx)$ solves the parameter-dependent
difference equation:
\begin{equation}\label{main-system}
x_{n+1}=f_n(\la,x_n),\;\text{ for all }n\in\Z,
\end{equation} or equivalently,  if  $\xx=(x_n)$ is a homoclinic trajectory of the dynamical system $\ff_\la.$
Homoclinic solutions of (\ref{main-system}) of the form $(\la,{\bf
0})$ are called trivial and the set $S^1\times\{\bf 0\}$ is called
the {\it trivial or stationary branch.}

Our aim is to  show how the topology of  the parameter space
$S^1$, on which  our dynamical system depends,  forces  the
appearance of branches of homoclinic  trajectories  connecting
small  homoclinic trajectories to the arbitrarily large ones. For
this  we will apply the global bifurcation theory  for families of
$C^1$-Fredholm maps established in \cite{Pej-Rab} to the family
defined by
\begin{equation}\label{gem}
G(\la,\xx) = S \xx - F(\la,\xx),
\end{equation}
where $ F\colon S^1\times \text{$\lc$}\ra \lc$ is  the
parametrized substitution (Nemytskii)  operator
$F(\la,\xx):=(f_n(\la,x_n)).$

We will assume that the family of discrete dynamical systems
$\ff\colon  \Z\times  S^1\times \R^d\ra \R^d$ satisfies the
following conditions:
\begin{enumerate}
\item[$(A1)$] For any $M>0$ and $\varepsilon>0$ there exists $0<\delta<M$ such that for  all
$(\la_1,x_1), (\la_2,x_2) \in S^1\times \bar B_d(0,M)$  with $
d\big((\la_1,x_1),(\la_2,x_2)\big)<\delta$ one has
\begin{equation*}
\sup_{n\in\Z}\left|\frac{
\partial f_n(\la_1 ,x_1)}{\partial x}-
\frac{\partial f_n(\la_2 ,x_2)}{\partial x}\right|<\varepsilon
\text{ and } \sup_{n\in\Z}\left|\frac{
\partial f_n(\la_1,x_1)}{\partial \la}-
\frac{\partial f_n(\la_2,x_2)}{\partial \la}\right|<\varepsilon,
\end{equation*}
where $d$ is the product distance in the metric space $S^1\times
\R^d\subset \R^2\times \R^d.$
\item[$(A2)$] The family of matrices
\begin{equation*}
a_n(\la):=\displaystyle\frac{\partial f_n(\la,0)}{\partial x}
\xrightarrow[n\rightarrow\pm\infty]{} a(\lambda,\pm\infty)
\end{equation*}
(uniformly with respect to $\la\in S^1$), where
$a(\lambda,\pm\infty)$ is a hyperbolic matrix. Moreover, we assume
that for some (and hence for all) $\la\in S^1$, the limits
$a(\la,+\infty)$ and $a(\la,-\infty)$ have the same number of
eigenvalues (counting algebraic multiplicities) inside of the unit
disk.
\item[$(A3)$] There exists $\la_0\in S^1$ (say $\la_0 =1$) such  that
the following two difference equations
\begin{equation*}
x_{n+1}=f_{n}(1,x_n)\quad \text{and}\quad x_{n+1}=a_n(1)x_n
\end{equation*}
admit only the trivial solution $(x_n=0)_{n\in\Z}$. Equivalently,
$\ff(1)$ and $\aa(1)$ have no nontrivial homoclinic trajectories.
\item[$(A4)$] For any $x\in \R^d$ and $\la\in S^1$,
\begin{equation*}
f_n(\la,x)\xrightarrow[n\rightarrow\pm\infty]{}
f^{\infty}_{\pm}(\la,x)
\end{equation*}
(uniformly with respect to any bounded set $B\subset \R^d$) and
the following two difference equations
\begin{equation*}
x_{n+1}=f^{\infty}_{+}(\la,x_n)\quad \text{and}\quad
x_{n+1}=f^{\infty}_{-}(\la,x_n)
\end{equation*}
admit, for any $\la\in S^1$, only the trivial solution
$(x_n=0)_{n\in\Z}$.
\end{enumerate}

It follows easily from  $(A2)$ the map $\la \ra a(\la,\pm\infty)$ is a continuous
family of hyperbolic matrices. Moreover,  the  vector spaces
$E^s(\la,\pm\infty)$ and $E^u(\la,\pm\infty)$ whose elements are
the real parts of the generalized eigenvectors of
$a(\la,\pm\infty)$ corresponding to the eigenvalues with absolute
value smaller (respectively greater) than $1$ are fibers of a pair
of vector bundles $E^s(\pm\infty)$ and $E^u(\pm\infty)$ over $S^1$
which decompose the trivial bundle $\Theta(\R^d)$ with fiber
$\R^d$ into a direct sum:
\begin{equation}\label{dirsum}
E^s (\pm\infty)\oplus  E^u(\pm\infty) = \Theta(\R^d).
\end{equation}
The vector bundles   $E^s(\pm \infty)$  and $E^u(\pm \infty)$ will be
called  \textit{stable} and \textit{unstable}  asymptotic bundles
at $\pm\infty.$  Our main theorem relates the appearance of
homoclinic solutions to the topology of the asymptotic stable
bundles $E^s(\pm \infty)$. 
In what follows it will be convenient
to work  with the multiplicative group $\Z_2=\{1,-1\}$
instead of the standard additive $\Z_2=\{0,1\}.$ 
A vector bundle
over $S^1$ is orientable if and only if it is trivial, i.e.,
isomorphic to a product $S^1\times\R^k.$ Moreover, whether a given
vector bundle $E$ over $S^1$ is orientable  or is not is determined by
a   topological invariant $w_1(E)\in\Z_2.$

In order to define $w_1(E)$  let us identify $S^1$ with the
quotient of an interval $ I= [0,1]$ by its boundary $\partial I =
\{0,1\}.$ If $p\colon [0,1]\ra S^1= I/\partial I$  is the
projection,  the pullback bundle $p^*E =E'$ is the vector  bundle
over $I$ with fibers $ E'_t =E_{p(t)}.$  Since $I$ is contractible
to a point, $E'$ is trivial and the choice of an isomorphism
between $E'$ and the product bundle provides $E'$ with a frame,
i.e., a basis $\{e_1(t),...,e_k(t)\}$ of $E'_t $ continuously
depending on $t$. Since  $E'_0 =E_{p(0)}=E_{p(1)}=E'_1,$
$\{e_i(0)\mid 1\leq i\leq k\}$ and $\{e_i(0)\mid 1\leq i\leq k\}$
are two bases  of the same vector space. We define $w_1(E)\in\Z_2$
by
\begin{equation} \label{whitney}
w_1(E):=\sign \det C,\end{equation} where $C$ is the matrix
expressing  the basis $\{e_i(1)\mid 1\leq i\leq k\}$ in terms of
the basis $\{e_i(0)\mid 1\leq i\leq k\}$.

Clearly, 
$w_1(E)$ is independent from the choice of the frame. Moreover,
$w_1(E)=1$ if and only if $E$ is trivial. Indeed, if $E$ is a
trivial bundle, then by definition, $w_1(E)=1.$  On the other
hand, if $w_1(E)=1,$ $\det C>0$,  and there exists a path $C(t)$
with $C(0)=C$ and $C(1)= \Id.$  Then $ f_i(t) = C(t) e_i(t)$ is a
frame such that $f_i(0)= f_i(1)$ and hence $\Phi(t,
x_1,\ldots,x_k) = \left(t,\sum_{i=1}^k x_i f_i(t)\right)$ is an
isomorphism between $S^1\times \R^k$ and $E.$

Under  the isomorphism $H^1(S^1;\Z_2)\cong\Z_2$, the invariant  $w_1(E)$
can be identified with the first Stiefel-Whitney class of $E$.

A point $\la_*\in S^1$ is
a {\it bifurcation point} for homoclinic solutions of
\eqref{main-system} from the trivial  branch of stationary
solutions $ \cT_0=\{(\la,\oo)\mid \la\in S^1\}$  if in every
neighborhood of $(\la_*,\oo)$ there is a point $(\la,\xx)$ such
that $\xx$ is a nontrivial homoclinic solution of
$x_{n+1}=f_n(\la,x_n).$

Bifurcation points from infinity are defined in a similar way.
Namely, $\la_*\in S^1$ is a {\it bifurcation point  from infinity}
for homoclinic solutions of \eqref{main-system} if there is a
sequence $(\la_n,\xx_n)$ of homoclinic solutions of
\eqref{main-system} with
$\la_n\xrightarrow[n\rightarrow\infty]{}\la_*$ and
$\|\xx_n\|\xrightarrow[n\rightarrow\infty]{} \infty$. Due to the
compactness of $S^1,$ any unbounded sequence of solutions contains
a subsequence  $(\la_n,\xx_n)$ such that $\la_n$ converges to a
bifurcation point from infinity. By $B_0$ (resp. $B_{\infty}$) we
will denote the set of all bifurcation points of
\eqref{main-system} from the  trivial branch of stationary
solutions (resp. the set of all bifurcation points of
\eqref{main-system} from infinity).

In order to state our result in a more symmetric form we will
introduce the trivial branch at infinity. The one point
boundification   of a normed space $E$ is the  topological space
$E^+:=E\cup\{\infty\}$  with a base of neighborhoods of
$\{\infty\}$ given by $D\cup \{\infty\},$ where D is a complement
of a closed bounded subset of $E.$   Notice that if  $A\subset E$
is a closed and locally compact subset of $E$, then its closure
$\bar A =A\cup\{\infty\}$ in $E^+$ is the one-point
compactification $A^+ $ of $A.$ A sequence in  $ x_n\in E$ such
that $\|x_n\|\xrightarrow[n\rightarrow\infty]{}\infty$  converges
to the point $\{\infty\} $ in $E^+.$ By definition the subset
$\cT_\infty =\{ (\la,\infty) \mid \la \in S^1\}$ of $S^1\times
{\lr}^+ $  is the {\it trivial branch at $\{\infty\}$}.

The main result of this paper reads as follows:

\begin{theorem}\label{Theorem-A}
If the system \eqref{cfd} verifies $(A1)$--$(A4)$ and if
\begin{equation}\label{w1}
w_1(E^s(+\infty))\neq w_1(E^s(-\infty)),
\end{equation}
then:
\begin{itemize}
\item[\emph{[i]}] The connected  component $\cC_0$ of  $\cT_0$ in the set $\cS\subset S^1\times \emph{$\lr$} $ of
all homoclinic solutions of \eqref{main-system} is unbounded. In
particular, both $B_0$ and $B_\infty$ are nonempty.
\item[\emph{[ii]}] The set $\cS_0= \cS - \cT_0$ of all nontrivial homoclinic solutions of \eqref{main-system}
contains  a continuum $($i.e., closed connected subset $\cC)$
whose closure $\bar \cC$ in $ S^1\times \emph{$\lr$}^+ $
intersects both $\cT_0$ and $\cT_\infty.$
\end{itemize}
\end{theorem}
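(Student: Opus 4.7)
The approach is to apply the global bifurcation theorem for $C^1$-Fredholm maps of \cite{Pej-Rab} to the parametrized operator $G(\la,\xx)=S\xx-F(\la,\xx)$. Sections $4$ and $5$ will establish that $G$ is a continuous family of $C^1$-Fredholm maps of index zero, proper on closed bounded subsets of $S^1\times\lc$, with $G(\la,\oo)=\oo$. Hypothesis $(A3)$ guarantees that the linearization $L(1):=D_{\xx}G(1,\oo)$ is invertible, so the trivial branch $\cT_0$ is non-degenerate at $\la_0=1$, giving a well-defined starting point for the bifurcation analysis.

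The crux of the proof consists in converting the topological hypothesis \eqref{w1} into a parity obstruction to the boundedness of $\cC_0$. The index theorem to be proved in Section $3$ identifies the first Stiefel--Whitney class of the index bundle $\Ind L\in \widetilde{KO}(S^1)\cong\Z_2$ of the linearized family $L(\la)=S-\aa(\la)$ with the product $w_1(E^s(+\infty))\cdot w_1(E^s(-\infty))$ in the multiplicative $\Z_2=\{1,-1\}$; under \eqref{w1} this class is the nontrivial element. By the Fitzpatrick--Pejsachowicz--Rabier theory, this coincides with the sign change (``parity'') of the Fredholm degree of $G(\la,\cdot)$ along a loop traversing $S^1$ based at $\la_0$. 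If $\cC_0$ were bounded, the properness of $G$ would allow us to isolate $\cC_0$ by a bounded open set $\Omega\subset S^1\times\lc$ whose $\la$-slices have no solutions on the boundary; the generalized homotopy property of the Fredholm degree recalled in Section $6$ would then force the local degree at $(1,\oo)$ (which is $\pm 1$ by invertibility of $L(1)$) to be preserved up to sign as $\la$ traverses the circle, giving parity $+1$, a contradiction.

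This establishes [i]: $\cC_0$ is unbounded, and hence by compactness of $S^1$ it must accumulate at some $(\la_*,\infty)\in\cT_\infty$, forcing $B_\infty\neq\emptyset$, while it also meets $\cT_0$ at some point of $B_0$. For [ii], I pass to the compact space $S^1\times\lr^+$ and consider $\overline{\cC_0}$ in this space, which is compact, connected, and meets both $\cT_0$ and $\cT_\infty$. A standard Whyburn-type connectivity lemma, applied after removing the two trivial branches, produces the desired continuum $\cC\subset\cS_0$ of nontrivial solutions whose closure in $S^1\times\lr^+$ intersects both $\cT_0$ and $\cT_\infty$.

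The main obstacle is the identification of the index bundle computation with the parity-based bifurcation invariant, together with the correct implementation of the generalized homotopy property for the Fredholm degree: since this degree is only defined up to sign, a nonzero degree does not by itself force bifurcation; one must exploit the \emph{sign change} around $S^1$. This peculiarity of the Fredholm degree, flagged in the introduction as central to the argument, is precisely what converts the topological hypothesis \eqref{w1} into the existence of a global continuum of homoclinic trajectories connecting $\cT_0$ to $\cT_\infty$.
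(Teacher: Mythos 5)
Your proposal is correct and follows essentially the same route as the paper: isolate the (assumed bounded) component $\cC_0$ via properness and a Whyburn separation argument, lift the loop to $[0,1]$, apply the generalized homotopy property of the base point degree to conclude the parity of the linearized loop is $+1$, and contradict $\sigma(M)=w_1(\Ind L)=w_1(E^s(+\infty))\,w_1(E^s(-\infty))=-1$ from the index bundle theorem; part [ii] then follows from the refined Whyburn lemma in $S^1\times\lr^+$. The only detail left implicit is the construction of the admissible open set $\Omega$ with no solutions on its boundary, which the paper obtains by separating $\cC_0$ from $\cS\cap\partial W$ inside the compact set $\cS\cap W$.
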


Therefore,  not only $B_0$ and $B_\infty$ are not empty but there
is a connected branch of nontrivial homoclinic solutions of
\eqref{main-system} connecting $\cB_0=B_0\times \{\oo\} $ to
$\cB_\infty = B_\infty \times \{\infty\}.$ The theorem will be
proved in Section 6.  The main ingredients of the proof are the
computation of the index bundle of the family of linearized
equations at the  trivial branch in terms of the asymptotic stable
bundles at $\pm \infty$ and the generalized homotopy property of
the base point degree of the family of induced Fredholm maps. The
next section  is entirely devoted to the first of the above
mentioned tools.

\section{The index bundle}
Our goal here  is to establish  the Fredholm property  of
operators induced on functional spaces by a linear asymptotically
hyperbolic systems and to compute the index bundle of  a
parametrized  family  of  such operators.\\\\
Let us shortly  recall the concept of  the  index bundle
of a family of Fredholm operators.  For a comprehensive 
presentation see \cite{Pejs-1} or \cite{At}.  By  $\mathcal{L}(X,Y)$ (resp.
$\mathcal{L}(X)$) we will denote the space of bounded linear
operators between two Banach spaces $X$ and $Y$ (resp. from $X$ into
itself).  A bounded operator
$T\in \mathcal{L}(X,Y)$ is Fredholm if it has finite dimensional kernel and
cokernel. The index of a Fredholm operator  is by definition $\ind
T:=\dim \Ker T - \dim \Coker T.$ The space of all Fredholm
operators will be denoted by $\Phi(X,Y)$ and those of index $k$ by
$\Phi_k(X,Y).$ For each $k,$ $\Phi_k(X,Y)$ is an open subset of
$\mathcal{L}(X,Y).$\\\\
The index bundle extends to families of Fredholm operators the notion  of index of a single Fredholm operator.

 Let  $\{L_\la\colon X \ra Y ; \la \in \Lambda\}$ be a family  of Fredholm operators depends continuously on a parameter $\la$ belonging to some compact  topological space $\Lambda.$  If the kernels $\Ker L_\la$ and cokernels $\Coker L_\la$  form two vector bundles $\Ker L$ and $\Coker L$ over
$\Lambda,$ then, roughly speaking, the index bundle is  the
difference $\Ker L-\Coker L,$  where one gives  a meaning to the
difference of two vector bundles  by working in  the Grothendieck group $KO(\Lambda ),$
which by definition  is the group completion of the abelian
semigroup $\text{Vect} (\Lambda )$ of all isomorphism classes of
real vector bundles over  $\Lambda $ (generalizing the fact that $\Z$ is the group completion of the semigroup $\N$).  The elements of
$KO(\Lambda)$  are called  virtual bundles.  Each virtual bundle
is  a difference $[E] - [F],$ where $E, F$ are vector bundles over
$\Lambda $ and $[E]$ denotes the corresponding element of
$KO(\Lambda ).$ One can show that $ [E] - [F]= 0$ in $KO(\Lambda
)$ if and only if the two vector bundles become isomorphic after
the addition of a  trivial vector bundle to both sides. Taking
complex vector bundles instead of the real ones leads to  the
complex Grothendieck  group denoted by $K(\Lambda).$ In what
follows the trivial bundle  with fiber $\Lambda\times V$ will be
denoted by  $\Theta(V)$ and $\Theta(\R^d)$ will be simplified to
$\Theta^d.$

For a  general family  $L \colon \Lambda \ra  \Phi(X,Y)$  neither the kernels
nor the cokernels of $L_\la$ will form a vector bundle. However,
since $\Coker L_\la$ is finite dimensional, using  compactness of
$ \Lambda,$ one can find a finite dimensional subspace $V \hof\,
Y$ such  that
\begin{equation} \label{1.1}
\hbox{\rm Im}\,L_\la+ V=Y  \ \hbox{\rm for all }\  \la \in
\Lambda.
\end{equation}

Because of the condition \eqref{1.1}  the family of
finite dimensional subspaces $E_{\la}=L_{\la}^{-1}(V)$ defines a
vector bundle over $ \Lambda$ (see \cite {Pejs-1}) with  total
space
\[E= \bigcup_{\la \in \Lambda}\, \{\la\} \times E_\la.\]
By definition, the {\it index bundle}  is the virtual bundle:
\begin{equation} \label{defind}
\Ind L= [E]-[\Theta (V)] \in KO(\Lambda).
\end{equation}

The index bundle have the same  properties as the ordinary
index. Namely, homotopy invariance, additivity with respect to
directs sums, logarithmic property under composition of operators.
Clearly it vanishes if $L$ is a family of isomorphisms. We will
mainly use in the  sequel:
\begin{itemize}
\item[$(i)$] \emph{Homotopy invariance:}  Let $H\colon[0,1]\times   \Lambda\rightarrow  \Phi (X,Y)$ be a homotopy,   then $\Ind  H_0 =\Ind  H_1.$ In particular,
$\Ind(L+K)=\Ind L,$ if $K$ is a family of compact operators.
\item[$(ii)$] \emph{Logarithmic property:}  $\Ind\bigl( LM \bigr) = \Ind L + \Ind M.$
\end{itemize}

We will mostly  work with families of Fredholm operators of index
$0.$ The index bundle of a family of Fredholm operators of index
$0$ belongs to the reduced Grothendieck group $
\widetilde{KO}(\Lambda ),$ i.e.,  the  subgroup generated by
elements $[E] -[F]$  such that $\dim E_\la =\dim F_\la.$ It can be
shown  that  any element $\eta \in \widetilde{KO}(\Lambda)$ can be
written as $[E] -[\Theta^N ].$ Moreover, $[E] -[\Theta^N ] = [E']
-[\Theta^M] $ in $\widetilde{KO}(\Lambda )$ if and only if there
exist two trivial bundles $\Theta $ and $\Theta'$ such that
$E\oplus \Theta$ is isomorphic to  $E'\oplus \Theta',$ (see
\cite[Theorem 3.8]{Hus}).
\\\newline\indent The rest of this section is devoted to the computation  of the index bundle  of the family of operators associated to a family of  linear asymptotically hyperbolic systems.

 Let  $GL(d)$be the set of all invertible matrices
in $\R^{d\times d},$ a map $\aa \colon  \Z \times S^1\ra GL(d)$  is 
a family of  linear asymptotically hyperbolic systems if 
\begin{itemize}
\item[(a)] As $n \ra\pm \infty$ the sequence  $\aa(\la)=(a_n(\la))$ converges
uniformly with respect to $\la\in S^1$ to a family of matrices
$a(\la,\pm \infty)$.
\item [(b)]  $a(\la,\pm \infty)\in GL(d)$ is hyperbolic for all $\la \in S^1.$
\end{itemize}

It is easy to see that (a) implies that $\aa_{\pm}\colon S^1\to
GL(d)$ given by $\aa_{\pm}(\la):=a(\la,\pm \infty)$ are continuous
functions of $\la.$ 

We associate to the family $\aa \colon  \Z
\times S^1\ra GL(d) $ the family of linear operators
\begin{equation*}
L = \left\{L_\la \colon \lc\ra \lc\mid \la \in S^1 \right\}
\end{equation*}
defined by $ L_\la =S-A_\la,$ where $S$ is the shift operator and
$ A_\la \colon \lc\ra \lc $  is the substitution operator $A_\la
\xx\!:=(a_{n}(\la)x_n).$  Since the  sequence $(a_n(\la))$
converges, it is bounded, from which follows immediately that
$A_\la$ and $L_\la $ are well defined bounded operators.
\begin{lemma}\label{lemma-continuity-L}
The map $A\colon S^1\ra \mathcal{L}\!\left(\emph{$\lc$}\right)$
defined by $A(\la):=A_{\la}$ is continuous with respect to the
norm topology of $\mathcal{L}\!\left(\emph{$\lc$}\right)$. Hence
the same holds for the family $L.$
\end{lemma}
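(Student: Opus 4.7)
The strategy is to reduce continuity of $L$ to continuity of $A$, and then exploit the multiplier structure of $A_\la$ together with the uniform asymptotics of $(a_n(\la))$ assumed in (a).

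First I would observe that $L_\la - L_{\la_0} = -(A_\la - A_{\la_0})$, so continuity of $\la \mapsto L_\la$ is equivalent to that of $\la \mapsto A_\la$ in the operator norm of $\mathcal{L}(\lc)$. The operator $A_\la - A_{\la_0}$ acts on $\xx \in \lc$ diagonally, $(A_\la\xx - A_{\la_0}\xx)_n = (a_n(\la) - a_n(\la_0))\,x_n$, and hence
\[
\|A_\la\xx - A_{\la_0}\xx\|^2 \;=\; \sum_{n\in\Z} \bigl|(a_n(\la)-a_n(\la_0))x_n\bigr|^2 \;\leq\; \Bigl(\sup_{n\in\Z}|a_n(\la)-a_n(\la_0)|\Bigr)^{2}\|\xx\|^2.
\]
Therefore it suffices to show that $\sup_{n\in\Z}|a_n(\la) - a_n(\la_0)| \to 0$ as $\la \to \la_0$.

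I would establish this by a standard $\varepsilon/3$ argument, splitting $\Z$ into a tail $|n|\geq N$ and a finite head $|n|<N$. Fix $\varepsilon > 0$. By (a), the convergence $a_n(\la) \to a(\la,\pm\infty)$ is uniform in $\la\in S^1$, so one can pick $N$ such that $|a_n(\la) - a(\la,\pm\infty)| < \varepsilon/3$ for all $\la$ and all $n$ with $\pm n \geq N$. Since the limits $\aa_{\pm}$ are continuous on $S^1$ (as already remarked in the excerpt), shrinking to a neighborhood of $\la_0$ gives $|a(\la,\pm\infty) - a(\la_0,\pm\infty)| < \varepsilon/3$; the triangle inequality then yields $|a_n(\la) - a_n(\la_0)| < \varepsilon$ uniformly on the tail. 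For the finitely many remaining indices $|n| < N$, continuity of each map $\la \mapsto a_n(\la)$ (which is built into the paper's blanket continuity convention) lets me further shrink the neighborhood so that $|a_n(\la) - a_n(\la_0)| < \varepsilon$ on the head as well. Combining both estimates delivers the required supremum bound.

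The main obstacle: there is no deep obstacle here, as this is a warm-up lemma. The only point that genuinely requires care is that the convergence $a_n(\la) \to a(\la,\pm\infty)$ must be uniform in $\la$ — precisely what hypothesis (a) supplies. Without uniformity one could not pass to a fixed finite head, and the supremum of pointwise continuous functions could fail to be continuous, so the norm-continuity of $\la\mapsto A_\la$ hinges crucially on that uniform control.
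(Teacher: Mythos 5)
Your proposal is correct and follows essentially the same route as the paper: the same diagonal estimate $\|(A_\la-A_{\la_0})\xx\|\le \bigl(\sup_n|a_n(\la)-a_n(\la_0)|\bigr)\|\xx\|$, followed by the same $\varepsilon/3$ splitting of $\Z$ into a tail controlled by the uniform convergence in (a) plus continuity of $\aa_\pm$, and a finite head controlled by continuity of each $a_n$. No discrepancies worth noting.
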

\begin{proof}
Fix $\xx=(x_n)\in\lc$. Then
\begin{align*}
\|(A(\la)-A(\mu))\xx\|^2=
\sum_{n\in\Z}\left|(a_n(\la)-a_n(\mu))x_n\right|^2\leq
\sum_{n\in\Z}\left|a_n(\la)-a_n(\mu)\right|^2|x_n|^2.
\end{align*}
Furthermore, $|a_n(\la)-a_n(\mu)|\leq
|a_n(\la)-\aa_{\pm}(\la)|+|a_n(\mu)-\aa_{\pm}(\mu)|+|\aa_{\pm}(\la)-\aa_{\pm}(\mu)|.$
Fix $\varepsilon>0$. Then Assumption $(a)$ implies that there
exists $n_0>0$ such that
\begin{align*}
&|a_n(\la)-\aa_+(\la)|<\varepsilon/3 \quad \text{for all $n\geq
n_0$ and for all $\la\in S^1$},\\
&|a_n(\la)-\aa_-(\la)|<\varepsilon/3 \quad \text{for all $n\leq
-n_0$ and for all $\la\in S^1$}.
\end{align*}
Moreover, there exists $\delta>0$ such that if $d((\la,0),(\mu,0))<\delta$,
$\la,\mu\in S^1$, then
\begin{align*}
|\aa_{\pm}(\la)-\aa_{\pm}(\mu)|\leq \varepsilon/3 \text{ and }
|a_k(\la)-a_k(\mu)|\leq \varepsilon/3 \text{ for all $-n_0<k<
n_0$}.
\end{align*}
Finally, taking into account the above considerations, one obtains
that
\begin{align*}
&\|(A(\la)-A(\mu))\xx\|^2\leq
\sum_{n\in\Z}\left|a_n(\la)-a_n(\mu)\right|^2|x_n|^2\leq
\sum_{n\in\Z}\varepsilon^2|x_n|^2=\varepsilon^2\|\xx\|^2
\end{align*}
provided $d((\la,0),(\mu,0))<\delta$, which implies that $A$ is continuous
with respect to the norm topology of $\mathcal{L}(\lc)$.
\end{proof}

Clearly, $\xx=(x_n)\in \lc$ verifies a linear difference equation
$x_{n+1}=a_n(\la)x_n$ if and only if $L_{\la}\xx=0$. By the
discussion in the previous section  the families $a(\la,\pm
\infty)\in GL(d)$ define two vector bundles $E^s (\pm \infty)$
over $S^1.$ The next theorem  relates the index bundle of the
family $L$ to $E^s (\pm \infty).$

\begin{theorem}\label{prop:ind} Let
${\bf a}\colon \Z\times S^1 \ra GL(d)$ be a continuous  map
verifying $(a)$ and $(b).$ Then the family $L\colon S^1\ra
\mathcal{L}\left(\emph{$\lc$}\right)$ verifies:
\begin{itemize}
\item [$(i)$] $L_\la$ is a Fredholm operator   for
all $\lambda\in S^1$.
\item [$(ii)$] $\Ind L= [E^s (+ \infty)]-[E^s(-\infty)] \in
{KO}(S^1)$.
\end{itemize}
\end{theorem}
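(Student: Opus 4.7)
The plan is to reduce the family $L$ to an asymptotic constant-coefficient model $L^{\infty}$ via a compact perturbation, and then compute $\Ind L^{\infty}$ directly using a trivial stabilization subspace. Introduce the ``split'' operator $L^{\infty}_\la := S - A^{\infty}_\la$, where $A^{\infty}_\la$ is the multiplication operator with matrix $a(\la,+\infty)$ for $n\geq 0$ and $a(\la,-\infty)$ for $n<0$. The coefficients of $A_\la - A^{\infty}_\la$ converge uniformly to $0$ as $|n|\to\infty$, so this difference is the norm-limit of its finite-rank truncations, hence compact. Fredholmness of $L_\la$ therefore reduces to Fredholmness of $L^{\infty}_\la$, which I would verify by direct forward iteration on $n\geq 0$ and backward iteration on $n\leq -1$: the hyperbolicity of $a(\la,\pm\infty)$ gives $\Ker L^{\infty}_\la \cong E^s(+\infty,\la)\cap E^u(-\infty,\la)$ and $\Coker L^{\infty}_\la \cong \R^d/(E^s(+\infty,\la)+E^u(-\infty,\la))$, both finite-dimensional, proving (i).

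\medskip

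For (ii), the straight-line homotopy $t\mapsto L_\la + t(A^{\infty}_\la - A_\la)$ consists of compact perturbations of Fredholm operators, so by homotopy invariance $\Ind L = \Ind L^{\infty}$. Since the fiberwise kernel and cokernel of $L^{\infty}$ may have jumping dimensions, I would stabilize by the trivial rank-$d$ subspace $V := \R^d\cdot\delta_{-1}\subset\lc$ of sequences supported at position $-1$. The iteration analysis above, with the obstruction at position $-1$ now absorbed by $V$, shows that $V + \im L^{\infty}_\la = \lc$ for every $\la\in S^1$; hence $E_\la := (L^{\infty}_\la)^{-1}(V)$ assembles into a vector bundle $E$ over $S^1$ and, by \eqref{defind}, $\Ind L^{\infty} = [E] - [\Theta(V)]$.

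\medskip

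The final step is the bundle-theoretic identification of $E$: a sequence $\xx\in E_\la$ satisfies $L^{\infty}_\la\xx = w\cdot\delta_{-1}$ for some $w\in\R^d$, which forces $x_n = a(\la,+\infty)^n x_0$ for $n\geq 0$ and $x_n = a(\la,-\infty)^{n+1}x_{-1}$ for $n\leq -1$, while $\ell^2$ membership imposes $x_0\in E^s(+\infty,\la)$ and $x_{-1}\in E^u(-\infty,\la)$. Consequently the continuous map $\xx\mapsto(x_0,x_{-1})$ is a vector bundle isomorphism $E\cong E^s(+\infty)\oplus E^u(-\infty)$. Combined with $[\Theta(V)] = [\Theta(\R^d)] = [E^s(-\infty)] + [E^u(-\infty)]$ from \eqref{dirsum}, this gives
\[
\Ind L^{\infty} = [E^s(+\infty)] + [E^u(-\infty)] - [E^s(-\infty)] - [E^u(-\infty)] = [E^s(+\infty)] - [E^s(-\infty)].
\]
The main technical subtlety I expect is verifying continuity in $\la$ of all these bundle identifications, which ultimately rests on continuity of the Riesz projections for $a(\la,\pm\infty)$ and the exponential decay of the iterates of $a(\la,\pm\infty)^{\pm n}$ on the stable/unstable subspaces.
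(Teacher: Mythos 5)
Your proposal is correct, and its first half (reduction to the split constant-coefficient operator by a compact, finite-rank-approximable perturbation) is exactly the paper's first step. Where you genuinely diverge is in the computation of $\Ind L^{\infty}$: the paper factorizes $L^{\infty}_\la = I\circ(L^-_\la\oplus L^+_\la)\circ J$ through the half-line spaces $\lcp^-_{0}$, $\lcp^+_0$, computes $\Ind L^{\pm}$ separately as the kernel bundles $[E^s(+\infty)]$ and $[E^u(-\infty)]$ of surjective families, and then invokes the logarithmic property together with $\Ind J=-[\Theta(\R^d)]$; you instead apply the definition of the index bundle directly, stabilizing by the rank-$d$ subspace $V=\R^d\cdot\delta_{-1}$ that absorbs the matching condition at $n=-1$, and identifying $(L^{\infty}_\la)^{-1}(V)\cong E^s(+\infty)\oplus E^u(-\infty)$ via $\xx\mapsto(x_0,x_{-1})$. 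Both identifications of $E_\la$ and of $\Coker L^{\infty}_\la$ are correct, and the final bookkeeping with $[\Theta(\R^d)]=[E^s(-\infty)]+[E^u(-\infty)]$ matches the paper's. Your route is arguably cleaner in that it avoids the diagram and the logarithmic property, at the cost of having to argue by hand that $E=\bigcup_\la\{\la\}\times E_\la$ is a bundle and that $\xx\mapsto(x_0,x_{-1})$ is a bundle isomorphism (which is immediate, being a fixed bounded linear map restricted to $E$ and a fiberwise isomorphism); the paper's factorization buys a one-line reduction of $L^-$ to $L^+$ by time reversal.

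One point needs more care than ``direct forward iteration'': the claims that $V+\im L^{\infty}_\la=\lc$ and that $\Coker L^{\infty}_\la\cong\R^d/(E^s(\la,+\infty)+E^u(\la,-\infty))$ both rest on the assertion that the half-line problem $x_{n+1}-a(\la,+\infty)x_n=y_n$, $n\ge 0$, admits an $\lc$-solution for \emph{every} $\lc$ right-hand side (and symmetrically for $n\le -2$). Naive forward iteration from an arbitrary $x_0$ solves the recursion but does not produce a square-summable sequence; one must construct the solution by convolution with the exponentially decaying Green's kernel $g(n)=a^{n-1}(\mathbbm{1}_{\Z^+}(n)\Id-P^u)$, which lies in $l^1$ and hence maps $\lc$ to $\lc$. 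This is precisely the content of the paper's Lemma~\ref{ab-lemma} (quoted from Abbondandolo--Majer), and your argument should cite or reprove it rather than treat surjectivity of the half-line operators as a consequence of iteration alone.
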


In particular, applying to  $\Ind L$ the rank homomorphism,   $\rk([E] -[F]) = \dim E_\la -\dim
F_\la$, we obtain
\begin{equation*}
\ind L_\la =\dim E^s (+ \infty) - \dim E^s(-\infty).
\end{equation*}
\begin{proof}
The proof  is similar to the proof of Theorem $4.1$ in
\cite{Pej-Ski}. However, since here we are working on a proper
subspace $\lr$  of ${\bf c}(\R^d)$ we have to check that our all constructions   can be done  in this subspace. Hence for the convenience of the reader we will recall the main steps of the proof pointing out the differences with \cite{Pej-Ski}. 

First of all we will show that the calculation   of the index bundle  of $L$ can be reduced to the case of the operator associated to a family of dynamical systems of a special form $\bar{\aa}$  given by
\begin{equation}\label{const}
\bar{\aa}(n,\la)=(\bar{a}_n(\la))=
\begin{cases} a(\la, +\infty) & \hif\; n\geq 0,
\\ a(\la,- \infty)
&\hif\; n<0.
\end{cases}
\end{equation}

Denoting with  $\bar A_\la \in \mathcal{L}(\lc)$
the  operator associated to $\bar{\aa}_\la,$ we
will show  that the operator  $B_\la=A_\la -\bar A_\la$ is a compact
operator for any $\la.$ 

 Let $b_n(\la)=a_n(\la)-\bar a_n(\la),$ then $B_{\la}$ is given by 
 $B_{\la}\xx=(b_n(\la) x_n).$
 Define
\begin{equation}
\label{finite} \tilde B^m_\la\xx=
\begin{cases} b_n(\la) x_n & \hif \;|n|\leq m,  \\ 0 &\hif \;
|n|>m.
\end{cases}
\end{equation}
Clearly $\im \tilde B^m_\la$ is  finite dimensional. Let us show that \begin{equation}\label{wz} \sup_{\|\xx\|=1}
\|(B_{\la}-\tilde
B^m_{\la})\xx\|\xrightarrow[m\rightarrow\infty]{}0,
\end{equation}
for $\xx\in X$. Observe that
\begin{align}\label{wz1}
\begin{split}
\|(B_{\la}-\tilde B^m_{\la})\xx\|=\sum_{|n|>m}|b_n(\la)x_n|^2 \geq
\sum_{|n|>m+1}|b_n(\la)x_n|^2=\|(B_{\la}-\tilde
B^{m+1}_{\la})\xx\|,
\end{split}
\end{align}
for all $m\in \N$. Since $\lim\limits_{|n|\ra \infty} b_n(\la)=0,$
we infer that for all $ \varepsilon>0$ there exists $n_0>0$ such
that for all $|n|> n_0$ and $\|\xx\|=1$ one has
$|k_n(\la)x_n|<\varepsilon.$ Consequently, for all $\varepsilon>0$
there exists $n_0>0$ such that
\begin{equation}\label{wz2}
\sup_{ \|\xx\|=1}\|(B_{\la}-\tilde
B^{n_0}_{\la})\xx\|\leq\varepsilon.
\end{equation}
Now taking into account \eqref{wz1} and \eqref{wz2}, we deduce
that for all $\varepsilon>0$ there exists $n_0>0$ such that for
all $m\geq n_0$ one has
\begin{equation*}
\sup_{ \|\xx\|=1} \|(B_{\la}-\tilde
B^{m}_{\la})\xx\|\leq\varepsilon,
\end{equation*}
which proves \eqref{wz}.   
Being limit of operators of finite rank, each  operator
$B_\la$  is compact.

Let $\bar L_\la =S-\bar{A}_\la.$  Then $L_\la -\bar
L_\la= B_\la$  and hence  the family $L$ differs from the family
$\bar L$ by a family of compact operators. Therefore $L_\la $ is
Fredholm if and only if  $\bar L_{\la}$ is Fredholm. Moreover being  the index bundle  invariant under compact perturbation  we have that  $\Ind \bar
L = \Ind L.$  Therefore,  in order to prove the theorem  we can assume
without loss of generality that $\aa$ has already the special form
of \eqref{const}, which we will do from now on. 

Let us introduce  two scales  $\lcp^\pm_k$ of closed subspaces of $\lc$ defined respectively  by:
\begin{align*}
\lcp^+_k:=\{\xx \in \lc \mid x_n=0 \hfor n<k\},\;\;
\lcp^-_k:=\{\xx \in \lc \mid x_n =0 \hfor n>k\}.
\end{align*}
We put   $ X^+ =Y^+ =
\lcp_0^+$ and $X^- = \lcp^-_0, \ Y^- = \lcp^-_{-1}$ and consider  
the operator   $I\colon Y^- \oplus Y^+ \ra X$ defined by $I(\xx,\yy)=\xx+\yy,$ the operator $J\colon X \ra X^- \oplus X^+$ defined  by 
\begin{equation*}
J(\xx)(n)=\begin{cases} (x_0,x_0)& \hif\; n=0,\\(x_n,0) & \hif
\;n<0,\end{cases}
\end{equation*}
and two  operators  $L^{+/-} _\lambda \colon X^{+/-}\ra Y^{+/-}$
defined respectively by

\begin{align*}
(L_\la^+\xx)(n)&=
\begin{cases} x_{n+1} -a(\la,+ \infty)x_n
& \hif n\geq 0,
\\ 0
&\hif n<0,
\end{cases}
\hand
(L^-_{\la}\xx)(n)&=
\begin{cases}
0 & \hif n>-1,
\\  x_{n+1} - a(\la,- \infty)x_n
& \hif n\leq -1.
\end{cases}
\end{align*}
 
 With the above definitions we factorize $L_\la$ through  the following
commutative  diagram:
\begin{equation}\label{eq:homdiag}
\begin{split}
\xymatrix@1{ X^{-}\oplus X^{+}\;\ar[r]^-{L^-_\lambda \oplus
L^+_\lambda} & \;Y^{-} \oplus Y^{+}  \ar[d]^{I}
\\
X \ar[u]^{J} \ar[r]^-{L_\lambda} & X.}
\end{split}
\end{equation}

Our next step is to show that $L_\la^\pm \colon X^\pm \ra Y^\pm$ are
Fredholm and  compute their index bundles.

\begin{lemma}\label{ab-lemma}  Let  $a\in GL(d)$ be a hyperbolic matrix.  Then
the  operator $S-A\colon \emph{\lcp}_0^+\ra \emph{\lcp}_0^+$
defined by
\begin{equation*}
((S-A)\emph{\xx})(n)=
\begin{cases}
x_{n+1}-ax_{n} & \emph{\hif}\; n\geq 0, \\
0 &\emph{\hif}\; n<0,
\end{cases}
\end{equation*}
is surjective with $\Ker(S-A)=\{\emph{\xx}\in \emph{\lcp}_0^+\mid
x_{n+1}=a^nx_0 \text{ for all $n\geq 0$ and } x_0\in E^s(a)\}.$
\end{lemma}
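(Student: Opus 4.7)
The plan is to exploit the spectral decomposition of the hyperbolic matrix $a$. Let $\R^d = E^s(a)\oplus E^u(a)$ with associated projections $P^s, P^u$ and restrictions $a^s := a|_{E^s(a)}$, $a^u := a|_{E^u(a)}$. Hyperbolicity supplies constants $C\geq 1$ and $\rho\in(0,1)$ such that $|(a^s)^n|\leq C\rho^n$ and $|(a^u)^{-n}|\leq C\rho^n$ for every $n\geq 0$. This exponential decay of the forward iterates on $E^s(a)$ and of the backward iterates on $E^u(a)$ drives all the $\ell^2$-estimates below.

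For the kernel, observe that if $\xx\in\lcp_0^+$ satisfies $(S-A)\xx=\oo$, then the defining recurrence gives $x_{n+1}=ax_n$ for $n\geq 0$, so $x_n=a^n x_0$. Splitting $x_0=P^s x_0+P^u x_0$, the estimate
\[ |P^u x_0| \;\leq\; |(a^u)^{-n}|\cdot|a^n P^u x_0| \;\leq\; C\rho^n|a^n P^u x_0| \]
forces $|a^n P^u x_0|\geq C^{-1}\rho^{-n}|P^u x_0|$, which is incompatible with square-summability unless $P^u x_0 = 0$. Conversely, any $x_0\in E^s(a)$ produces a sequence $(a^n x_0)_{n\geq 0}$ whose norm decays like $\rho^n$ and therefore belongs to $\lcp_0^+$. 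This establishes the stated description of $\Ker(S-A)$.

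For surjectivity, given $\yy\in\lcp_0^+$ I set
\[ x_n := \sum_{k=0}^{n-1}(a^s)^{n-1-k}P^s y_k \;-\; \sum_{k=n}^{\infty}(a^u)^{-(k-n+1)}P^u y_k \hfor n\geq 0, \]
and $x_n := 0$ for $n<0$. A one-line telescoping verifies $x_{n+1}-ax_n = P^s y_n + P^u y_n = y_n$ for every $n\geq 0$, so the recurrence holds. Each of the two sums is a discrete convolution of $\yy$ with a matrix-valued kernel supported on $n\geq 1$ (the stable piece) or on $n\leq 0$ (the unstable piece), whose operator norms decay geometrically by the estimates recalled above and hence are $\ell^1(\Z)$. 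The discrete Young inequality $\|h*g\|_{\ell^2}\leq \|h\|_{\ell^1}\|g\|_{\ell^2}$ then simultaneously shows $\xx\in\lcp_0^+$ and produces a bounded right inverse of $S-A$.

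The whole argument reduces to the asymmetric choice of summation direction — forward (causal) on the stable component and backward (anticausal) on the unstable one — which is the only non-routine decision in the proof. The only point that requires a little care is the convolution estimate, which is where the spectral information beyond the recurrence is actually used; the kernel computation and the algebraic verification that the explicit formula solves the equation are pure bookkeeping.
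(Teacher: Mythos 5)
Your proof is correct and follows essentially the same route as the paper: your explicit right inverse is precisely the convolution with the kernel $g(n)=a^{n-1}(\mathbbm{1}_{Z^+}(n)\Id-P^u)$ that the paper imports from Abbondandolo--Majer, and the $\ell^1$-kernel plus Young's inequality argument is exactly how the paper transfers surjectivity from ${\bf c}_0^+$ to $\lcp_0^+$. The only difference is that you carry out in full the construction and the kernel characterization that the paper delegates to the citation and to the spectral radius theorem.
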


This lemma was proved in \cite[Lemma 2.1]{Ab-Ma2} for the operator
induced   in $\cc^+_0:=\{\xx \in \cc(\R^d) \mid x_i=0 \hfor i<0\}$
by constructing an explicit right inverse $R$ to the operator
$S-A\colon \cc_0^+\ra \cc_0^+$, via the convolution  with the
matrix  function $g(n)=  a^{n-1} (\mathbbm{1}_{Z^+}(n)\Id_{\R^d} -
P^u),$ where $P^u$ is the projector on the unstable subspace and
$Z^+=\{1,2,...\}$. They prove that function $g$ belongs to
$l^1(\Z,\R^{d\times d})$. But the convolution with a matrix
function in  $l^1(\Z,\R^{d\times d})$ sends $\lr$ into itself.
Hence the assertion of this lemma is also true for $S-A\colon
\lcp_0^+\ra \lcp_0^+$. For the assertion regarding the kernel is
enough to observe  that if $\xx=(x_n)\in \cc_0^+$ and
$x_{n+1}=a^nx_0$ for all $n\geq 0$ and $x_0\in E^s(a)$, then the
spectral radius theorem guarantees that $\xx\in \lcp_0^+$.\qed

By Lemma \ref{ab-lemma}
\begin{equation}\label{ker+}
\Ker L^+_{\la}=\{\xx\in X^+ \mid x_n = a(\la,+\infty)^n x_0
 \hand x_0\in E^s(\la,+\infty)\}.
\end{equation}
Hence the transformation  $\xx \mapsto x_0 $ defines an
isomorphism between $\Ker L^+ $ and $E^s(\la,+\infty),$ which is
finite dimensional. Being $\Coker L_\la =\{0\},$ $L^+_{\la}$ is
Fredholm with $\ind L^+_{\la}=\dim E^s(\la,+\infty).$ Clearly the
index bundle  $\Ind L^+ = [E^s(+\infty)].$ 

The remaining part of the proof is identical to that of Theorem $4.1$ in
\cite{Pej-Ski}.   Namely, we reduce the calculation of $\Ind L^-$ to Lemma \ref{ab-lemma}  by showing that $L^-$ is conjugated to a surjective  operator of the same form as $L^+ $  whose kernel bundle is isomorphic to 
$ E^u(\la,-\infty).$ 
Thus we  obtain that $\Ind\,L^+=
[E^s( +\infty)]$ and  $\Ind L^-= [ E^u( -\infty)].$  
Observing  that 
$I$ is an isomorphism and  and $J$ is a  monomorphic Fredholm operator of index $-d,$ from the commutativity of the diagram
 it follows that, for each $\la$, 
$L_{\la}=I(L^-_{\la}\oplus L^+_{\la})J$ is Fredholm of and
\begin{equation}\label{dim}
\begin{array}{l}
\ind(L_{\la})=
\dim  E^s(\la,+\infty)+\dim  E^u(\la,-\infty)-d=\dim
E^s(\la,+\infty)-\dim  E^s(\la,-\infty).
\end{array}
\end{equation}
Noe $(ii)$ follows from the logarithmic and  direct sum properties of the index bundle,  by  considering $I$ and $J$ as constant families 
with $\Ind\,I=0,\, \Ind\,J=-[\Theta(\R^d)].$ 

We obtain
\begin{equation*}
\Ind L = [E^u( -\infty)] +[E^s( +\infty)]-[\Theta(\R^d)]=[E^s(
+\infty)]-[E^s( -\infty)],
\end{equation*}
which proves $(ii).$
\end{proof}

\begin{remark} \label{ker} In the proof we have shown  that in the case  of systems of the special  form \eqref{const}
elements  of  $\Ker L_{\la}$ are sequences $(x_n)\in X$ such that
$x_0\in E^s(\la,+\infty) \cap E^u(\la,-\infty)$ and
$x_n=a(\la,+\infty)^{n}x_0,  \hfor  n \geq 0 \hand
x_n=a(\la,-\infty)^{n}x_0,  \hfor n \leq 0.$
\end{remark}

The obstruction  $w_1(E)$  to the triviality of a vector bundle
$E$ over $S^1$ defined in Section $2$  induces an 
isomorphism $w_1\colon \widetilde{KO}(S^1)
\ra \Z_2$ by putting
\begin{equation}\label{morewhit}
w_1([E] -[F]) = w_1(E) w_1(F).
\end{equation}  

From this and Theorem \ref{prop:ind} we obtain:
\begin{corollary}
\begin{equation}\label{w}
w_1(\Ind L)=w_1(E^s(+\infty) w_1(E^s(-\infty)).
\end{equation}\end {corollary}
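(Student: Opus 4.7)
The plan is to derive the formula as a direct consequence of Theorem \ref{prop:ind}(ii) together with the definition of $w_1$ on $\widetilde{KO}(S^1)$ given in \eqref{morewhit}. The only non-trivial points are to verify that $\Ind L$ really lies in the reduced group $\widetilde{KO}(S^1)$, so that the homomorphism can be applied, and to check that the formula in \eqref{morewhit} is well-defined.

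First, I would note that by the rank computation following Theorem \ref{prop:ind}, $\ind L_\la = \dim E^s(+\infty) - \dim E^s(-\infty)$, and by assumption $(A2)$ the matrices $a(\la,+\infty)$ and $a(\la,-\infty)$ have the same number of eigenvalues inside the unit disk, so these dimensions coincide. Hence $\Ind L$ belongs to $\widetilde{KO}(S^1)$ and we may apply $w_1$ to it.

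Next, I would verify that \eqref{morewhit} indeed defines a homomorphism $w_1\colon \widetilde{KO}(S^1)\to\Z_2$. The key observations are that $w_1$ is multiplicative under direct sums: given frames of $E$ and $F$, their concatenation is a frame of $E\oplus F$ and the corresponding matrix $C$ is block-diagonal, so $\sign\det C$ is the product of the signs for each summand; and that $w_1(\Theta)=1$ for any trivial bundle, since one can take the constant frame. Together with the fact from \cite[Theorem 3.8]{Hus} cited in the paper—two differences $[E]-[F]=[E']-[F']$ in $\widetilde{KO}(S^1)$ iff $E\oplus F'\oplus \Theta\cong E'\oplus F\oplus \Theta'$ for some trivial bundles—this yields well-definedness of $w_1$ on $\widetilde{KO}(S^1)$.

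Finally, combining Theorem \ref{prop:ind}(ii), which states $\Ind L = [E^s(+\infty)] - [E^s(-\infty)]$, with definition \eqref{morewhit}, we obtain immediately
\begin{equation*}
w_1(\Ind L) = w_1\bigl([E^s(+\infty)] - [E^s(-\infty)]\bigr) = w_1(E^s(+\infty))\, w_1(E^s(-\infty)),
\end{equation*}
which is the desired formula. Since the derivation is essentially a one-line application of Theorem \ref{prop:ind} and definition \eqref{morewhit}, there is no real obstacle beyond the bookkeeping above; the substantive work was already accomplished in the proof of Theorem \ref{prop:ind}.
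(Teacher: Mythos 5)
Your proposal is correct and follows exactly the paper's route: the corollary is stated there as an immediate consequence of Theorem \ref{prop:ind}(ii) together with the definition \eqref{morewhit} of $w_1$ on $\widetilde{KO}(S^1)$. The extra bookkeeping you supply (that $\Ind L$ lies in the reduced group by $(A2)$, and that \eqref{morewhit} is well defined via multiplicativity of $w_1$ under direct sums and triviality on $\Theta$) is sound and simply makes explicit what the paper leaves implicit.
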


\section{The continuity smoothness and the Fredholm property of the family  $G(\la,\xx)  = S\xx - F(\la,\xx)$ }
In this section we will study the differentiable properties of a
nonlinear operator $G$ induced by a discrete nonautonomous system
(\ref{main-system}) parametrized by a parameter space $S^1$ and
Fredholmness of an operator $D_xG$. We keep the notations and
assumptions from Section $2$. For any $\xx\in \lc$ define
\begin{align}\label{substitution}
\begin{split}
&F(\la,\xx)=(f_n(\la,x_n)),\;\;
F^{\infty}_{\pm}(\la,\xx)=(f^{\infty}_{\pm}(\la,x_n)),\\
&G(\la,\xx)=S\xx-F(\la,\xx),\;\;
G^{\infty}_{\pm}(\la,\xx)=S\xx-F^{\infty}_{\pm}(\la,\xx).
\end{split}
\end{align}

\begin{proposition}\label{proposition-estimation}
Under Assumptions $(A1)$--$(A4)$, $F(\la,\emph{$\xx$})$,
$F^{\infty}_{\pm}(\la,\emph{$\xx$})$, $G(\la,\emph{$\xx$})$ and
$G^{\infty}_{\pm}(\la,\emph{$\xx$})$ belong to \emph{$\lc$}, for
each $\la\in S^1$ and \emph{$\xx \in \lc$}.
\end{proposition}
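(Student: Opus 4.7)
The plan is to reduce all four assertions to a single pointwise estimate: for every $M>0$ there exists a constant $C(M)$ such that
\[
|f_n(\la,x)|\leq C(M)\,|x|\quad\text{for all }n\in\Z,\ \la\in S^1,\ |x|\leq M.
\]
Once this is in hand, for any $\xx\in\lc$ I would use the embedding $\lc\hookrightarrow{\bf c}(\R^d)$, which gives $\|\xx\|_\infty\leq\|\xx\|$, so every coordinate $x_n$ lies in $\bar B_d(0,\|\xx\|)$. Taking $M=\|\xx\|$ then yields
\[
\sum_{n\in\Z}|f_n(\la,x_n)|^2\leq C(\|\xx\|)^2\sum_{n\in\Z}|x_n|^2=C(\|\xx\|)^2\|\xx\|^2<\infty,
\]
proving $F(\la,\xx)\in\lc$.

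To produce $C(M)$ I would bound $\partial_x f_n$ uniformly on $S^1\times\bar B_d(0,M)$. First, $(A2)$ together with continuity of $a(\cdot,\pm\infty)$ on the compact circle yields a global bound $\sup_{n,\la}|a_n(\la)|\leq K$ for $a_n(\la)=\partial_x f_n(\la,0)$. Then $(A1)$ applied with $\varepsilon=1$ produces $\delta\in(0,M)$ controlling the oscillation of $\partial_x f_n$ uniformly in $n$ on $S^1\times\bar B_d(0,M)$. Joining $(\la,x)$ to $(\la,0)$ by the segment $\{(\la,tx)\mid t\in[0,1]\}$ and chaining the $(A1)$ estimate along at most $\lceil M/\delta\rceil+1$ subintervals of length less than $\delta$ gives
\[
\sup_{n,\la,|x|\leq M}|\partial_x f_n(\la,x)|\leq K+\lceil M/\delta\rceil+1=:C(M).
\]
The fundamental theorem of calculus in the $x$-variable, combined with $f_n(\la,0)=0$, then delivers the required bound $|f_n(\la,x)|\leq C(M)|x|$.

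The statements for $F^\infty_\pm(\la,\xx)$ follow by passing to the limit $n\to\pm\infty$ in $(A4)$: the inequality $|f_n(\la,x)|\leq C(M)|x|$ is uniform in $n$, so it is preserved in the pointwise limit, giving $|f^\infty_\pm(\la,x)|\leq C(M)|x|$ on $\bar B_d(0,M)$, and the same summation argument applies. Finally, since the shift $S\xx=(x_{n+1})$ is an isometry of $\lc$, both $G(\la,\xx)$ and $G^\infty_\pm(\la,\xx)$ lie in $\lc$ as differences of two elements of $\lc$. The step I expect to carry the real content is the chaining argument that combines the uniform-in-$n$ equicontinuity furnished by $(A1)$ with the pointwise boundedness of $\{a_n(\la)\}$ furnished by $(A2)$ into a single uniform derivative bound on $S^1\times\bar B_d(0,M)$; everything else is a routine mean-value estimate followed by an $\ell^2$ summation.
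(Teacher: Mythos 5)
Your proposal is correct and follows essentially the same route as the paper: the uniform derivative bound on $S^1\times \bar B_d(0,M)$ obtained by chaining the $(A1)$ oscillation estimate from the $(A2)$ bound at $x=0$ is exactly the paper's Lemma \ref{boundedness}, and the subsequent mean-value estimate, $\ell^2$ summation, passage to the limit for $f^\infty_\pm$, and reduction of $G$, $G^\infty_\pm$ to the shift are all as in the paper's argument.
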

\begin{proof} First of all  we will need the following lemma, which will be used repeatedly
in what follows. Working  on any  coordinate chart of $S^1$ we will denote with $\la$  also the coordinate of the point $\la\in S^1$.
\begin{lemma}\label{boundedness}
Assumptions $(A1)$--$(A2)$ imply that
\begin{equation*}
\sup_{(n,\la,y)\in \Z\times S^1\times \bar B_d(0,M)}
\left|\frac{\partial f_n(\la ,y)}{\partial x} \right|<\infty
\emph{ \;\;and \;\;}\sup_{(n,\mu,y)\in \Z\times S^1\times \bar
B_d(0,M)} \left|\frac{\partial f_n(\mu ,y)}{\partial \la}
\right|<\infty
\end{equation*}
for any $M>0$.
\end{lemma}
\begin{proof}
Let us observe that from  Assumption (A2) it follows easily that
\begin{equation}\label{Assumption-A2}
C_0:=\sup_{(n,\la)\in \Z\times S^1} \left|\frac{\partial
f_n(\la,0)} {\partial x}\right|<\infty.
\end{equation}
Fix $M>0$ and $\varepsilon>0$. Let $\delta>0$ be as in Assumption
$(A1)$. Take $(n,\la,y)\in \Z\times S^1\times \bar B_d(0,M)$. Then
there exists $n_0>0$ such that $n_0\leq M/\delta<n_0+1$.
Furthermore, there exist $0<k\leq n_0+1$ and points
$y_0=0,y_1,...,y_{k-1},y_k=y\in \bar B_d(0,M)$ such that
$|y_i-y_{i+1}|<\delta$, for $i=0,...,k-1$. Thus
\begin{align*}
&\left|\frac{\partial f_n(\la,y)}{\partial x}\right|\leq
\left|\frac{\partial f_n(\la,0)}{\partial
x}\right|+\left|\frac{\partial f_n(\la,y_1)}{\partial
x}-\frac{\partial f_n(\la,0)}{\partial x}\right|+
\left|\frac{\partial f_n(\la,y_2)}{\partial x}-\frac{\partial
f_n(\la,y_1)}{\partial x}\right|+\ldots+\\
&\left|\frac{\partial f_n(\la,y_{k-1})}{\partial x}-\frac{\partial
f_n(\la,y_{k-2})}{\partial x}\right|+\left|\frac{\partial
f_n(\la,y)}{\partial x}-\frac{\partial f_n(\la,y_{k-1})}{\partial
x}\right|\leq C_0+k\varepsilon\leq C_0+(n_0+1)\varepsilon,
\end{align*}
where $C_0$ is as in \eqref{Assumption-A2}. Observe that
$\displaystyle\frac{\partial f_n(\mu,0)}{\partial \la}=0$, for all
$\mu\in S^1$ and $n\in \Z$. It follows from the fact that
$f_n(\mu,0)=0$, for all $\mu\in S^1$ and $n\in\Z$. Consequently,
by the same reasoning as above, one can conclude the second part
of the assertion of the lemma.
\end{proof}
Now fix $\xx\in \lc$ and $\la\in S^1$. Let $M:=\sup\limits_{n\in
\Z}|\xx(n)|$. Then Lemma \ref{boundedness} implies that
\begin{equation*}
C:=\sup_{(n,y)\in \Z\times \bar B_d(0,M)}\left|\frac{\partial
f_n(\la,y)}{\partial x}\right|<\infty.
\end{equation*}
Hence, using the mean value theorem, we get
\begin{equation}\label{estimate}
|f_n(\la,y)|=|f_n(\la,y)-f_n(\la,0)|\leq\left(\sup_{s\in[0,1]}\left|\frac{\partial
f_n(\la,sy)}{\partial x}\right|\right)|y|\leq C|y|.
\end{equation}
Consequently,
\begin{align*}
\|F(\la,\xx)\|&=\|(f_n(\la,\xx(n)))\|\leq C\|(\xx(n))\|=C\|\xx\|,
\end{align*}
which implies that $F(\la,\xx)$ and hence also $G(\la,\xx)$ belong to $\lc.$

On the other hand, $|f_n(\la,y)|\xrightarrow[n\rightarrow\pm\infty]{}
|f^{\infty}_{\pm}(\la,y)|.$ Thus,  after passing to the limit in
(\ref{estimate}) as $n\ra \pm\infty$, we get
$ |f^{\infty}_{\pm}(\la,y)|\leq C|y|,$ for all $y\in \bar B_d(0,M)$.
From which it follows that  $\|F^{\infty}_{\pm}(\la,\xx)\| \leq C\|\xx\|.$
  Therefore, $F^{\infty}_{\pm}(\la,\xx)$, and $G^{\infty}_{\pm}(\la,\xx)$ belong to $\lc.$
\end{proof}

Using once again Lemma \ref{boundedness}, we  define
two families of linear bounded operators $T\colon S^1\times \lc\ra
\mathcal{L}(\lc)$ and $\tilde{T}\colon S^1\times \lc\ra
\mathcal{L}(\R,\lc)$ by
\begin{align}\label{frechet}
T(\la,\xx)\yy:=\left(\frac{\partial f_n(\la,x_n)}{\partial
x}y_n\right) \text{ and
\;\;}\tilde{T}(\la,\xx)z:=\left(\frac{\partial
f_n(\la,x_n)}{\partial \la}z\right)
\end{align}
for $\xx=(x_n), \yy=(y_n)\in \lc$, $\la\in S^1$ and $z\in\R$.
\begin{proposition}\label{diff-G}
The map  $F\colon S^1 \times \emph{$\lc$}\ra \emph{$\lc$}$ defined in \eqref{substitution}  is $C^1$.
 Moreover, $D_xF(\la,\emph{\xx})=T(\la,\emph{\xx})$
and $D_{\la}F(\la,\emph{\xx})=\tilde{T}(\la,\emph{\xx})$.
\end{proposition}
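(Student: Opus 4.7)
The plan is to verify in order: (a) $T(\la,\xx)$ and $\tilde T(\la,\xx)$ are bounded linear operators; (b) the partial Fréchet derivatives $D_xF(\la,\xx)$ and $D_\la F(\la,\xx)$ exist and equal $T(\la,\xx)$ and $\tilde T(\la,\xx)$ respectively; (c) the maps $(\la,\xx)\mapsto T(\la,\xx)$ and $(\la,\xx)\mapsto \tilde T(\la,\xx)$ are continuous in the operator-norm topology. Once (b) and (c) are established, the standard criterion for $C^1$ maps between Banach spaces gives the conclusion. Throughout I will exploit the continuous embedding $\lc\hookrightarrow {\bf c}(\R^d)$, i.e., the elementary inequality $\|\yy\|_\infty\leq \|\yy\|$, which lets one pass between $\ell^2$-smallness and pointwise uniform smallness.

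For (a), fix $(\la,\xx)$ and set $M=\|\xx\|_\infty<\infty$. Lemma~\ref{boundedness} produces a constant $C=C(\la,M)$ with $\sup_n|\partial_x f_n(\la,\xx(n))|\leq C$ and $\sup_n|\partial_\la f_n(\la,\xx(n))|\leq C$, whence $\|T(\la,\xx)\yy\|\leq C\|\yy\|$ and $\|\tilde T(\la,\xx)z\|\leq C|z|$ by direct summation. For (b), write
\begin{equation*}
f_n(\la,x_n+h_n)-f_n(\la,x_n)-\frac{\partial f_n(\la,x_n)}{\partial x}h_n=\int_0^1\!\left[\frac{\partial f_n(\la,x_n+th_n)}{\partial x}-\frac{\partial f_n(\la,x_n)}{\partial x}\right]h_n\,dt.
\end{equation*}
Pick $M:=\|\xx\|_\infty+1$ and, given $\varepsilon>0$, let $\delta\in(0,1)$ be furnished by $(A1)$. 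For $\|\hh\|<\delta$ we have $|h_n|\leq\|\hh\|_\infty\leq\|\hh\|<\delta$, so the bracketed matrix has norm less than $\varepsilon$ uniformly in $n$ and in $t\in[0,1]$; squaring and summing yields $\|F(\la,\xx+\hh)-F(\la,\xx)-T(\la,\xx)\hh\|\leq\varepsilon\|\hh\|$. The argument for $D_\la F(\la,\xx)=\tilde T(\la,\xx)$ is identical, using the $\partial_\la$ half of $(A1)$.

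For (c), suppose $d(\la_1,\la_2)+\|\xx_1-\xx_2\|<\eta$; then $|\xx_1(n)-\xx_2(n)|\leq\|\xx_1-\xx_2\|_\infty<\eta$ for every $n$, so the pairs $(\la_i,\xx_i(n))$ lie in $S^1\times \bar B_d(0,M)$ (with $M=\max\|\xx_i\|_\infty+1$) at product distance less than $2\eta$. Choosing $\eta$ small enough that $2\eta<\delta$ for the $\delta$ associated to $M$ and $\varepsilon$ in $(A1)$, one gets
\begin{equation*}
\sup_{n\in\Z}\left|\frac{\partial f_n(\la_1,\xx_1(n))}{\partial x}-\frac{\partial f_n(\la_2,\xx_2(n))}{\partial x}\right|<\varepsilon,
\end{equation*}
hence $\|(T(\la_1,\xx_1)-T(\la_2,\xx_2))\yy\|\leq\varepsilon\|\yy\|$ by the same summation trick used in Lemma~\ref{lemma-continuity-L}. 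The analogous estimate for $\tilde T$ uses the second supremum in $(A1)$.

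The only real obstacle is the delicate interplay between the $\lc$-norm on $\xx$ and the pointwise data at each $n$; this is precisely the role played by the inequality $\|\cdot\|_\infty\leq\|\cdot\|$ and by the uniformity over $n$ built into $(A1)$. Given these, the three pieces combine routinely: continuity of both partial derivatives in $(\la,\xx)$ implies $F$ is Fréchet differentiable with total derivative $DF(\la,\xx)(\mu,\hh)=\tilde T(\la,\xx)\mu+T(\la,\xx)\hh$, and this total derivative is itself continuous, so $F\in C^1$ as claimed.
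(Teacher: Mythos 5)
Your treatment of the $x$-derivative is essentially identical to the paper's: the integral mean-value representation of the remainder, the uniform-in-$n$ smallness supplied by $(A1)$ via the embedding $\|\cdot\|_\infty\le\|\cdot\|$, and the summation against $|h_n|^2$ (resp.\ $|y_n|^2$, $|z_n|^2$) to convert the pointwise bound into an $\lc$-bound. That part is correct and complete.

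The $\la$-derivative part, however, contains a step that fails as written, and it is precisely the step where the paper is silent (``the same reasoning as above''). You claim $\|\tilde T(\la,\xx)z\|\le C|z|$ ``by direct summation'' from $\sup_n|\partial_\la f_n(\la,x_n)|\le C$. But
\begin{equation*}
\|\tilde T(\la,\xx)z\|^2=|z|^2\sum_{n\in\Z}\left|\frac{\partial f_n(\la,x_n)}{\partial\la}\right|^2,
\end{equation*}
and a bound that is merely uniform in $n$ gives a divergent sum: unlike the $T$ estimate, there is no square-summable factor $|y_n|^2$ to carry the convergence, since $z$ is a scalar independent of $n$. The same divergence afflicts your continuity estimate for $\tilde T$ (summing $\varepsilon^2|z|^2$ over $\Z$) and the remainder estimate for $D_\la F$ (summing $\varepsilon^2|\mu|^2$ over $\Z$). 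What is needed, and what you should supply, is square-summability in $n$ of $\partial_\la f_n(\la,x_n)$ and of the corresponding differences; the available input is $\partial_\la f_n(\mu,0)=0$ for all $n,\mu$ (noted in the proof of Lemma~\ref{boundedness}) together with $(A1)$, from which one must extract a bound of the type $|\partial_\la f_n(\la,x_n)|\le \omega(|x_n|)$ with $\omega$ controlled well enough (e.g.\ linearly) near $0$ that $\xx\in\lc$ forces convergence of the sum. Without an argument of this kind the boundedness of $\tilde T(\la,\xx)$, its continuity, and the identity $D_\la F=\tilde T$ are all unproved; ``identical to the $T$ case'' is not enough, because the mechanism that makes the $T$ case work is absent.
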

\begin{proof}
Observe that it suffices to prove that $D_xF$ and $D_{\la}F$ exist
and are continuous on $S^1\times \lc$. Firstly we prove that
$D_xF$ exists and $D_xF(\la,\xx)=T(\la,\xx)$. Fix $\xx\in \lc$ and
$\la\in S^1$. Then
\begin{align}\label{eqR}
\begin{split}
&R(\xx,{\bf h};\la):=\|F(\la,\xx+{\bf h})-F(\la,\xx)-T(\la,\xx){\bf
h}\|=\\&\left(\sum_{n\in
\Z}\left|f_n(\la,x_n+h_n)-f_n(\la,x_n)-\frac{\partial
f_n(\la,x_n)}{\partial x}h_n\right|^2\right)^{1/2},
\end{split}
\end{align}
where ${\bf h}\in \lc$ and $\la\in S^1$. We are to show that
$R(\xx,{\bf h};\la)\|{\bf h}\|^{-1}\ra 0$ as $\|{\bf h}\|\ra 0.$
Let
\begin{equation*}
c_n({\bf h};\la):=\sup_{s\in[0,1]}\left|\frac{\partial
f_n(\la,x_n+sh_n)}{\partial x}-\frac{\partial
f_n(\la,x_n)}{\partial x }\right|,
\end{equation*}
for $n\in\Z$. Then Assumption $(A1)$ implies that
\begin{equation*}
\sup_{n\in\Z}c_n({\bf h};\la)\xrightarrow[\|{\bf h}\|\rightarrow
0]{}0.
\end{equation*}
Then
\begin{align*}
&\left|f_n(\la,x_n+h_n)-f_n(\la,x_n)-\frac{\partial
f_n(\la,x_n)}{\partial x}h_n\right|=\\ &\left|\int_0^1
\frac{\partial f_n(\la,x_n+sh_n)}{\partial x}h_n ds-\frac{\partial
f_n(\la,x_n)}{\partial x}h_n\right|\leq \\ &|h_n|\int_0^1
\left|\frac{\partial f_n(\la,x_n+sh_n)}{\partial x
}-\frac{\partial f_n(\la,x_n)}{\partial x}\right|ds\leq \\ &
|h_n|\int_0^1c_n({\bf h};\la) ds\leq |h_n|\sup_{n\in\Z}c_n({\bf
h};\la).
\end{align*}
Hence, taking into account \eqref{eqR}, we infer that
\begin{equation}
0\leq R(\xx,{\bf h};\la)\leq\|{\bf h}\|\sup_{n\in\Z}c_n({\bf
h};\la),
\end{equation}
which implies that $R(\xx,{\bf
h};\la)\|{\bf h}\|^{-1}\ra 0$ as $\|{\bf h}\|\ra 0$. Now we will
show that $T\colon S^1\times \lc\ra \mathcal{L}(\lc)$ is
continuous. To this end, observe that
\begin{align}\label{cont-T}
\begin{split}
&\|(T(\la,\xx)-T(\mu,\yy))\zz\|^2=\sum_{n\in\Z}\left|\left(\frac{\partial
f_n(\la,\xx(n))}{\partial x}-\frac{\partial
f_n(\mu,\yy(n))}{\partial x}\right)\zz(n)\right|^2\\&
\leq\sum_{n\in\Z}\left|\frac{\partial f_n(\la,\xx(n))}{\partial
x}-\frac{\partial f_n(\mu,\yy(n))}{\partial x}\right|^2
|\zz(n)|^2.
\end{split}
\end{align}
Assumption $(A1)$ implies that for any $M>0$ and $\varepsilon>0$
there exists $\delta>0 $ such  for  all $(\la_1,x_1),(\la_2,x_2)
\in S^1\times \R^d$ with
$d\big((\la_1,x_1),(\la_2,x_2)\big)<\delta$, one has
\begin{equation*}
\sup_{n\in\Z}\left|\frac{
\partial f_n(\la_1,x_1)}{\partial x} -
\frac{\partial f_n(\la_2,x_2)}{\partial x}\right|<\varepsilon.
\end{equation*}
Fix $\xx\in \lc$ and $\varepsilon>0$ and take $\delta>0$ as above
(for $M:=2\|\xx\|$). Let
$d((\la,0),(\mu,0))<\min\{\delta/4,\|\xx\|\}$ and
$\|\xx-\yy\|<\min\{\delta/4,\|\xx\|\}$. Then for any $k\in\Z$ one
has $|\xx(k)-\yy(k)|\leq \|\xx-\yy\|$ and
\begin{align}\label{cont-T2}
\left|\frac{
\partial f_k(\la,\xx(k))}{\partial x} -
\frac{\partial f_k(\mu,\yy(k))}{\partial x}\right|\leq
\sup_{n\in\Z}\left|\frac{
\partial f_n(\la,\xx(k))}{\partial x} -
\frac{\partial f_n(\mu,\yy(k))} {\partial x}\right|<\varepsilon.
\end{align}
Thus, taking into account \eqref{cont-T} and \eqref{cont-T2}, we
infer that
\begin{align*}
\|(T(\la,\xx)-T(\mu,\yy))\zz\|^2\leq\sum_{n\in \Z}\varepsilon^2
|\zz(n)|^2=\varepsilon^2 \|\zz\|^2
\end{align*}
provided $d((\la,0),(\mu,0))<\min\{\delta/4,\|\xx\|\}$ and
$\|\xx-\yy\|<\min\{\delta/4,\|\xx\|\}$. Consequently, we deduce
that $T$ is continuous (with respect to the norm topology of
$\mathcal{L}(\lc)$).

Finally, it is not hard to see that the same reasoning as above
implies that $D_{\la}F(\la,\xx)=\tilde{T}(\la,\xx)$ and that
$D_{\la}F$ is continuous on $S^1\times \lc$. This completes the
proof.
\end{proof}

Now we will show that, for each $\la \in S^1,$  $G_\la$ is
Fredholm map of index $0$. Namely, $D_xG(\la,\xx)$ is a Fredholm
operator of index $0$ for all $(\la,\xx)$. For this purpose we
need to prove the following lemma.
\begin{lemma}\label{asymptotic-hyperbolic}
Under Assumptions $(A1)$--$(A2)$, for any $\emph{\xx}=(x_n)\in
\emph{\cc}(\R^d)$, one has
\begin{equation*}
\frac{\partial f_n(\la,x_n)}{\partial x}
\xrightarrow[n\rightarrow\pm\infty]{} a(\lambda,\pm\infty) \text{
$($uniformly with respect to $\la\in S^1)$.}
\end{equation*}
\end{lemma}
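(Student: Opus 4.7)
The plan is a direct triangle-inequality argument that decouples the ``$x$-dependence'' handled by $(A1)$ from the ``$n$-dependence'' handled by $(A2)$.

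First, fix $\xx=(x_n)\in\cc(\R^d)$ and observe that $M:=\sup_{n\in\Z}|x_n|<\infty$ since $x_n\to 0$ as $|n|\to\infty$. For each $\la\in S^1$ write
\begin{equation*}
\left|\frac{\partial f_n(\la,x_n)}{\partial x}-a(\la,\pm\infty)\right|
\leq
\left|\frac{\partial f_n(\la,x_n)}{\partial x}-\frac{\partial f_n(\la,0)}{\partial x}\right|
+\left|\frac{\partial f_n(\la,0)}{\partial x}-a(\la,\pm\infty)\right|.
\end{equation*}
The second term tends to $0$ as $n\to\pm\infty$ uniformly in $\la$, by Assumption $(A2)$ applied to the sequence $a_n(\la)=\partial f_n(\la,0)/\partial x$.

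For the first term I would apply Assumption $(A1)$ with the chosen $M$ and a prescribed $\varepsilon>0$: there exists $\delta=\delta(M,\varepsilon)>0$, independent of $\la$ and $n$, such that any two points of $S^1\times\bar B_d(0,M)$ within distance $\delta$ yield a $\sup_n$-difference of $\partial f_n/\partial x$ smaller than $\varepsilon$. Since $x_n\to 0$, there is $n_0$ such that $|x_n|<\delta$ for $|n|\geq n_0$; then $d((\la,x_n),(\la,0))=|x_n|<\delta$, and $(A1)$ gives
\begin{equation*}
\left|\frac{\partial f_n(\la,x_n)}{\partial x}-\frac{\partial f_n(\la,0)}{\partial x}\right|
\leq\sup_{m\in\Z}\left|\frac{\partial f_m(\la,x_n)}{\partial x}-\frac{\partial f_m(\la,0)}{\partial x}\right|<\varepsilon,
\end{equation*}
uniformly in $\la$. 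Combining these two estimates with an $\varepsilon/2$-choice finishes the proof.

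There is no real obstacle here; the only point to verify carefully is that the $\delta$ produced by $(A1)$ does not depend on $\la$, which is precisely the content of the $\sup_n$ together with the uniform quantifier ``for all $(\la_1,x_1),(\la_2,x_2)$'' in $(A1)$. This is what upgrades the pointwise limit to the uniform limit in $\la$ claimed by the lemma.
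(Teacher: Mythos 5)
Your argument is correct and is essentially identical to the paper's proof: both split the difference via the triangle inequality through $\partial f_n(\la,0)/\partial x$, use $(A1)$ (with $M$ a bound for $\|\xx\|_\infty$ and the key point that $\delta$ is uniform in $n$ and $\la$) together with $x_n\to 0$ to control the first term, and invoke $(A2)$ for the second.
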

\begin{proof}
Fix $\xx\in\cc(\RN)$ and $\varepsilon>0$. Then Assumption $(A1)$
implies that there exists $\delta>0$ (for $M:=2\|\xx\|_{\infty}$)
such that
\begin{equation*}
\forall_{k\in\Z}\;\forall_{|y|\leq \delta}\; \forall_{\la\in S^1}
\left|\frac{
\partial f_k(\la ,y)}{\partial x} -
\frac{\partial f_k(\la,0)}{\partial x}\right|<\varepsilon.
\end{equation*}
Since $x_n \xrightarrow[n\rightarrow\pm\infty]{} 0$, it follows
that there exists $n_0>0$ such that $|x_n|\leq \delta$ for
$|n|\geq n_0$. Hence
\begin{equation*}
\forall_{|k|\geq n_0}\; \forall_{\la\in S^1} \left|\frac{
\partial f_k(\la,x_k)} {\partial x} -
\frac{\partial f_k(\la,0)}{\partial x}\right|<\varepsilon.
\end{equation*}
Now the assertion of lemma follows from Assumption $(A2)$.
\end{proof}

\begin{theorem}\label{fredholm-index-0}
Under Assumptions $(A1)$--$(A2)$, the map $G$ is $C^1.$ Moreover,
for any $\la \in S^1$ the map $G_\la \colon \emph{$\lc$}\ra
\emph{$\lc$}$ is  a Fredholm map of index $0$.
\end{theorem}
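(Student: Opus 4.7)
The plan is to read off both parts of the theorem as direct consequences of what has already been assembled in Sections 3 and 4.

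For the $C^1$ statement, I would first note that the shift $S\colon\lc\to\lc$ is bounded linear, hence smooth, with derivative $S$ itself. Since $G(\la,\xx) = S\xx - F(\la,\xx)$, and Proposition \ref{diff-G} gives that $F$ is $C^1$ with $D_xF(\la,\xx)=T(\la,\xx)$ and $D_\la F(\la,\xx)=\tilde T(\la,\xx)$, it follows immediately that $G$ is $C^1$ and
\[
D_xG(\la,\xx) = S - T(\la,\xx), \qquad D_\la G(\la,\xx) = -\tilde T(\la,\xx).
\]

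For the Fredholm statement, the idea is to reduce to Theorem \ref{prop:ind} applied to the linear system associated to the sequence of matrices $b_n := \partial f_n(\la,x_n)/\partial x$ for fixed $(\la,\xx)$. The operator $T(\la,\xx)$ is precisely the substitution operator $A$ attached to this sequence, so $D_xG(\la,\xx) = S - A$ with $A = T(\la,\xx)$. The key observation is that $\xx\in\lc\subset{\bf c}(\R^d)$, so Lemma \ref{asymptotic-hyperbolic} applies and yields $b_n \to a(\la,\pm\infty)$ as $n\to\pm\infty$, where the limits are hyperbolic by $(A2)$. Hence the sequence $(b_n)_{n\in\Z}$ satisfies conditions $(a)$ and $(b)$ of Section 3 (viewed as a family over a one-point parameter space, or simply by specializing the proof of Theorem \ref{prop:ind} to a single value of the parameter).

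Applying Theorem \ref{prop:ind}$(i)$ then gives that $S - T(\la,\xx)$ is Fredholm, and $(ii)$ combined with the rank homomorphism yields
\[
\ind\bigl(D_xG(\la,\xx)\bigr) = \dim E^s(\la,+\infty) - \dim E^s(\la,-\infty).
\]
By the second clause of $(A2)$, the two asymptotic hyperbolic matrices $a(\la,+\infty)$ and $a(\la,-\infty)$ have the same number of eigenvalues inside the unit disk, so the two dimensions coincide and the index is $0$. There is no substantial obstacle: the only point worth a line of verification is that Lemma \ref{asymptotic-hyperbolic} requires $\xx\in{\bf c}(\R^d)$, which is automatic since $\lc\subset{\bf c}(\R^d)$.
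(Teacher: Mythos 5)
Your proposal is correct and follows essentially the same route as the paper: $C^1$-smoothness of $G$ is read off from Proposition \ref{diff-G} together with the linearity of $S$, and the Fredholm property of $D_xG(\la,\xx)=S-T(\la,\xx)$ is obtained by combining Lemma \ref{asymptotic-hyperbolic} with Theorem \ref{prop:ind}, the index vanishing because $(A2)$ forces $\dim E^s(\la,+\infty)=\dim E^s(\la,-\infty)$. The only (harmless) cosmetic difference is your explicit remark that the fixed sequence $(\partial f_n(\la,x_n)/\partial x)_n$ is to be viewed as a family over a one-point parameter space, which the paper leaves implicit.
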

\begin{proof}
From Proposition \ref{diff-G} it follows directly that the map
$G(\la,\xx):=S\xx-F(\la,\xx)$ is $C^1$. Fix $\xx\in \lc$ and
$\la\in S^1$. Let $a_n(\la,x_n):=\displaystyle\frac{\partial
f_n(\la,x_n)}{\partial x}.$ From Proposition \ref{diff-G} it
follows that $D_xG(\la,\xx)$ is the operator $L_{\la}\colon \lc\ra
\lc$ defined by
\begin{equation}\label{linear}
L_{\la}\yy=(y_{n+1}-a_n(\la,x_n)y_n).
\end{equation}
Assumption $(A2)$ and Lemma \ref{asymptotic-hyperbolic} imply that
$\aa=(a_n(\la,x_n))$ is asymptotically hyperbolic. Consequently by
Theorem \ref{prop:ind}, the operator $L_\la$ is Fredholm with
index given by \eqref{dim}. Thus $\ind L_\la=0,$ since by $(A2)$
the stable subspaces at $\pm \infty$ have the same dimension.
\end{proof}

\begin{lemma}\label{equicontinuity-properness}
For any bounded sequence $(\emph{$\xx_n$})\subset \emph{$\lc$}$
the family of functions $\{G(\cdot,\emph{$\xx_n$})\colon S^1\ra
\emph{$\lc$}\}_{n\in \Z}$ is equicontinuous.
\end{lemma}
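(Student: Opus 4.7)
The plan is as follows. Since $G(\la,\xx)=S\xx-F(\la,\xx)$ and the shift $S$ is independent of $\la$, one has $G(\la,\xx_n)-G(\mu,\xx_n)=F(\mu,\xx_n)-F(\la,\xx_n)$, so it suffices to establish equicontinuity of the family $\{\la\mapsto F(\la,\xx_n)\}_n$ uniformly in $n$. Set $M:=\sup_n\|\xx_n\|<\infty$; since $\|\cdot\|_\infty\leq\|\cdot\|$ on $\lc$, every component $\xx_n(k)$ lies in the closed ball $\bar B_d(0,M)$.

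Exploiting $f_k(\la,0)=0=f_k(\mu,0)$, I would apply the fundamental theorem of calculus to $s\mapsto f_k(\la,sy)-f_k(\mu,sy)$ on $[0,1]$ to obtain
\begin{equation*}
f_k(\la,y)-f_k(\mu,y)=\int_0^1\left[\frac{\partial f_k(\la,sy)}{\partial x}-\frac{\partial f_k(\mu,sy)}{\partial x}\right]y\,ds.
\end{equation*}
Given $\varepsilon>0$, Assumption $(A1)$ applied with the constants $M$ and $\varepsilon$ yields $\delta>0$ such that whenever $d_{S^1}(\la,\mu)<\delta$ one has $d\bigl((\la,sy),(\mu,sy)\bigr)=d_{S^1}(\la,\mu)<\delta$ for every $s\in[0,1]$ and $|y|\leq M$, hence
\begin{equation*}
\sup_{k\in\Z}\;\sup_{s\in[0,1],|y|\leq M}\left|\frac{\partial f_k(\la,sy)}{\partial x}-\frac{\partial f_k(\mu,sy)}{\partial x}\right|<\varepsilon.
\end{equation*}
Substituting gives $|f_k(\la,y)-f_k(\mu,y)|\leq\varepsilon|y|$ uniformly in $k$ for $|y|\leq M$.

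Applying this pointwise estimate with $y=\xx_n(k)$ and summing over $k$,
\begin{equation*}
\|F(\la,\xx_n)-F(\mu,\xx_n)\|^2=\sum_{k\in\Z}\bigl|f_k(\la,\xx_n(k))-f_k(\mu,\xx_n(k))\bigr|^2\leq\varepsilon^2\sum_{k\in\Z}|\xx_n(k)|^2=\varepsilon^2\|\xx_n\|^2\leq\varepsilon^2M^2,
\end{equation*}
so $\|G(\la,\xx_n)-G(\mu,\xx_n)\|\leq\varepsilon M$ whenever $d_{S^1}(\la,\mu)<\delta$, uniformly in $n$, which is exactly equicontinuity.

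The only real obstacle is obtaining the summability: a crude bound $|f_k(\la,y)-f_k(\mu,y)|<\varepsilon$ would not sum over $\Z$. The factor $|y|$ in the estimate, extracted by using $f_k(\la,0)=0$ to write the difference as an integral of $\partial_x$-terms multiplied by $y$, is precisely what converts the uniform smallness from $(A1)$ into $\lc$-smallness via the bounded sequence $\xx_n$. This is the same trick used in Lemma \ref{lemma-continuity-L} (and in the continuity argument for $T$ in Proposition \ref{diff-G}), now applied to the $\la$-differences rather than the $\xx$-differences.
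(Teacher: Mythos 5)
Your proof is correct, but it takes a different route from the paper's. The paper invokes Lemma \ref{boundedness} and Proposition \ref{diff-G} to get a uniform bound $L_M$ on $\|D_{\la}G(\mu,\xx)\|$ over $S^1\times\bar B(\oo,M)$ and then integrates $D_\la G$ along the arc joining $\la_1$ to $\la_2$, obtaining a Lipschitz estimate $\|G(\la_2,\xx_n)-G(\la_1,\xx_n)\|\le L_M\,\mathrm{dist}(\la_2,\la_1)$ uniform in $n$. You instead bypass the $\la$-derivative entirely: you integrate in the $x$-variable, using $f_k(\la,0)=f_k(\mu,0)=0$ to write $f_k(\la,y)-f_k(\mu,y)$ as an integral of $\bigl[\partial_x f_k(\la,sy)-\partial_x f_k(\mu,sy)\bigr]y$, and then apply $(A1)$ with the two points $(\la,sy)$, $(\mu,sy)$ (whose product distance is just $d_{S^1}(\la,\mu)$) to get $|f_k(\la,y)-f_k(\mu,y)|\le\varepsilon|y|$ uniformly in $k$. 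The extracted factor $|y|$ is exactly what makes the estimate square-summable, giving $\|F(\la,\xx_n)-F(\mu,\xx_n)\|\le\varepsilon M$. Both arguments are sound; the paper's is shorter given the machinery it has already set up, and yields a uniform Lipschitz modulus, while yours is more self-contained, needing only $(A1)$ for $\partial f/\partial x$ and the vanishing of $f_k(\cdot,0)$, and in particular it avoids having to justify that $D_\la F(\mu,\xx)=\tilde T(\mu,\xx)$ is a bounded operator into $\lc$ uniformly on $S^1\times\bar B(\oo,M)$ --- a point which itself requires an argument of precisely the kind you give.
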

\begin{proof}
First observe that there exists $M>0$ such that $|\xx_n(k)|\leq
\|\xx_n\|\leq M$ for $n\in \N$ and $k\in\Z$.  From Lemmas
\ref{boundedness} and \ref{diff-G} it follows that
\begin{equation*}
L_M:=\sup_{(\mu,\xx)\in S^1\times\bar
B(0,M)}\|D_{\la}G(\mu,\xx)\|<\infty.
\end{equation*}
Integrating $D_{\la}G(\mu,\xx)$ over an arc of the circle joining $\la_1$ with $\la_2$  we get
\begin{align*}
&\|G(\la_2,\xx_n)-G(\la_1,\xx_n)\| \leq
L_M\,\text{dist}\,(\la_2,\la_1)
\end{align*}
which implies the equicontinuity of the family
$\left\{G(\cdot,\xx_n)\colon S^1\ra \lc\right\}_{n\in \Z}$.
\end{proof}

\section{Properness}
We are going to discuss   a properness criterion for the map $G$ adapting to our framework the approach used in \cite{Se-St}.

 \begin{definition}[\cite{Se-St}]\label{vanish}
We say that a sequence $(\xx_n)$ in $\lc$ vanishes uniformly at
infinity if, for all $\varepsilon>0$, there exist $n_0\in \N$ and
$m_0\in\N$ such that $|\xx_n(m)|\leq\varepsilon $ for all $n\geq
n_0$ and for all $|m|\geq m_0$
\end{definition}

\begin{lemma}\label{convergence-weak}
Let $\emph{\xx}\in \emph{$\lc$}$ and let $(\emph{\xx}_n)\subset
\emph{$\lc$}$. Then
$\emph{\xx}_n\xrightharpoonup[n\rightarrow\infty]{}\emph{\xx}$
weakly in $\emph{$\lc$}$ if and only if $(\emph{\xx}_n)$ is norm
bounded in $\emph{$\lc$}$ and
$p_k(\emph{\xx}_n)\xrightarrow[n\rightarrow\infty]{}
p_k(\emph{\xx})$, for all $k\in\Z$, where $p_k\colon
\emph{$\lc$}\ra \R$ are the canonical projections.
\end{lemma}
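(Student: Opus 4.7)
The plan is to prove the two directions separately, using the Hilbert space structure of $\lc$ and the density of finitely supported sequences.

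For the forward implication, I will assume $\xx_n \xrightharpoonup[n\to\infty]{} \xx$ weakly. Norm boundedness of $(\xx_n)$ follows immediately from the Banach--Steinhaus (uniform boundedness) principle applied to the family of bounded linear functionals $\{\yy \mapsto \langle \yy, \xx_n\rangle\}$, which is pointwise bounded on $\lc$. For the coordinate convergence, I observe that for each $k \in \Z$ and each standard basis vector $e_j \in \R^d$, the functional $\xx \mapsto \langle \xx(k), e_j\rangle$ is represented by a (finitely supported) element of $\lc$, hence weak convergence yields that each scalar component of $p_k(\xx_n)$ converges to the corresponding component of $p_k(\xx)$.

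For the reverse implication, suppose $(\xx_n)$ is norm bounded, say $\|\xx_n\| \leq C$ for all $n$, and $p_k(\xx_n) \to p_k(\xx)$ for every $k \in \Z$. Fix $\yy \in \lc$ and $\varepsilon > 0$. The idea is the standard truncation argument: choose $N$ large enough that $\sum_{|k| > N}|\yy(k)|^2 < \varepsilon^2$, and split
\begin{equation*}
\langle \xx_n - \xx, \yy\rangle = \sum_{|k|\leq N} \langle p_k(\xx_n) - p_k(\xx), \yy(k)\rangle + \sum_{|k|>N}\langle p_k(\xx_n) - p_k(\xx), \yy(k)\rangle.
\end{equation*}
The first (finite) sum tends to $0$ as $n \to \infty$ by the coordinatewise hypothesis, while the second is bounded above by $(\|\xx_n\| + \|\xx\|)\,\varepsilon \leq (C + \|\xx\|)\varepsilon$ via Cauchy--Schwarz applied to the tail. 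Letting $n \to \infty$ and then $\varepsilon \to 0$ yields $\langle \xx_n, \yy\rangle \to \langle \xx, \yy\rangle$, i.e., weak convergence.

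Since $\lc$ is a Hilbert space (self-dual via Riesz representation), testing against arbitrary $\yy \in \lc$ is equivalent to testing against arbitrary continuous linear functionals, so this suffices. No genuine obstacle arises; the argument is the standard characterization of weak convergence in $\ell^2$-type spaces, the only point requiring attention being the choice of the tail bound $N$ that depends on $\yy$ but not on $n$, which is precisely what uniform norm boundedness of $(\xx_n)$ delivers.
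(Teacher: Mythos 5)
Your proof is correct. The paper does not actually prove this lemma---it simply cites \cite[Theorem 14.4]{bach-Nar}---and your argument (Banach--Steinhaus for norm boundedness, testing against the finitely supported coordinate functionals for the forward direction, and the standard truncation plus Cauchy--Schwarz tail estimate for the converse) is exactly the standard proof that the citation stands in for.
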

\begin{proof}
This is proved in \cite[Theorem 14.4]{bach-Nar}.
\end{proof}

\begin{lemma}\label{convergence-weak-II}
Let $(h_n)\subset \Z$ be a sequence such that
$\lim\limits_{n\ra\infty}|h_n|=\infty$ and let $\emph{\xx}\in
\emph{$\lc$}$. Define the sequence $(\widetilde{\emph{\xx}}_n)$ by
$\widetilde{\emph{\xx}}_n(m):=\emph{\xx}(m+h_n)$ for $m\in\Z$,
then
$\widetilde{\emph{\xx}}_n\xrightharpoonup[n\rightarrow\infty]{}
\emph{\oo}$ weakly.
\end{lemma}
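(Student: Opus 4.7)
The plan is to invoke the characterization of weak convergence in $\lc$ given by Lemma \ref{convergence-weak}: a sequence $(\widetilde{\xx}_n)$ converges weakly to $\oo$ if and only if it is norm bounded and each coordinate projection $p_k(\widetilde{\xx}_n)$ converges to $0$. So the proof reduces to verifying these two conditions for the translated sequence.

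For norm boundedness, I would perform a straightforward change of summation index: since translation is an isometry of $\lc$,
\begin{equation*}
\|\widetilde{\xx}_n\|^2 \;=\; \sum_{m\in\Z}|\xx(m+h_n)|^2 \;=\; \sum_{j\in\Z}|\xx(j)|^2 \;=\; \|\xx\|^2,
\end{equation*}
so in fact the sequence is norm bounded (by the constant $\|\xx\|$), which is even stronger than what is needed.

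For the pointwise condition, fix $k\in\Z$. Then $p_k(\widetilde{\xx}_n) = \xx(k+h_n)$. Because $\xx\in\lc$, the series $\sum_{j\in\Z}|\xx(j)|^2$ converges, so its general term satisfies $\xx(j)\to 0$ as $|j|\to\infty$; in particular $\xx \in {\bf c}(\R^d)$. Since $|h_n|\to\infty$, we have $|k+h_n|\to\infty$, hence $\xx(k+h_n)\to 0=p_k(\oo)$. I do not anticipate any real obstacle here: the main point is just to recognize that the decay of $\xx$ at $\pm\infty$, combined with the divergence of $(h_n)$ to infinity in absolute value, kills each individual coordinate. Applying Lemma \ref{convergence-weak} then concludes $\widetilde{\xx}_n \rightharpoonup \oo$ weakly in $\lc$.
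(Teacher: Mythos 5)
Your proof is correct and follows exactly the route the paper intends: the paper's own proof simply declares the lemma ``straightforward from Lemma \ref{convergence-weak}'', and you have filled in precisely that argument (translation invariance of the norm plus the decay of $\xx$ at infinity forcing each coordinate projection to zero). No issues.
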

\begin{proof}
It is a straightforward from Lemma \ref{convergence-weak}.
\end{proof}

\begin{lemma}\label{convergence}
Let \emph{$(\xx_n)$} be a bounded sequence in $\emph{$\lc$}$ and
let \emph{$\xx\in \lc$}. The following statements are equivalent:
\begin{enumerate}
\item[$(i)$] \emph{$\|\xx_n-\xx\|_{\infty}\xrightarrow[n\rightarrow\infty]{} 0$}.
\item[$(ii)$] \emph{$\xx_n\xrightharpoonup[n\rightarrow\infty]{}\xx$} in $\emph{$\lc$}$ and
\emph{$(\xx_n)$} vanishes uniformly at infinity.
\end{enumerate}
\end{lemma}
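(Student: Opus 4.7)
The argument splits naturally into the two implications, and both hinge on Lemma \ref{convergence-weak}, which characterises weak convergence in $\lc$ as norm-boundedness together with coordinatewise convergence. Note that the hypothesis already gives $\lc$-boundedness of $(\xx_n)$, so this part of the weak-convergence criterion is free.

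For $(i)\Rightarrow(ii)$, uniform convergence $\|\xx_n-\xx\|_\infty\to 0$ trivially yields pointwise convergence $p_k(\xx_n)\to p_k(\xx)$ at each index $k$, so combined with the assumed norm boundedness in $\lc$ Lemma \ref{convergence-weak} gives weak convergence. For the uniform vanishing at infinity, fix $\varepsilon>0$; since $\xx\in\lc\subset\mathbf{c}(\R^d)$ we can pick $m_0$ with $|x(m)|\leq\varepsilon/2$ for $|m|\geq m_0$, and by $(i)$ we can pick $n_0$ with $\|\xx_n-\xx\|_\infty\leq\varepsilon/2$ for $n\geq n_0$. A triangle inequality then gives $|x_n(m)|\leq\varepsilon$ for $n\geq n_0$ and $|m|\geq m_0$.

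For the converse $(ii)\Rightarrow(i)$, the strategy is to split the index set $\Z$ into a finite core $|m|<M$ (where coordinatewise convergence from weak convergence does the work) and a tail $|m|\geq M$ (where uniform vanishing at infinity of both $(\xx_n)$ and $\xx$ does the work). Concretely, given $\varepsilon>0$: uniform vanishing yields $n_0,m_0$ with $|x_n(m)|\leq\varepsilon/2$ for $n\geq n_0$, $|m|\geq m_0$; the fact $\xx\in\lc$ yields $m_1$ with $|x(m)|\leq\varepsilon/2$ for $|m|\geq m_1$. Set $M=\max\{m_0,m_1\}$, so that $|x_n(m)-x(m)|\leq\varepsilon$ on the tail for all $n\geq n_0$. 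On the finite core $\{m:|m|<M\}$, apply Lemma \ref{convergence-weak} to obtain, for each of the finitely many indices $m$, an integer past which $|x_n(m)-x(m)|\leq\varepsilon$; take the maximum to produce a single $n_1\geq n_0$ working uniformly for $|m|<M$. Combining the two regions gives $\|\xx_n-\xx\|_\infty\leq\varepsilon$ for $n\geq n_1$.

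There is no real obstacle; the only delicate point is to remember that weak convergence alone does \emph{not} control the tail (one can have weakly null sequences that fail to vanish uniformly, as in Lemma \ref{convergence-weak-II}), which is precisely why the extra hypothesis of uniform vanishing at infinity in $(ii)$ is indispensable for recovering sup-norm convergence.
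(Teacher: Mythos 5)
Your proposal is correct and follows essentially the same route as the paper: the implication $(i)\Rightarrow(ii)$ is immediate, and for $(ii)\Rightarrow(i)$ the paper likewise splits $\Z$ into the tail $|m|\geq m_0$, handled by uniform vanishing of $(\xx_n)$ together with the decay of $\xx$, and the finite core $|m|<m_0$, handled by the coordinatewise convergence supplied by Lemma \ref{convergence-weak}. The only difference is cosmetic: you spell out $(i)\Rightarrow(ii)$, which the paper dismisses as obvious.
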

\begin{proof}
First, observe that the implication $(i)\Longrightarrow (ii)$ is
obvious. We are to show that $(ii)$ implies $(i)$. Fix
$\varepsilon>0$. Then there exist $m_0\in \N$ and $n_0\in\N$ such
that $|\xx_n(m)|<\varepsilon \text{ and } |\xx(m)|<\varepsilon$,
for all $|m|\geq m_0$ and $n\geq n_0$. Hence
$|\xx_n(m)-\xx(m)|<2\varepsilon$, for all $|m|\geq m_0$ and $n\geq
n_0$. Lemma \ref{convergence-weak} implies that there exists
$n_1\in \N$ such that $|\xx_n(m)-\xx(m)|<\varepsilon$, for all
$n\geq n_1$ and $|m|<m_0$. Thus we deduce that
$\|\xx_n-\xx\|_{\infty}\leq 2\varepsilon$, for all $n\geq
\max\{n_0,n_1\}$, which implies that
$\|\xx_n-\xx\|_{\infty}\xrightarrow[n\rightarrow\infty]{} 0.$
\end{proof}

The following lemma will play a crucial role in the proof the
properness.

\begin{lemma}[Shifted subsequence lemma]\label{shift}
Let \emph{$(\xx_n)\subset \lc$} be a bounded sequence. Then at
least one of the following properties must hold.
\begin{enumerate}
\item[$(i)$] \emph{$(\xx_n)$} vanishes uniformly at infinity.
\item[$(ii)$] There is a sequence $(l_k)\subset\Z$ with $\lim\limits_{k\ra
\infty}l_k=\infty$ and a subsequence \emph{$(\xx_{n_k})$} of
\emph{$(\xx_n)$} such that a sequence $(\widetilde{\emph{\xx}}_k)$
defined by \emph{$\widetilde{\xx}_k(m):=\xx_{n_k}(m+l_k)$}, for
$m\in \Z$, converges weakly in $\emph{$\lc$}$ to
\emph{$\widetilde{\xx}\neq \oo$}.
\item[$(iii)$] There is a sequence $(l_k)\subset\Z$ with $\lim\limits_{k\ra
\infty}l_k=-\infty$ and a subsequence \emph{$(\xx_{n_k})$} of
\emph{$(\xx_n)$} such that a sequence \emph{$(\widetilde{\xx}_k)$}
defined by \emph{$\widetilde{\xx}_k(m):=\xx_{n_k}(m+l_k)$}, for
$m\in \Z$, converges weakly to \emph{$\widetilde{\xx}\neq \oo$}.
\end{enumerate}
\end{lemma}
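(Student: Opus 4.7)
The plan is to argue by contrapositive: assuming $(\xx_n)$ does \emph{not} vanish uniformly at infinity, I shall produce the data of either alternative (ii) or alternative (iii). The negation of Definition \ref{vanish} yields a constant $\varepsilon_0>0$ such that for every pair $(n_0,m_0)\in\N\times\N$ there exist an index $n\geq n_0$ and an integer $m$ with $|m|\geq m_0$ satisfying $|\xx_n(m)|>\varepsilon_0$. Iterating this choice inductively I obtain a strictly increasing sequence $(n_k)\subset\N$ and integers $(m_k)\subset\Z$ with $|m_k|\to\infty$ and $|\xx_{n_k}(m_k)|>\varepsilon_0$ for every $k$.

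Since $|m_k|\to\infty$, at least one sign is attained infinitely often, so after passing to a subsequence I may assume either $m_k\to+\infty$ or $m_k\to-\infty$. Setting $l_k:=m_k$ and $\widetilde{\xx}_k(m):=\xx_{n_k}(m+l_k)$, a straightforward re-indexing shows $\|\widetilde{\xx}_k\|=\|\xx_{n_k}\|$, so $(\widetilde{\xx}_k)$ is norm bounded in $\lc$. Because $\lc$ is a Hilbert space, any bounded sequence admits a weakly convergent subsequence; passing to such a subsequence (still denoted $\widetilde{\xx}_k$) I obtain $\widetilde{\xx}_k\rightharpoonup\widetilde{\xx}$ for some $\widetilde{\xx}\in\lc$.

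It remains to verify $\widetilde{\xx}\neq\oo$. By Lemma \ref{convergence-weak}, weak convergence in $\lc$ implies coordinatewise convergence, so in particular $\widetilde{\xx}_k(0)\to\widetilde{\xx}(0)$ in $\R^d$. But $\widetilde{\xx}_k(0)=\xx_{n_k}(l_k)=\xx_{n_k}(m_k)$ has Euclidean norm exceeding $\varepsilon_0$, whence $|\widetilde{\xx}(0)|\geq\varepsilon_0>0$ and therefore $\widetilde{\xx}\neq\oo$. The case $m_k\to+\infty$ delivers alternative (ii) and the case $m_k\to-\infty$ delivers alternative (iii).

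I do not expect a genuine obstacle: the whole argument reduces to reflexivity of $\lc$ together with the coordinate characterization of weak convergence already established in Lemma \ref{convergence-weak}. The only delicate design choice is the centering $l_k=m_k$, which places the ``large'' coordinate exactly at the origin so that the weak limit inherits a nonzero value there; without this alignment Lemma \ref{convergence-weak} could not detect nontriviality of $\widetilde{\xx}$.
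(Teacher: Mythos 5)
Your proposal is correct and follows essentially the same route as the paper's proof: negate the uniform-vanishing condition to extract indices $n_k$ and positions $m_k$ with $|m_k|\to\infty$ and $|\xx_{n_k}(m_k)|\geq\varepsilon$, shift by $l_k=m_k$ so the large coordinate sits at $0$, invoke weak compactness of bounded sets in $\lc$, and use the coordinatewise characterization of weak convergence (Lemma \ref{convergence-weak}) to conclude $\widetilde{\xx}(0)\neq 0$. No gaps.
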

\begin{proof}
Assume that $(\xx_n)$ does not satisfy $(i)$. Then there exists
$\varepsilon>0$ such that for all $k\in\N$ there exists $m_k\in\Z$
with $|m_k|\geq k$ and there exists $n_k\geq k$ such that
$|\xx_{n_k}(m_k)|\geq\varepsilon.$ By passing to a subsequence, if
necessary, we may suppose that $(m_k)$ diverges either to $\infty$
or to $-\infty.$ Suppose that
$\lim\limits_{k\ra\infty}m_k=\infty$. Let $l_k\in\Z$ be defined by
$l_k:=m_k$. Let $\widetilde{\xx}_k:=(\xx_{n_k}(n+l_k))$. It is
clear that $\|\widetilde{\xx}_k\|=\|\xx_{n_k}\|.$ Since
$(\tilde{\xx}_k)$ is bounded and since  bounded subsets are weakly
compact, by passing to a subsequence if needed, we can assume that
$(\widetilde{\xx}_{k})$ converges weakly in $\lc$ to some element
$\widetilde{\xx}$. We will show that $\tilde{\xx}\neq \oo$.
Observe that $\varepsilon \leq|\widetilde{\xx}_{k}(0)|\leq K$.
Hence $\lim\limits_{k\ra\infty}
\widetilde{\xx}_{k}(0)=\widetilde{\xx}(0)\neq 0$, since
$\widetilde{\xx}_{k}\rightharpoonup \widetilde{\xx}$ in $\lc$ (see
Lemma \ref{convergence-weak}). The same reasoning shows that
$(iii)$ holds if $\lim\limits_{k\ra\infty}l_k=-\infty.$
\end{proof}

In the remaining part of this section we will study the properties
of the maps $F_\la(\xx) = F(\la,\xx)$ and  $G_\la
(\xx)=S\xx-F_\la(\xx)$ for a fixed value of parameter $\la\in
S^1$. We will consider our assumptions $(A1)$--$(A4)$ to hold for
the constant family $\ff(\la,\xx) = \ff(\xx)$  and drop $\la $
everywhere from the notations. For example, the derivative of $G$
with respect to the second variable will be denoted by $DG(\xx)$
instead of $D_xG(\la,\xx)$).

\begin{lemma}\label{weakly-continuous}
$F\colon \emph{$\lc$}\ra \emph{$\lc$}$,
$F^{\infty}_{\pm}\colon\emph{$\lc$}\ra \emph{$\lc$}$, $G
\colon\emph{$\lc$}\ra \emph{$\lc$}$ and $G^{\infty}_{\pm}
\colon\emph{$\lc$}\ra \emph{$\lc$}$ are weakly continuous.
\end{lemma}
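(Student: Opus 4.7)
The plan is to reduce weak continuity to the characterization of weak convergence in $\lc$ given by Lemma \ref{convergence-weak}, which says $\xx_n\rightharpoonup\xx$ iff $(\xx_n)$ is norm bounded and $p_k(\xx_n)\to p_k(\xx)$ for every $k\in \Z$. Since the shift $S\colon\lc\to\lc$ is a bounded linear isometry, it is automatically weakly continuous, so it suffices to establish weak continuity of the substitution operators $F$ and $F^{\infty}_{\pm}$; weak continuity of $G=S-F$ and $G^{\infty}_{\pm}=S-F^{\infty}_{\pm}$ then follows.

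Fix $\xx_n\rightharpoonup\xx$ in $\lc$. The Banach--Steinhaus theorem produces a uniform bound $K:=\sup_n\|\xx_n\|<\infty$, and since the $\lc$ norm dominates the sup norm we also have $\sup_{n,k}|p_k(\xx_n)|\le K$. To verify that $(F(\xx_n))$ is norm bounded in $\lc$, I would rerun the estimate from the proof of Proposition \ref{proposition-estimation} with $M:=K$: Lemma \ref{boundedness} provides a constant $C=C(K)$ such that $|f_k(y)|\le C|y|$ for every $k\in\Z$ and every $|y|\le K$, and summing in $\ell^2$ yields $\|F(\xx_n)\|\le C\|\xx_n\|\le CK$ uniformly in $n$.

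For coordinatewise convergence, Lemma \ref{convergence-weak} gives $p_k(\xx_n)\to p_k(\xx)$ in $\R^d$ for every fixed $k$, and continuity of $f_k$ yields
\[
p_k(F(\xx_n))=f_k(p_k(\xx_n))\longrightarrow f_k(p_k(\xx))=p_k(F(\xx)).
\]
Combining norm boundedness with componentwise convergence and applying Lemma \ref{convergence-weak} in the reverse direction gives $F(\xx_n)\rightharpoonup F(\xx)$ weakly in $\lc$. The identical argument handles $F^{\infty}_{\pm}$: the limit maps $f^{\infty}_{\pm}$ are continuous as uniform-on-bounded-sets limits of the continuous maps $f_n$ by $(A4)$, and the inequality $\|F^{\infty}_{\pm}(\yy)\|\le C\|\yy\|$ was already recorded in Proposition \ref{proposition-estimation}.

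There is essentially no substantive obstacle here beyond bookkeeping. The only point requiring care is that the constant $C$ in the nonlinear estimate must be chosen uniformly in $n$; this is achieved by taking $M=K$ where $K$ is the uniform sup-norm bound furnished by the weakly convergent sequence. Everything else reduces to toggling between the two equivalent descriptions of weak convergence in $\lc$ and invoking pointwise continuity of the functions $f_k$ and $f^{\infty}_{\pm}$.
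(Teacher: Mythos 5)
Your argument is correct and follows essentially the same route as the paper's proof: both reduce weak continuity to the characterization in Lemma \ref{convergence-weak}, obtain the uniform norm bound $\|F(\xx_n)\|\le C\|\xx_n\|$ from Lemma \ref{boundedness} as in Proposition \ref{proposition-estimation}, and get coordinatewise convergence from the continuity of $f_k$ and of $f^{\infty}_{\pm}$ (the latter as a uniform-on-bounded-sets limit). The only cosmetic difference is that you make explicit the weak continuity of the shift $S$, which the paper leaves implicit.
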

\begin{proof}
Fix $\xx\in \lc$. Let $\xx_k
\xrightharpoonup[k\rightarrow\infty]{}\xx$. We will show that
$F^{\infty}_{\pm}(\xx_k)\xrightharpoonup[k\rightarrow\infty]{}
F^{\infty}_{\pm}(\xx)$. To this end, by Lemma
\ref{convergence-weak} it suffices to show that
$(F^{\infty}_{\pm}(\xx_k))$ is norm bounded in $\lc$ and
$p_n(F^{\infty}_{\pm}(\xx_k))\xrightarrow[k\rightarrow\infty]{}
p_n(F^{\infty}_{\pm}(\xx))$, for all $n\in\Z$, where $p_n\colon
\lc\ra \R$ are the canonical projections. First observe that there
exists $M>0$ such that $\|\xx_k\|<M$ for all $k\in \N$ and hence
$|\xx_k(n)|<M$ for $k\in \N$ and $n\in\Z$. From Lemma
\ref{boundedness} we infer that
\begin{equation*}
C:=\sup_{(n,y)\in \Z\times \bar B_d(0,M)}\left|
Df_n(y)\right|<\infty.
\end{equation*}
Thus, reasoning as in the proof of Proposition
\ref{proposition-estimation}, we get
$\|F^{\infty}_{\pm}(\xx_k)\|\leq C\|\xx_k\|<CM,$ for all $k\in\N$.
On the other hand, since $f_n \xrightarrow[n\rightarrow\pm\infty]{} f^{\infty}_{\pm},$
uniformly on bounded subsets of $\R^d$, it follows that the map
$f^{\infty}_{\pm}\colon \R^d\ra\R^d$ is continuous.

 Since $\xx_k(n)\xrightarrow[k\rightarrow\infty]{}\xx(n)$, we deduce that
\begin{equation*}
p_n(F^{\infty}_{\pm}(\xx_k))=f^{\infty}_{\pm}(\xx_k(n))\xrightarrow[k\rightarrow\infty]{}f^{\infty}_{\pm}(\xx(n))=
p_n(F^{\infty}_{\pm}(\xx)),
\end{equation*}
which completes the proof that $F^{\infty}_{\pm}$ is weakly
continuous. The same proof shows that  $F$
is also weakly continuous which,  on its turn implies that both $G$ and $G^{\infty}_{\pm}$ are weakly continuous.  This completes the proof.
\end{proof}

\begin{lemma}\label{properness}
For the Fredholm map $G= S-F$ the following statements are
equivalent:
\begin{enumerate}
\item[$(a)$] The restricted map $G_{|D}$ is proper for each closed and
bounded subset $D$ of \emph{$\lc$}.
\item[$(b)$] If \emph{$(\xx_n)$} is a bounded sequence in
\emph{$\lc$} such that $(G(\emph{$\xx_n$}))$ is convergent in
\emph{$\lc$}, then \emph{$(\xx_n)$} has a convergent subsequence
in ${\bf c}(\R^d)$.
\end{enumerate}
\end{lemma}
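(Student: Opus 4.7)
The plan is as follows. The direction $(a)\Rightarrow (b)$ is immediate: if $(\xx_n)$ is bounded in $\lc,$ say $\|\xx_n\|\leq R,$ and $(G(\xx_n))$ converges in $\lc,$ then $\{G(\xx_n)\mid n\in\N\}\cup\{\lim G(\xx_n)\}$ is a compact subset of $\lc,$ and properness of $G|_{\bar B_\lc(\oo,R)}$ yields an $\lc$-convergent subsequence; since $\|\xx\|_\infty\leq\|\xx\|_\lc,$ the same subsequence converges in ${\bf c}(\R^d).$

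For $(b)\Rightarrow (a),$ fix a closed bounded $D\subset\lc$ and a compact $K\subset\lc,$ and let $(\xx_n)\subset D\cap G^{-1}(K).$ Passing to subsequences I may assume $G(\xx_n)\to\yy$ in $\lc;$ by $(b)$ I may further arrange $\xx_n\to\xx$ in ${\bf c}(\R^d);$ and, since $\lc$ is Hilbert and $(\xx_n)$ is bounded, one more extraction gives $\xx_n\rightharpoonup\xx'$ weakly in $\lc.$ Lemma \ref{convergence-weak} identifies the weak limit coordinate-wise, and coordinate-wise convergence already holds by sup-norm convergence, so $\xx'=\xx\in\lc.$ Weak continuity of $G$ (Lemma \ref{weakly-continuous}) then forces $\yy=G(\xx).$ To finish, I must promote sup-norm convergence to $\lc$-norm convergence; $\xx\in D$ will then follow from the $\lc$-closedness of $D.$ By Lemma \ref{convergence} the sequence $(\xx_n)$ vanishes uniformly at infinity, and splitting $\|\xx_n-\xx\|_\lc^2$ at a cutoff $M$ reduces the problem to a uniform-in-$n$ tail estimate $\sum_{|k|>M}|\xx_n(k)|^2<\varepsilon$ for $M$ large (the tail of $\xx\in\lc$ being automatically small).

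The heart of the argument is this tail estimate, where the structure $G=S-F$ and asymptotic hyperbolicity enter. Setting $a_k:=Df_k(0)$ and rewriting $\xx_n(k+1)=f_k(\xx_n(k))+G(\xx_n)(k)$ as
\begin{equation*}
\xx_n(k+1)-a_k\xx_n(k)=G(\xx_n)(k)+r_n(k),\qquad r_n(k):=f_k(\xx_n(k))-a_k\xx_n(k),
\end{equation*}
Assumption $(A1)$ together with uniform vanishing of $(\xx_n)$ at infinity yields $|r_n(k)|\leq\varepsilon|\xx_n(k)|$ for $|k|\geq M_0,$ with $\varepsilon$ as small as desired by taking $M_0$ large. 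Assumption $(A2)$ ensures $a_k\to a(\pm\infty)$ with $a(\pm\infty)$ hyperbolic, so Lemma \ref{ab-lemma} and its negative-tail analog provide a norm-bounded right inverse for the linear operator $\xx\mapsto(k\mapsto\xx(k+1)-a(\pm\infty)\xx(k))$ on the tail spaces $\lcp_{M_0}^{\pm},$ modulo the boundary contribution $\xx_n(\pm M_0)$ (itself arbitrarily small uniformly in $n$ by uniform vanishing). Inverting this tail operator and absorbing the $r_n$ term into the left-hand side (choosing $\varepsilon$ below the reciprocal of the right-inverse norm) bounds $\|\xx_n\|_{\lc,\,|k|>M_0}$ by a constant multiple of $\|G(\xx_n)\|_{\lc,\,|k|>M_0}+\varepsilon,$ and the $\lc$-convergence of $(G(\xx_n))$ supplies equi-small tails for the first term. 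The main technical obstacle is exactly this absorption: $M_0$ must be chosen simultaneously large enough to shrink the nonlinear remainder $r_n,$ to make $|a_k-a(\pm\infty)|$ small, and to make the boundary values $|\xx_n(\pm M_0)|$ small, all uniformly in $n.$
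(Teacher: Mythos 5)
Your proof of $(a)\Rightarrow(b)$ and the first half of $(b)\Rightarrow(a)$ (extracting a weak limit, using $(b)$ to get $\|\xx_n-\xx\|_\infty\to 0$, and identifying $\yy=G(\xx)$ via weak continuity) coincide with the paper's argument. The two proofs part ways at the decisive step of upgrading sup-norm convergence to $\lc$-convergence. The paper does this abstractly: it proves the Taylor-type estimate $\|G(\xx_n)-G(\xx)-DG(\xx)(\xx_n-\xx)\|\to 0$ (which requires only $(A1)$ and sup-norm convergence), deduces $\|DG(\xx)(\xx_n-\xx)\|\to 0$, and then invokes the Fredholm property of $DG(\xx)$: writing $B\circ DG(\xx)=I+K$ with $K$ compact, the weakly null sequence $\xx_n-\xx$ satisfies $\|K(\xx_n-\xx)\|\to 0$, whence $\|\xx_n-\xx\|\to 0$. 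You instead re-derive a quantitative dichotomy estimate on the tails: equi-smallness of $\sum_{|k|>M}|\xx_n(k)|^2$ obtained by inverting the asymptotic autonomous operator on the shifted spaces $\lcp^{\pm}_{M}$ and absorbing both the nonlinear remainder $r_n$ and the error $a_k-a(\pm\infty)$. Your route is sound: the right inverse of Lemma \ref{ab-lemma} together with the explicit kernel description (sequences $a^{k}v$, $v\in E^s(a)$, controlled by their initial value) yields the a priori bound $\|\xx\|_{\lc,\,k\geq M}\leq C\bigl(\|h\|+|\xx(M)|\bigr)$ with $C$ independent of $M$, and the three smallness requirements you list can be met simultaneously by first fixing $M_0$ for absorption and then letting the cutoff grow. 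The trade-off: your argument is longer and repeats hyperbolicity analysis that the paper has already packaged into Theorem \ref{fredholm-index-0}, but it is more self-contained and quantitatively explicit about why no mass escapes to infinity, whereas the paper's version works verbatim for any $C^1$-Fredholm map admitting the sup-norm Taylor estimate.
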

\begin{proof}
A map $G$ is proper on closed bounded subsets if and only if any
bounded sequence $(\xx_n)$ such that  $G(\xx_n) $ is convergent
has a subsequence convergent to some point of the set. Hence, that
$(a)$ implies $(b)$ follows plainly from the continuity of the
embedding $\lc\hookrightarrow {\bf c}(\RN).$ In order to show that
$(b)$ implies $(a), $ let $(\xx_n)$ be a bounded sequence such
that
\begin{equation}\label{F-I}
\|G(\xx_n)-\yy\|\xrightarrow[n\rightarrow\infty]{}0,
\end{equation}
where $\yy\in \lc$. Since the ball  $\bar B(\oo,C)$ in $\lc$ is
weakly-compact we can assume that $\xx_n\rightharpoonup \xx$ in
$\lc$. By  $(b)$,  passing to a subsequence if necessary, we can
assume  that $\|\xx_n-\xx\|_{\infty}\ra 0$ as $n\ra \infty$.
Since, by Lemma \ref{weakly-continuous}, $G$ is weakly continuous,
we have $G(\xx_n)\xrightharpoonup[n\rightarrow\infty]{} G(\xx)$,
and consequently $\yy=G(\xx).$

\begin{claim}\label{propernessdiff}
For the above sequence  one has
\begin{equation}\label{GDG}
\|G(\emph{$\xx_n$})-G(\emph{$\xx$})-D
G(\emph{$\xx$})(\emph{$\xx_n$}-\emph{$\xx$})\|\xrightarrow[n\rightarrow\infty]{}0.
\end{equation}
\end{claim}
\begin{proof}
The assertion \eqref{GDG}  is equivalent to \begin{equation}\label{F-II}
\|F(\xx_n)-F(\xx)-DF(\xx)(\xx_n-\xx)\|\xrightarrow[n\rightarrow\infty]{}0.
\end{equation}

Fix $\varepsilon>0$. Then there exists
$\delta>0$ such that if $\|\tilde{\xx}-\xx\|_{\infty}<\delta$,
then
\begin{equation}\label{sup-seq}
\sup_{k\in\Z}\left|Df_k(\tilde{\xx}(k))-Df_k(\xx(k))\right|<\varepsilon
\text{ (see Assumption (A1))}.
\end{equation}
Let  $n_0>0$ be such that $\|\xx_n-\xx\|_{\infty}<\delta$,
for all $n\geq n_0$. Fix $n\geq n_0 $ and $k\in\Z$. Then
\begin{align*}
&f_k(\xx_n(k))-f_k(\xx(k))-Df_k(\xx(k))(\xx_n(k)-\xx(k))=\\
&\int_0^1
Df_k\big(\xx_n(k)-s[\xx_n(k)-\xx(k)]\big)\big(\xx_n(k)-\xx(k)\big)ds-\int_0^1
Df_k(\xx(k))(\xx_n(k)-\xx(k))ds=\\
&\int_0^1\Big(Df_k\big(\xx_n(k)-s[\xx_n(k)-\xx(k)]\big)-
Df_k(\xx(k))\Big)\left(\xx_n(k)-\xx(k)\right)ds.
\end{align*}
Hence
\begin{align*}
&\left|f_k(\xx_n(k))-f_k(\xx(k))- Df_k(\xx(k))(\xx_n(k)-\xx(k))\right|=\\
&\left|\int_0^1\Big(Df_k\big(\xx_n(k)-s[\xx_n(k)-\xx(k)]\big)-
Df_k(\xx(k))\Big)\big(\xx_n(k)-\xx(k)\big)ds\right|\leq\\
&\int_0^1\left|D f_k\big(\xx_n(k)-s[\xx_n(k)-\xx(k)]\big)-
Df_k(\xx(k))\right||\xx_n(k)-\xx(k)|ds.
\end{align*}
Taking into account \eqref{sup-seq}, we obtain
\begin{align*}
&\int_0^1\left|D f_k\big(\xx_n(k)-s[\xx_n(k)-\xx(k)]\big)-
Df_k(\xx(k))\right||\xx_n(k)-\xx(k)|ds\\&\leq\int_0^1\varepsilon
\left|\xx_n(k)-\xx(k)\right|ds=\varepsilon
\left|\xx_n(k)-\xx(k)\right|.
\end{align*}
Thus,  we arrive at
\begin{align*}
&\|F(\xx_n)-F(\xx)-DF(\xx)(\xx_n-\xx)\|^2\\&=
\sum_{k\in\Z}\left|f_k(\xx_n(k))-f_k(\xx(k))-
Df_k(\xx(k))(\xx_n(k)-\xx(k))\right|^2\\
&\leq\sum_{k\in\Z}\varepsilon^2
\left|\xx_n(k)-\xx(k)\right|^2=\varepsilon^2\|\xx_n-\xx\|^2\leq
\varepsilon^2 (2C)^2,
\end{align*}
for $n\geq n_0$. This proves that
\begin{equation}\label{F-IIa}
\|F(\xx_n)-F(\xx)-DF(\xx)(\xx_n-\xx)\|\xrightarrow[n\rightarrow\infty]{}0
\end{equation}
and the claim.
\end{proof}

By the  above claim   and  \eqref{F-I} we have $
\|DG(\xx)(\xx_n-\xx)\|\xrightarrow[n\rightarrow\infty]{}0. $ By
Riesz criterion, Fredholm operators  are invertible modulo compact
operators. Therefore,  there exist a  bounded operator $B\colon
\lc\ra \lc$ and a compact  operator $K\colon \lc\ra \lc$ such that
$B\circ DG(\xx)=I+K$. In turn this implies that
\begin{align}
\begin{split}
&\|\xx_n-\xx\|=\|(B\circ DG(\xx)-K)(\xx_n-\xx)\|\leq\|\big(B\circ
DG(\xx)\big)(\xx_n-\xx)\|+\|K(\xx_n-\xx)\|\leq\\
&\|B\|\|DG(\xx)(\xx_n-\xx)\|+\|K(\xx_n-\xx)\|.
\end{split}
\end{align}
Since a compact operator $K\colon \lc\ra \lc$ maps weakly
convergent sequences onto norm convergent sequences, we infer that
\begin{equation}\label{F-III}
\|K(\xx_n-\xx)\|\xrightarrow[n\rightarrow\infty]{}0.
\end{equation}
Thus, in view of \eqref{F-IIa}--\eqref{F-III}, one obtains
$\|\xx_n-\xx\|\xrightarrow[n\rightarrow\infty]{}0$, which
completes the proof.
\end{proof}
Given $m\in\Z$, by $S_m\colon \lc\ra \lc$ we will denote the
m-shift operator defined by
\begin{equation}\label{shift-S-m}
S_m\xx:=(x_{n+m}).
\end{equation}

\begin{lemma}\label{estimate-II}
For any $\emph{$\xx$}\in \emph{$\lc$}$ and $m\in\Z$, $p_m\left(S_k
G(\emph{$\xx$})-S_k G^\infty_{\pm}(\emph{$\xx$})\right)
\xrightarrow[k\rightarrow\pm\infty]{}0$ $($uniformly on  any
bounded set $B\subset \emph{$\lc$})$.
\end{lemma}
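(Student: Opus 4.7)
The plan is to reduce the statement to the pointwise asymptotic behavior imposed by $(A4)$ after computing the expression explicitly.

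First I would observe that, from the definitions in \eqref{substitution},
\[
G(\xx) - G^{\infty}_{\pm}(\xx) = F^{\infty}_{\pm}(\xx) - F(\xx) = \bigl(f^{\infty}_{\pm}(x_n) - f_n(x_n)\bigr)_{n\in\Z}.
\]
Applying the shift $S_k$ and the evaluation $p_m$, one obtains the explicit formula
\[
p_m\bigl(S_kG(\xx) - S_kG^{\infty}_{\pm}(\xx)\bigr) = f^{\infty}_{\pm}(x_{m+k}) - f_{m+k}(x_{m+k}).
\]
So the lemma reduces to showing that this last quantity tends to $0$ as $k\to\pm\infty$, uniformly for $\xx$ in a bounded subset $B\subset\lc$.

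Next I would exploit that a bound $\sup_{\xx\in B}\|\xx\|\leq C$ forces $|x_n|\leq C$ for every $\xx\in B$ and every $n\in\Z$, in particular $x_{m+k}\in\bar B_d(0,C)$ for all $k$ and all $\xx\in B$. By Assumption $(A4)$ (with $\la$ dropped as announced at the start of this section), $f_n\to f^{\infty}_{+}$ as $n\to+\infty$ uniformly on the bounded set $\bar B_d(0,C)$. Given $\varepsilon>0$, pick $n_0$ such that
\[
\sup_{|y|\leq C}\bigl|f_n(y)-f^{\infty}_{+}(y)\bigr|<\varepsilon \quad \text{for all } n\geq n_0.
\]
Then, for every $k$ with $m+k\geq n_0$ and every $\xx\in B$,
\[
\bigl|p_m\bigl(S_kG(\xx) - S_kG^{\infty}_{+}(\xx)\bigr)\bigr| \leq \sup_{|y|\leq C}\bigl|f_{m+k}(y)-f^{\infty}_{+}(y)\bigr|<\varepsilon,
\]
which is the required uniform convergence as $k\to+\infty$. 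The case $k\to-\infty$ with $G^{\infty}_{-}$ is handled by the symmetric choice of $n_0$ coming from the minus-infinity part of $(A4)$.

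There is no serious obstacle in this argument: the only mildly subtle point is observing that the uniformity over $B$ comes for free from the uniform $\lc$-bound on $\xx$, which converts the evaluation $x_{m+k}$ into a point of a fixed compact disk $\bar B_d(0,C)$, where $(A4)$ supplies uniform convergence of $f_n$ to $f^{\infty}_{\pm}$.
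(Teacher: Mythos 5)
Your proof is correct and follows essentially the same route as the paper's: both reduce the claim to the identity $p_m\bigl(S_kG(\xx)-S_kG^{\infty}_{\pm}(\xx)\bigr)=f^{\infty}_{\pm}(x_{m+k})-f_{m+k}(x_{m+k})$, confine $x_{m+k}$ to a fixed closed disk $\bar B_d(0,C)$ using the boundedness of $B$, and invoke the uniform convergence in $(A4)$ on that disk. No gaps.
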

\begin{proof}
Let $B\subset \lc$ be a bounded subset. Then there exists a
constant $C$ such that $|\xx(n)|\leq C$, for all $n\in\Z$ and
$\xx\in B.$ Assumption $(A4)$ implies that there exists a positive
integer $\tilde{n}_0=n(\varepsilon,B)$ such that $|f_{\pm
k}(x)-f^{\infty}_{\pm}(x)|<\varepsilon$, for $k\geq \tilde{n}_0$
and $x$ with $|x|\leq C$. Consequently,
\begin{equation}\label{nierownosc}
|f_{\pm k}(\xx(\pm k))-f^{\infty}_{\pm}(\xx(\pm k))|<\varepsilon,
\end{equation}
for $k\geq \tilde{n}_0$ and $\xx\in B.$ Finally, taking into
account \eqref{nierownosc}, we deduce that for any $k\geq
n_0:=\tilde{n}_0+|m|$ one has
\begin{equation*}
|p_{m\pm k}(G(\xx)-G^{\infty}_{\pm}(\xx))|=|p_{m\pm k}(
F(\xx)-F^{\infty}_{\pm}(\xx))| =|f_{m\pm k}(\xx(m\pm
k))-f^{\infty}_{\pm}(\xx(m\pm k))|<\varepsilon,
\end{equation*}
for all $\xx\in B$. Finally, it suffices to observe that
\begin{equation*}
\left|p_m\left(S_l G(\xx)-S_l
G^\infty_{\pm}(\xx)\right)\right|=|p_{m+l}(G(\xx)-G^{\infty}_{\pm}(\xx))|.
\end{equation*}
This completes the proof.
\end{proof}
\begin{theorem}\label{Theorem-properness} Under Assumptions $(A1)$--$(A2)$ and $(A4)$,
$G\colon \emph{$\lc$}\ra \emph{$\lc$}$ is proper on closed bounded
subsets of $\emph{$\lc$}$.
\end{theorem}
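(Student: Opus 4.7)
The plan is to verify condition $(b)$ of Lemma \ref{properness}. Start with a bounded sequence $(\xx_n)\subset \lc$ such that $G(\xx_n)\to \yy$ in $\lc$. By the weak compactness of bounded sets in the Hilbert space $\lc$, pass to a subsequence with $\xx_n\rightharpoonup \xx$, and by the weak continuity of $G$ from Lemma \ref{weakly-continuous} identify $\yy=G(\xx)$. In view of Lemma \ref{convergence}, it suffices to prove that $(\xx_n-\xx)$ vanishes uniformly at infinity, since then $\|\xx_n-\xx\|_{\infty}\to 0$ and the conclusion of $(b)$ holds.

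Suppose for contradiction that $(\xx_n-\xx)$ does not vanish uniformly at infinity and apply the shifted subsequence Lemma \ref{shift} to this sequence. By symmetry it is enough to treat case $(ii)$: there are shifts $l_k\to+\infty$, a subsequence $(\xx_{n_k})$, and a nonzero weak limit $\widetilde{\xx}\in\lc$ of $\widetilde{\xx}_k(m):=\xx_{n_k}(m+l_k)-\xx(m+l_k)$. Set $\zz_k:=S_{l_k}\xx_{n_k}$. Since $\xx\in\lc$, Lemma \ref{convergence-weak-II} yields $S_{l_k}\xx\rightharpoonup\oo$, so $\zz_k\rightharpoonup\widetilde{\xx}\neq\oo$.

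Exploit the autonomy of $G^\infty_+$, which gives $S_{l_k}\circ G^\infty_+=G^\infty_+\circ S_{l_k}$, to write
\begin{equation*}
S_{l_k}G(\xx_{n_k})=G^\infty_+(\zz_k)+r_k,\qquad r_k:=S_{l_k}\bigl(G(\xx_{n_k})-G^\infty_+(\xx_{n_k})\bigr).
\end{equation*}
By Lemma \ref{estimate-II} (applied to the bounded set $\{\xx_{n_k}\}$), each component $p_m(r_k)\to 0$. Moreover $\|G(\xx_{n_k})-\yy\|\to 0$ combined with $S_{l_k}\yy\rightharpoonup\oo$ (Lemma \ref{convergence-weak-II}) forces $S_{l_k}G(\xx_{n_k})\rightharpoonup\oo$, hence in particular $p_m(S_{l_k}G(\xx_{n_k}))\to 0$ for every $m$. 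Thus $p_m(G^\infty_+(\zz_k))\to 0$ for every $m$. Since $G^\infty_+$ is weakly continuous (Lemma \ref{weakly-continuous}), $G^\infty_+(\zz_k)\rightharpoonup G^\infty_+(\widetilde{\xx})$, so $G^\infty_+(\widetilde{\xx})=\oo$. Therefore $\widetilde{\xx}\neq\oo$ is a homoclinic trajectory of the autonomous system $x_{n+1}=f^\infty_+(x_n)$, contradicting $(A4)$. Case $(iii)$ is handled identically using $G^\infty_-$.

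The main obstacle is the nonautonomous nature of $G$: one cannot evaluate $G$ directly on the shifted sequence $\zz_k$, because the coefficient maps $f_n$ depend on $n$. The device that resolves this is the comparison encapsulated in Lemma \ref{estimate-II}, which shows that the error between $S_{l_k}G$ applied to a bounded sequence and the autonomous asymptotic operator $G^\infty_+$ applied to the shifted sequence dies pointwise as $l_k\to+\infty$. Once that comparison is in place, weak compactness plus weak continuity transport the vanishing of $G$-values into a homoclinic solution of the limit system, which is forbidden by $(A4)$.
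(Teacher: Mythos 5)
Your proposal is correct and follows essentially the same route as the paper: reduce to condition $(b)$ of Lemma \ref{properness} via Lemma \ref{convergence}, apply the shifted subsequence Lemma \ref{shift}, use the isometry of the shifts together with Lemma \ref{estimate-II} and the autonomy of $G^\infty_\pm$ to pass to the asymptotic operator, and derive a contradiction with $(A4)$ from weak continuity. The only (immaterial) difference is that you apply Lemma \ref{shift} to $(\xx_n-\xx)$ rather than to $(\xx_n)$, which is equivalent since $\xx\in\lc$ itself vanishes at infinity.
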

\begin{proof}
In view of Lemma \ref{convergence} and Lemma \ref{properness} it
suffices to show that any bounded sequence $(\xx_n)$ in $\lc$ such
that $\|G(\xx_n)-\yy\|\xrightarrow[n\rightarrow\infty]{} 0$ for
some $\yy\in \lc$ has a weakly convergent subsequence which
vanishes uniformly at infinity. Since $(\xx_n)$ is bounded, we may
assume without loss of generality that
$\xx_n\xrightharpoonup[n\rightarrow\infty]{} \xx$ weakly in $\lc$
for some $\xx\in \lc$.  If the alternative $(ii)$ of Lemma \ref{shift} holds, $(\xx_n)$ has a subsequence
$x_{n_k}$ whose translates $\widetilde{\xx}_k(n):=\xx_{n_k}(n+l_k)$ converge weakly to   $\widetilde{\xx}\neq \oo.$

Observe that
$\|G(\xx_n)-\yy\|=\|S_{l_k}G(\xx_n)-\widetilde{\yy}_k\|,$
where $\widetilde{\yy}_k:=S_{l_k}\yy,$ and therefore
\begin{align*}
&\|S_{l_k}G(\xx_{n_k})-\widetilde{\yy}_k\|\leq
\|S_{l_k}G(\xx_{n_k})-S_{l_k}G(\xx_{n})\|+\|S_{l_k}G(\xx_{n})-\widetilde{\yy}_k\|=\\&
\|G(\xx_{n_k})-G(\xx_{n})\|+\|G(\xx_{n})-\yy\|,
\end{align*}
which shows that
\begin{equation}\label{prop1}
\|S_{l_k}G(\xx_{n_k})-\widetilde{\yy}_k\|\xrightarrow[k\rightarrow\infty]{}0.
\end{equation}
Now let us fix $m\in\Z$. By Lemma \ref{estimate-II}
\begin{equation}\label{prop2}
\left|p_m\left(S_{l_{k}}G(\xx_{n_k})-S_{l_{k}}
G_{+}^{\infty}(\xx_{n_k})\right)\right|
\xrightarrow[k\rightarrow\infty]{}0
\end{equation}
and hence $ \left|
p_m(S_{l_{k}}G^{\infty}_{+}(\xx_{n_k})-\widetilde{\yy}_{k})
\right|\xrightarrow[k\rightarrow\infty]{}0$ as well. Since
\begin{align*}
S_{l_{k}}(G^{\infty}_{+}(\xx_{n_{k}}))=S(\xx_{n_{k}}
(n+l_{k}))-(f^{\infty}_{+}(\xx_{n_{k}}(n+l_{k})))
=S(\widetilde{\xx}_{k}(n))-(f^{\infty}_{+}(\widetilde{\xx}_{k}(n)))
=G^{\infty}_{+}(\widetilde{\xx}_{k}),
\end{align*}
we deduce that
\begin{equation*}
|p_m(G^{\infty}_{+}(\widetilde{\xx}_{k})-\widetilde{\yy}_{k}
)|\xrightarrow[k\rightarrow\infty]{}0.
\end{equation*}

Since the sequence
$(G^{\infty}_{+}(\tilde{\xx}_{k})-\widetilde{\yy}_{k})$ is bounded
in $\lc,$ it follows from Lemma \ref{convergence-weak} that
$G^{\infty}_{+}(\tilde{\xx}_{k})-\widetilde{\yy}_{k}\xrightharpoonup[k\rightarrow\infty]{}
\oo$ in $\lc.$ But Lemma \ref{convergence-weak-II} implies that
$\widetilde{\yy}_{k}\xrightharpoonup[k\rightarrow\infty]{}\oo$ in
$\lc$. Hence we get that
$G^{\infty}_{+}(\widetilde{\xx}_{k})\xrightharpoonup[k\rightarrow\infty]{}
\oo$ in $\lc$. However, the weak sequential continuity of
$G^{\infty}_{+}\colon \lc\rightarrow \lc$ implies that $
G^{\infty}_{+}(\widetilde{\xx}_{k})\xrightharpoonup[k\rightarrow\infty]{}
G^{\infty}_{+}(\widetilde{\xx}) $ weakly in $\lc$, which implies
that $G^{\infty}_{+}(\widetilde{\xx})=\oo,$ contradicting
Assumption $(A4)$. This shows that the sequence $(\xx_{n})$ cannot
have the property $(ii$) of Lemma \ref{shift}. By a similar
arguments we can exclude the property $(iii)$ in Lemma
\ref{shift}. This completes the proof.
\end{proof}
\section{Proof of the main theorem}
For the proof    we will use an extension of Leray-Schauder degree
to proper Fredholm maps of index $0$ introduced in  \cite{Pej-Rab}
under the name of {\it base point degree.}  What is of interest
for us is a very special form of the homotopy principle  for this
degree. As a consequence of Kuiper's theorem about the
contractibility of the linear group of a Hilbert space, only the
absolute value of  any  degree theory for general Fredholm maps
extending the Leray-Schauder degree can be homotopy invariant. The
most interesting characteristic  of  the base point degree
consists   in that the change in sign of the degree along a
homotopy can be determined using an invariant of paths of linear
Fredholm operators of index zero called  {\it parity.}

The parity is defined as follows: Let  $L\colon [a,b]\ra
\Phi_0(X,Y)$ be a path of Fredholm operators such that both $L_a$
and $L_b$ are invertible.  It can be shown \cite{Fi-Pej-88}  that
there exists a path of invertible operators $P\colon [a,b]\ra\,
GL(Y,X)$ such that $ L_t P_t = \Id_Y-K_t$, where  $K_t$ is a
family of operators with $\im  K_t$ contained in a finite
dimensional subspace $V$ of $Y.$ Such a path $P$ is called a
(regular) parametrix. If $P$ is a parametrix, then $ L_a P_a$ and
$ L_b P_ b$ are invertible, and so are their restrictions $C_a,C_b
\colon V \ra V$ to the subspace $V$ containing the images of
$K_a,K_b.$ The {\sl parity} of the path $L$ is the element
$\sigma(L) \in \Z_{2}= \{1,-1\}$ defined  by
\begin{equation*}
\sigma(L) = \sign\det C(a)\sign \det C(b).
\end{equation*}
The above definition is independent of the choices involved. The
parity is multiplicative and invariant under homotopies of paths
with invertible end points. If the path $L$ is closed, i.e.,
$L_a=L_b,$ then, via the identification $S^1\simeq [a,b]/\{a,b\}$
we can consider the path $L $ as a map $L \colon S^1\rightarrow
\Phi_{0} (X,Y)$  and relate  the parity of a closed path with the
obstruction to triviality  $w_1\colon \widetilde{KO}(S^1) \ra
\Z_2.$

\begin{lemma}[\cite{Fi-Pej-88}, Proposition  1.6.4 or \cite{Pej-Ski}, Proposition 3.1]\label{parity-lemma}
Under the above assumptions,
\begin{equation}\label{paritywhitney}
\sigma(L) = w_1(\Ind L).
\end{equation}
\end{lemma}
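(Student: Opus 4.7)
My plan is to use the parametrix structure of $L$ to compute both $\Ind L$ and the Stiefel-Whitney class of $\Ind L$ explicitly, and then compare with the definition of $\sigma(L)$.

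First I would set up the common data. Choose a (regular) parametrix $P\colon [a,b]\ra GL(Y,X)$ with $L_tP_t=\Id_Y-K_t$ and $\im K_t\subset V$ for a finite-dimensional $V\subset Y$, as furnished by \cite{Fi-Pej-88}. Since every $y\in Y$ decomposes as $y=(y-K_ty)+K_ty$ with $y-K_ty\in\im(L_tP_t)=\im L_t$ and $K_ty\in V$, we have $\im L_t+V=Y$ uniformly in $t$, so $V$ is admissible for computing the index bundle via the recipe \eqref{defind}. The key observation is that the associated bundle $E_t:=L_t^{-1}(V)$ equals $P_t(V)$: indeed, $L_t(P_ty)=y-K_ty\in V$ iff $y\in V$, because $K_ty\in V$ already. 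Therefore
\[
\Ind L=[P(V)]-[\Theta(V)]\in\widetilde{KO}(S^1),
\]
and since $\Theta(V)$ is trivial, $w_1(\Ind L)=w_1(P(V))$ by \eqref{morewhit}.

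Next I would evaluate $w_1(P(V))$ directly from its definition. Pick a basis $v_1,\ldots,v_k$ of $V$ and set $e_i(t):=P_tv_i$; since each $P_t$ is an isomorphism, this is a continuous frame of the pullback of $P(V)$ to $[a,b]$. The matrix $C$ of \eqref{whitney} is then determined by $e_i(b)=\sum_jC_{ji}e_j(a)$, i.e.\ $P_bv_i=P_a\bigl(\sum_jC_{ji}v_j\bigr)$, so $C$ is the matrix of $(P_a^{-1}P_b)\bigm|_V$ in the basis $\{v_i\}$. One has to check that $P_a^{-1}P_b$ actually preserves $V$, but this is automatic: $P_a(V)$ and $P_b(V)$ are both equal to $L_a^{-1}(V)=L_b^{-1}(V)$ (the endpoint fiber of $E$ in $S^1$), so $P_b(V)=P_a(V)$.

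Finally I would connect $C$ with $\sigma(L)$. Because the path is closed and both endpoints are invertible, $L_a=L_b$ is invertible, so $P_a=L_a^{-1}(\Id-K_a)$ and $P_b=L_a^{-1}(\Id-K_b)$, giving the clean identity
\[
P_a^{-1}P_b=(\Id-K_a)^{-1}(\Id-K_b).
\]
Since $(\Id-K_a)$ preserves $V$ and is invertible there (being the restriction of an injective Fredholm operator to a finite-dimensional invariant subspace), restricting to $V$ yields $C=C_a^{-1}C_b$, where $C_a=(\Id_V-K_a\bigm|_V)$ and $C_b=(\Id_V-K_b\bigm|_V)$ are precisely the matrices entering the definition of the parity. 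Taking signs of determinants,
\[
w_1(\Ind L)=\sign\det C=\sign\det C_a\cdot\sign\det C_b=\sigma(L),
\]
which is \eqref{paritywhitney}. The only real obstacle is the bookkeeping that the chosen $V$ simultaneously controls the parametrix and the index bundle and that the restrictions to $V$ behave multiplicatively; once this is in place, the identification of $E_t$ with $P_t(V)$ does all the work.
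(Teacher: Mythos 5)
Your argument is correct and complete. The paper itself does not prove this lemma (it defers to \cite{Fi-Pej-88} and \cite{Pej-Ski}), so there is no in-text proof to compare against, but your derivation is exactly the expected one and all the key verifications are in place: the identity $L_t^{-1}(V)=P_t(V)$ (which holds because $L_tP_ty=y-K_ty$ lies in $V$ iff $y$ does, $K_ty$ being already in $V$), the admissibility of $V$ for the index bundle, the fact that $P_a(V)=P_b(V)$ at the glued endpoint, and the factorization $P_a^{-1}P_b=(\Id-K_a)^{-1}(\Id-K_b)$ using $L_a=L_b$. From there $C=C_a^{-1}C_b$ on $V$ and $\sign\det(C_a^{-1}C_b)=\sign\det C_a\cdot\sign\det C_b$ gives \eqref{paritywhitney}. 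Two cosmetic remarks: the object you call $P(V)$ is literally a bundle over $[a,b]$, whereas the index bundle lives over $S^1$; what you are really exhibiting is a frame $e_i(t)=P_tv_i$ of the pullback $p^*E$ of the bundle $E$ with fibers $E_\la=L_\la^{-1}(V)$, which is precisely the data needed in \eqref{whitney}, so the computation is legitimate as stated. Also, you implicitly use that both $\sigma(L)$ and $\Ind L$ are independent of the choices of $V$ and of the parametrix; both facts are asserted in the text, so computing each side with one convenient common choice of $V$ suffices.
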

Now let us recall the construction of the degree. A
$C\ö1$-Fredholm map of index $0$ is by definition a $C^1$-map
$f\colon\mathcal{O}\to Y$  such that the  Fr\'{e}chet derivative
$Df(x)$ of $f$ at $x$ is a Fredholm operator of index $0.$
\newline\indent Let $\mathcal{O}\subset X$ be an open simply connected set.
An {\it admissible triple}  $(f,\Omega,y)$   is  defined by  a
$C\ö1$-Fredholm map of index $0,$ $f\colon \mathcal{O}\ra Y$ which
is proper on closed bounded subsets of $\cO,$  an open bounded set
$\Omega $ whose closure is contained in $\mathcal{O}$ and a point
$y\in Y$  such that $y\not\in f(\partial\Omega).$ The construction
of  \cite{Pej-Rab} associates to each admissible triple
$(f,\Omega,y)$ and each point $b \in \cO$, called {\it base
point}, an integral number $\text{deg}_b(f,\Omega,y)\in \Z$
called {\it base point degree}. A {\it regular base point} is a
point $b\in \cO$ which is a regular point  of the map $f$ (i.e.,
$Df(b)$ is an isomorphism). If $b$ is a regular base point and $y$
is a regular value of the restriction of $f$ to $\Omega$, then the
base point degree is defined by
\begin{equation}\label{deg}
\text{deg}_b(f,\Omega,0)=\sum_{x\in
f^{-1}(0)}\sigma(Df\circ\gamma),
\end{equation}
where $\gamma$ is any path in $\mathcal{O}$ joining $b$ to $x.$
\newline\indent In order to define the degree for any $y\in Y$ it is used an
approximation result by regular values (see \cite{Pej-Rab}  for
details).  If  $b $ is a singular point of $f$, then by definition
$\text{deg}_b(f,\Omega,y)=0.$ The degree such defined  has the
usual additivity, excision and normalization properties. For
$C\ö1$-maps that are compact perturbations of the identity  it
coincides with the Leray-Schauder degree.  However the homotopy
property requires a different formulation (for the sake of
definiteness  we will take $y=0$):
\begin{lemma}\label{varhomotopy}
Let $h\colon [0,1]\times\mathcal{O}\ra Y$  be a continuous map
that is proper on closed bounded subsets and such that each $h_t$
is a $C^1$-Fredholm map.  Let   $\Omega$ be  an open bounded
subset of $\mathcal{O}$ such that $0\not\in
h([0,1]\times\partial\Omega).$ If $b_i\in \mathcal O$  is a
regular base point for  $h_i:=h(i,\cdot),i= 0,1,$ then
\begin{equation*}
\emph{deg}_{b_0}(h_0,\Omega,0)=\sigma(M)\emph{deg}_{b_1}
(h_1,\Omega,0),
\end{equation*}
where $M\colon [0,1] \ra \Phi_0(X,Y)$ is the path $ L\circ
\gamma,$ where $L(t,x):=D_xh(t,x)$ and $\gamma$ is any path
joining $(0,b_0)$ to $(1,b_1)$ in $[0,1]\times \mathcal{O}.$
\end{lemma}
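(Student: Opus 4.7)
The plan is to compute both sides by writing each base-point degree as a sum of parity signs over zeros and then carrying out a combinatorial analysis on the zero set of the homotopy inside the cylinder $[0,1]\times\cO$. By the approximation property built into the definition of the base-point degree, it suffices to handle the case in which $0$ is a regular value of both $h_0|_\Omega$ and $h_1|_\Omega$, so that
\[
\emph{deg}_{b_i}(h_i,\Omega,0)=\sum_{x\in h_i^{-1}(0)\cap\Omega}\epsilon_i(x),\qquad \epsilon_i(x):=\sigma(Dh_i\circ\gamma_{b_i,x})\in\{+1,-1\},
\]
each sum being finite by properness and the boundary condition $0\not\in h([0,1]\times\partial\Omega)$, and each sign $\epsilon_i(x)$ being independent of the path $\gamma_{b_i,x}$ in $\cO$ by homotopy invariance of parity combined with the simple connectedness of $\cO$.

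The crucial remark is that the same path-independence holds inside the cylinder $[0,1]\times\cO$: since $\cO$ is simply connected, so is $[0,1]\times\cO$, and consequently for any two points $p,q$ at which $L=D_xh$ is invertible, the parity $\sigma(L\circ\beta)$ is independent of the path $\beta$ joining $p$ to $q$. Denote this common value by $\tau(p,q)$. Multiplicativity of parity under concatenation gives $\tau(p,r)=\tau(p,q)\,\tau(q,r)$, and in particular $\sigma(M)=\tau((0,b_0),(1,b_1))$, while $\epsilon_i(x)=\tau((i,b_i),(i,x))$. This reduces the proof of the identity to a bookkeeping argument over the zero set of $h$.

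Next, after a Smale--Sard type perturbation of $h$ (permissible since the base-point degree is stable under small perturbations), I would arrange that $Z:=h^{-1}(0)\cap([0,1]\times\bar\Omega)$ is a compact one-manifold with boundary contained in $\{0,1\}\times\Omega$, decomposing into finitely many arcs and closed loops. For an arc $\alpha$ joining $(0,x_0)$ to $(1,x_1)$, a nearby path $\tilde\alpha$ in the cylinder can be chosen along which $L$ remains invertible, so that $\tau((0,x_0),(1,x_1))=\sigma(L\circ\tilde\alpha)=+1$; multiplicativity of $\tau$ then yields $\epsilon_1(x_1)=\sigma(M)\,\epsilon_0(x_0)$. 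For arcs with both endpoints at $t=0$ (respectively $t=1$), an analogous loop computation, combined with the sign reversal of parity occurring at each vertical-tangency point of $Z$, will show that the two boundary zeros contribute opposite signs to $\emph{deg}_{b_0}$ (respectively $\emph{deg}_{b_1}$) and therefore cancel; closed loops contribute nothing. Summing over all arcs gives the claimed identity $\emph{deg}_{b_1}(h_1,\Omega,0)=\sigma(M)\,\emph{deg}_{b_0}(h_0,\Omega,0)$.

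The hardest step will be the local analysis at the turning points of $Z$, where the tangent to the zero manifold becomes vertical and $L$ fails to be invertible. One must verify carefully that each such event produces the expected sign reversal, both in order to justify the choice of a nearby invertible path $\tilde\alpha$ for the $(0,1)$-arcs and, more delicately, to secure the cancellation of same-side arcs. Once this local picture is established, the remainder of the proof is a routine consequence of the multiplicativity and path-independence of $\tau$.
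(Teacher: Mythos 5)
Your proposal is the classical ``cobordism'' proof of homotopy invariance, and it openly defers the one step that actually carries the content of the lemma; moreover, its key preparatory step is not available in the setting at hand. First, the transversality step fails: $h$ is only assumed \emph{continuous} on $[0,1]\times\mathcal{O}$ with each slice $h_t$ of class $C^1$, so $h^{-1}(0)$ has no reason whatsoever to be a one-dimensional manifold, and no Sard--Smale argument applies to $h$ as a map on the cylinder. Even if you strengthen the hypothesis to joint $C^1$ regularity, $h\colon[0,1]\times\mathcal{O}\to Y$ is then Fredholm of index $1$, and the Quinn--Sard theorem requires $C^k$ with $k\ge\operatorname{ind}+1=2$; the entire raison d'\^etre of the Pejsachowicz--Rabier degree used here (as opposed to the earlier $C^2$-degree of \cite{Fi-Pej-Rab}, whose homotopy invariance \emph{is} proved by your arc-and-loop argument) is to dispense with $C^2$ smoothness. ``Stability of the degree under small perturbations'' does not buy you a regularity upgrade: smooth approximation of $C^1$ maps is unavailable in general Banach spaces, and even where it is available you would still have to check that the perturbed homotopy remains proper on closed bounded sets and Fredholm, none of which is automatic.

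Second, the local analysis at the turning points, which you correctly identify as the hardest step, is not a finishing touch but is essentially equivalent to the theorem. Path-independence of the parity in the simply connected cylinder makes $\tau(p,q)$ well defined, but it gives you no way to \emph{evaluate} it: for a $(0,1)$-arc you must actually exhibit a path along which $D_xh$ stays invertible (your ``nearby path $\tilde\alpha$'' must dodge the singular set of $L$, which for a merely $C^1$ family is a closed set with no stratified fold structure to go around), and for a same-side arc you must show that the parity of $L$ along the arc equals $-1$, i.e.\ that an odd number of sign reversals occurs. Both facts require the normal form at a fold, hence genericity and $C^2$ data. Note that the paper does not prove this lemma at all; it quotes it from \cite[Lemma 2.3.1]{Pejs-1}, where the argument runs along entirely different lines --- partition $[0,1]$ finely, use properness to confine the zero sets, and combine the homotopy invariance of $|\deg_b|$ with the multiplicativity of the parity under concatenation --- which is precisely the scheme the authors then reuse to derive Lemma \ref{generalized-homotopy} from Lemma \ref{varhomotopy}. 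As it stands, your proof has a genuine gap at both of these points.
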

Notice that $\sigma(M)$ is independent of the choice of the path
$\gamma$ because $[0,1]\times\cO$ is simply connected. The proof
of the lemma \ref{varhomotopy}  can be found in (\cite[Lemma
2.3.1]{Pejs-1}).  Here we will need a minor generalization of the
above property.
\begin{lemma}[Generalized Homotopy Property]\label{generalized-homotopy}
Let $h\colon [0,1]\times\mathcal{O}\ra Y$  be a continuous map
that is proper on closed bounded subsets and such that each $h_t $
is a $C^1$-Fredholm map. Let $\Omega$ be an open and  bounded set
whose closure is contained in $[0,1]\times \mathcal{O}$ such that
$0\not\in h(\partial \Omega).$ If $b_i\in \mathcal O$  is a
regular base point for  $h_i:=h(i,\cdot),i=0,1,$ then
\begin{equation}\label{homotopy}
\emph{deg}_{b_0}(h_0,\Omega_0,0)=\sigma(M)\emph{deg}_{b_1}(h_1,\Omega_1,0),
\end{equation}
where $M$ is as above and $\Omega_t:=\{x\in X\mid (t,x)\in
\Omega\}$, for $t\in[0,1].$
\end{lemma}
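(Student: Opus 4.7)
The strategy is to reduce the generalized case to the cylindrical case of Lemma~\ref{varhomotopy} by slicing $[0,1]$ into short intervals on which the set $\Omega$ can be replaced, via excision, by a product region $[s_{i-1},s_i]\times V_i$. The starting observation is that, since $h$ is proper on closed bounded subsets of $[0,1]\times\mathcal{O}$ and $0\notin h(\partial\Omega)$, the zero set $K:=h^{-1}(0)\cap\overline\Omega$ is compact and contained in the open set $\Omega$.

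First I would construct the cylinders. For every $(t_*,x_*)\in K$ openness of $\Omega$ yields an open product neighborhood $J_*\times B_*\subset\Omega$; by compactness of $K$ I extract a finite subcover and refine it to obtain a partition $0=s_0<s_1<\cdots<s_n=1$ together with open bounded sets $V_1,\dots,V_n\subset\mathcal{O}$ satisfying $[s_{i-1},s_i]\times\overline{V_i}\subset\Omega$ and $K\cap([s_{i-1},s_i]\times\mathcal{O})\subset[s_{i-1},s_i]\times V_i$; in particular $0\notin h([s_{i-1},s_i]\times\partial V_i)$. Next, for $0<i<n$, I would choose regular base points $b_i\in\mathcal{O}$ for $h_{s_i}$, which is possible by density of regular points (refining the partition slightly if needed), and set $b_n:=b_1$.

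On each strip, applying Lemma~\ref{varhomotopy} to the restriction of $h$ to $[s_{i-1},s_i]\times V_i$ gives
\[
\text{deg}_{b_{i-1}}(h_{s_{i-1}},V_i,0)=\sigma(M^{(i)})\,\text{deg}_{b_i}(h_{s_i},V_i,0),
\]
where $M^{(i)}=D_xh\circ\gamma_i$ for a path $\gamma_i$ in $[s_{i-1},s_i]\times\mathcal{O}$ from $(s_{i-1},b_{i-1})$ to $(s_i,b_i)$. Since for every $t\in[s_{i-1},s_i]$ we have $V_i\subset\Omega_t$ and $h_t^{-1}(0)\cap\Omega_t\subset V_i$, the excision property of the base point degree lets me replace $V_i$ by $\Omega_{s_{i-1}}$ on the left and by $\Omega_{s_i}$ on the right. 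Telescoping the $n$ resulting identities and invoking the multiplicativity of the parity under concatenation together with its invariance under homotopies with invertible endpoints (available here because $[0,1]\times\mathcal{O}$ is simply connected, so $\gamma_1\ast\cdots\ast\gamma_n$ is path-homotopic, rel.\ endpoints, to any $\gamma$ prescribed in the statement) yields $\prod_i\sigma(M^{(i)})=\sigma(M)$, and \eqref{homotopy} follows. The main obstacle lies in this first cylindrical decomposition: simultaneously arranging the partition and the sets $V_i$ so that each closed cylinder sits inside $\Omega$ and contains the full slice of $K$ over its base interval. Once this is done, the remaining arguments are a clean application of excision and the formal properties of parity, using Lemma~\ref{varhomotopy} as the only nontrivial input.
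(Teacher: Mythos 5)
Your overall route is the same as the paper's: slice $[0,1]$ into short intervals, trap the zero set over each interval inside a cylinder $[s_{i-1},s_i]\times V_i$ contained in $\Omega$, apply Lemma \ref{varhomotopy} on each cylinder, pass from $V_i$ to the slices $\Omega_{s_{i-1}},\Omega_{s_i}$ by excision, and telescope using the multiplicativity and homotopy invariance of the parity. (The paper packages your cylinder construction as upper semicontinuity of $t\mapsto h_t^{-1}(0)\cap\Omega_t$ plus a Lebesgue number; note that a finite subcover of $K$ by products alone does not yet give $K\cap([s_{i-1},s_i]\times\mathcal{O})\subset [s_{i-1},s_i]\times V_i$ with $[s_{i-1},s_i]\times\overline{V_i}\subset\Omega$ — you need exactly that semicontinuity/compactness argument to "refine", but the idea is the same.)

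The genuine gap is your choice of the intermediate base points: regular points of a $C^1$-Fredholm map of index $0$ are \emph{not} dense in general. For instance, if $h_{s_i}$ equals a fixed non-invertible linear Fredholm operator $L$ of index $0$ (which is proper on closed bounded sets), then $Dh_{s_i}(x)=L$ is never an isomorphism and $h_{s_i}$ has no regular points at all; this can persist for all $t$ in a subinterval, so no refinement of the partition rescues the argument. Without a regular base point at $s_i$, Lemma \ref{varhomotopy} cannot be applied on either adjacent cylinder. The paper circumvents this by proving the lemma only when $\text{deg}_{b_0}(h_0,\Omega_0,0)\neq 0$ (the only case it needs): a map with no regular points has degree $0$ with respect to every base point, and the absolute value of the degree is homotopy invariant, so nonvanishing of the degree at $t=0$ forces every $h_\tau$ to have a regular point. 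To repair your proof, either adopt this reduction, or treat separately the degenerate case where some $h_{s_i}$ has no regular point, showing there that both sides of \eqref{homotopy} vanish by the absolute homotopy invariance applied on the first cylinder where regularity fails. Everything else in your proposal is sound.
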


\begin{proof}
We will prove the lemma assuming that
$\text{deg}_{b_0}(h_0,\Omega_0,0)\neq 0,$ which is the only case
that we will need in the sequel and leave  to the reader the
completion of the proof in the general case. Since  the degree of
a map without regular points vanishes,  being the absolute value
of the degree invariant under homotopy, it follows from our
assumption that for all $\tau \in [0,1]$ there exists a regular
base point for $h_{\tau}$. Let $C(t):=\{x\in X\mid h(t,x)=0 \}
\cap \Omega=\{x\in X\mid h(t,x)=0 \} \cap \bar\Omega.$ Now we will
prove that the map $[0,1]\ni t\mapsto C(t)\subset Y$ is upper
semicontinuous, i.e., for any point $t_0\in [0,1]$ and any open
neighborhood $V\subset X$ such that $C(t_0)\subset V$ there exists
an open neighborhood $U_{t_0}$ of $t_0$ in $[0,1]$ such that
$C(t)\subset V$ for all $t\in U_{t_0}$. Indeed, let $t_0$ and $V$
be as above. Assume on the contrary that for any $\varepsilon>0$
there exists $t_{\varepsilon}\in
(t_0-\varepsilon,t_0+\varepsilon)\cap[0,1]$ such that
$C(t_{\varepsilon})\cap (X\setminus V)\neq\emptyset$. Thus there
exists a sequence $(t_n,x_n)\in \bar\Omega$ such that
$t_n\xrightarrow[n\rightarrow\infty]{} t_0$, $h(t_n,x_n)=0$ and
$x_n\in X\setminus V$. Since $h^{-1}(0)\cap \bar\Omega$ is
compact, we can assume that $x_n\xrightarrow[n\rightarrow\infty]{}
x_0$ for some $x_0\in X\setminus V$. Furthermore, the continuity
of $h$ implies that $h(t_0,x_0)=0$. Since $(t_0,x_0)\in
\bar\Omega$, it follows that $x_0\in C(t_0)\subset V$, which
contradicts the fact that $x_0\in X\setminus V$.
\newline\indent
Thus, given any  point $t\in I=[0,1]$ and an open neighborhood
$V_t\subset \Omega_t $ of $C(t)$ we can find an  open neighborhood
$U_t$ of $t$ in $I$ such that $C(t') \subset V_t $ for all $t'\in
U_t.$ Let $\delta>0$ be the Lebesgue number of the covering $\{U_t
\mid t\in I\}$  and let $0=t_0 <t_1< \dots <t_n=1$ be a partition
of $I$ with mesh less than $\delta.$ Then any subinterval $I_i=
[t_{i-1}, t_i]$ is contained in some element $U_{\tau_i}$ of the
covering and therefore $C(I_i ) \subset V_{\tau_i}.$ This means
that  the graph of   $C_{\mid I_i},$ which, by its very
definition, is the set $\{ (t,x)\in  \Omega \mid t\in I_i,
h(t,x)=0 \},$   must be contained in  $I_i \times V_{\tau_i}.$
\newline\indent
Since $0\notin h(\bar\Omega\cap(I_i\times X)\setminus(I_i \times
V_{\tau_i})),$ choosing any  regular base point  $b_i$ of
$h_{t_i},$ we can apply Lemma \ref{varhomotopy} to the map
$h\colon I_i \times \bar  V_{\tau_i} \ra Y$     and use the
excision property of the degree in order to obtain
\begin{equation*} \label{partition}
\text{deg}_{b_{i-1}}(h_{t_{i-1}},\Omega_{t_{i-1}},0)=
\text{deg}_{b_{i-1}}(h_{t_{i-1}},V_{\tau_{i}},0)= \sigma(M_i)
\text{deg}_{b_{i}}(h_{t_{i}},V_{\tau_{i}},0) = \sigma(M_i)
\text{deg}_{b_i} (h_{t_i},\Omega_{t_i},0),
\end{equation*}
where $M_i (t) =D_x h(\gamma_i(t))$   and $\gamma_i \colon I_i \ra
[0,1]\times \mathcal{O}$ is any path joining $(t_{i-1},b_{i-1})$
to $(t_i,b_i).$\newline Now \eqref{homotopy} follows from the
above identities, because, by the  multiplicative property of the
parity, $\prod_{i=1}^n \sigma(M_i) =\sigma(M),$ where $M (t) =D_x
h(\gamma(t))$ and $\gamma$ is the concatenation of all paths
$\gamma_i, \, 1\le i\leq n.$  This completes the proof.
\end{proof}
We will need also the following  result. Recall that nonempty
subsets $A$, $B$ of a space $X$ are separated (in $X$) if there
exists  open (and hence closed)  neighborhoods $U_A\supset A$,
$U_B\supset B$ in $X$ such that $U_A\cap U_B=\emptyset$ and
$U_A\cup U_B=X.$ Two sets are connected (to each other) in $X$ if
there is a connected set $Y\subset X$ with $A\cap Y\neq\emptyset$
and $B\cap Y\neq\emptyset.$  Whyburn's Lemma (see \cite{Al}) says that  if $A$, $B$ are
closed subsets of a compact space $X$ that are not connected to
each other, then they are separated in $X$.

Now let us prove our main Theorem \ref{Theorem-A}.
\begin{proof} We first prove \emph{$[i]$.}
Let $X=\lr$ and let $\cS =G^{-1}(\oo) \subset S^1\times X.$ $\cS$
is a locally compact space and in fact $\sigma$-compact, since
 $\cS = \bigcup\limits_{k\in \N} \cS \cap \left(S^1\times \bar B(\oo,k)\right).$ Let $\cC_0$ be
the connected component  of $ \cT_0$ in $\cS$. Suppose that
$\cC_0$ is bounded and let $ W$ be any bounded closed neighborhood
of $\cC_0.$  Since  $\cC_0$ is a maximal connected set,  $A:=\cS
\cap
\partial W$ is not connected with $\cC_0$ in the compact space
$\cS\cap W.$ Therefore there exist two compact subsets  $K_0, K_1$
of $\cS\cap W$ separating the component $\mathcal{C}_0$ from $A.$
Let $d=\text{dist}(K_0,K_1)>0,$  and let $\Omega:=\{(\la,\xx)\in
S^1\times X\mid d((\la,\xx),K_0)<1/2d\}.$ Then $\Omega$ is an open
bounded neighborhood of $\cC_0$ in $S^1\times X$ such that
$G(\la,\xx) \neq \oo$ on $\partial\Omega.$ For simplicity, we can
assume that $\la_0$ satisfying Assumption $(A3)$ equals $1$. Let
$q \colon [0,1]\ra S^1$ be the identification map taking $0,1$
into $1\in S^1.$ Let us consider the homotopy   $H\colon [0,1]
\times X\ra X$ defined by $H(t,x)=G(q(t),x).$  Clearly $H$ is a
continuous family of $C\ö1$-Fredholm maps. Put
$\Omega':=p^{-1}(\Omega).$  By construction $\Omega'$ is an open
bounded subset of $[0,1]\times X$ and $H$ has no zeros on the
boundary of $\Omega'.$ We will apply  the generalized homotopy
principle to $H$ on $\Omega'$  in order  to obtain a
contradiction. For this we need to  show:

\begin{lemma}\label{proper-G}
The restriction of $H$ to any closed bounded subset of $[0,1]
\times X$  is proper.
\end{lemma}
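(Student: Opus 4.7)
The plan is to reduce the properness of $H$ on closed bounded subsets of $[0,1]\times X$ to the already established properness of $G_\la$ for fixed $\la$ (Theorem \ref{Theorem-properness}), using the equicontinuity provided by Lemma \ref{equicontinuity-properness}.

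Concretely, I would take a closed bounded subset $D\subset [0,1]\times X$ and a sequence $(t_n,\xx_n)\in D$ such that $H(t_n,\xx_n)=G(q(t_n),\xx_n)$ converges to some $\yy\in X.$ By compactness of $[0,1]$, after passing to a subsequence I may assume $t_n\to t_0$, and then $\la_n:=q(t_n)\to \la_0:=q(t_0)$ in $S^1$. The sequence $(\xx_n)$ is bounded in $X$, so Lemma \ref{equicontinuity-properness} applied to $(\xx_n)$ yields
\begin{equation*}
\|G(\la_0,\xx_n)-G(\la_n,\xx_n)\| \xrightarrow[n\to\infty]{} 0.
\end{equation*}
Combined with $G(\la_n,\xx_n)\to\yy$ this gives $G(\la_0,\xx_n)\to\yy$ in $X$.

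Now the sequence $(\xx_n)$ is bounded and $G_{\la_0}(\xx_n)$ converges. Applying Theorem \ref{Theorem-properness} to the map $G_{\la_0}\colon X\to X$ (which is proper on closed bounded subsets of $X$), there is a subsequence $(\xx_{n_k})$ converging to some $\xx_0\in X$. Passing to this subsequence, $(t_{n_k},\xx_{n_k})\to (t_0,\xx_0)$ in $[0,1]\times X$, and since $D$ is closed the limit lies in $D$. This establishes properness of $H$ on $D$.

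There is no real obstacle here since the two key ingredients—fiberwise properness and joint equicontinuity in $\la$—were arranged in the previous section; the argument is essentially a bookkeeping exercise showing that a convergent output of $H$ differs from a convergent output of $G_{\la_0}$ by a vanishing term, after which the fiberwise properness closes the argument.
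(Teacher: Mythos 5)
Your argument is correct and is essentially the paper's own proof: both pass to a subsequence with $t_n\to t_0$, use the equicontinuity of $\{G(\cdot,\xx_n)\}$ from Lemma \ref{equicontinuity-properness} to transfer the convergence of $H(t_n,\xx_n)$ to that of $G(q(t_0),\xx_n)$, and then invoke the fiberwise properness of Theorem \ref{Theorem-properness} to extract a convergent subsequence. The only cosmetic difference is that the paper phrases properness as compactness of $(H_{\mid D})^{-1}(K)$ for compact $K$ before reducing to the same sequential statement you start from.
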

\begin{proof}
Let $K$ be a compact subset of $X $ and let $D$ be a closed
bounded subset of $[0,1] \times X.$ We  have to  show that
$(H_{\mid D})^{-1}(K)$ is compact. To this end, take any sequence
$(t_n,\xx_n)\in(H_{\mid D})^{-1}(K).$ Without loss of generality
we can assume that there exist $t_0\in S^1$ and $\yy\in X$ such
that
\begin{equation*}
t_n\xrightarrow[n\rightarrow\infty]{}t_0\in [0,1] \text{ and }
\|H(t_n,\xx_n)-\yy\|\xrightarrow[n\rightarrow\infty]{}0.
\end{equation*}
Since, by Lemma \ref{equicontinuity-properness}, the family of functions $\{G(\cdot,\xx_n)\colon S^1\ra
\lc\}_{n\in \Z}$ is equicontinuous and $H(\cdot,\xx_n)=G(q(\cdot),\xx_n)$, we infer that the family $\{H(\cdot,\xx_n)\colon S^1\ra \lc\}_{n\in \Z}$ is also equicontinuous. Now we will show that
\begin{equation}\label{52}
\|H(t_0,\xx_n)-\yy\|\xrightarrow[n\rightarrow\infty]{}0.
\end{equation}
For this purpose, fix $\varepsilon>0$. Then there exists $k_0>0$ such that
\begin{align*}
&\|H(t_m,\xx_n)-H(t_0,\xx_n)\|<\varepsilon/2 \text{ for }m\geq k_0 \text{ and for all }n\in \N,\\
&\|H(t_n,\xx_n)-y\|<\varepsilon/2 \text{ for }n\geq k_0.
\end{align*}
Hence
\begin{equation*}
\|H(t_0,\xx_n)-\yy\|\leq \|H(t_n,\xx_n)-\yy\|+\|H(t_n\xx_n)-H(t_0,\xx_n)\|<\varepsilon/2+\varepsilon/2=\varepsilon,
\end{equation*}
for $n\geq k_0$, which proves \eqref{52}.

From Theorem \ref{Theorem-properness} it follows that
$H_{t_0}\colon X\ra X$ is proper on closed and bounded subsets of
$X$ and therefore there exists a subsequence $(\xx_{n_k})$ of
$(\xx_n)$ and $\xx\in X$ such that
$\|\xx_{n_k}-\xx\|\xrightarrow[k\rightarrow\infty]{}0$. Therefore,
$(t_{n_k},\xx_{n_k})\xrightarrow[k\rightarrow\infty]{} (t_0,\xx)$
in $D.$ Thus we conclude that $(H_{\mid D})^{-1}(K)$ is compact,
which completes the proof of lemma.
\end{proof}

By the above lemma  $H$ is an admissible homotopy  with
$H_0:=H(0,\cdot)=H(1,\cdot)=:H_1$. Furthermore, we can take
$b=\oo$ as the base point for both $H_0$ and $H_1$ since, by
$(A3),$ $DH_i(\oo)=DG_1(\oo)$ is an isomorphism.  Now let us apply
Lemma \ref{generalized-homotopy}  choosing  as  path joining $(
0,\oo)$ with $(1,\oo)$ the path  $\gamma(t)=(t,\oo).$  It follows
then, that
\begin{equation}\label{varhom}
\text{deg}_{\oo}(H_0,\Omega_0',\oo) =\sigma(M)
\text{deg}_{\oo}(H_1,\Omega_1',\oo),
\end{equation}

where $M$ is the closed path of Fredholm operators given by
$M(t):=D_xH(t,\oo)= DH_t(\oo)$. On the other hand, Assumption
$(A3)$ implies that $\oo$ is the only solution of $H_i(\xx);
i=0,1$ which is a regular point  of $H_i$ since $DH_i(\oo)$ is an
isomorphism. By  definition of the base point degree
(\cite{Pej-Rab}) and Assumption $(A3)$, one has
\begin{equation*}
\text{deg}_{\oo}(H_0,\Omega'_0,\oo)=\text{deg}_{\oo}(H_1,\Omega'_1,\oo)=1,
\end{equation*}
which in turn implies, by \eqref{varhom}, that $\sigma(M)=1$. But,
by Lemma \ref{parity-lemma} and \eqref{w}, this contradicts our
assumption \eqref{w1}.\newline\indent In order to prove $[ii]$ we
observe that $[i]$ implies that the closure of $\cS$ in $S^1\times
X^+$ is a compact space and the closure of $\cC_0$ in this space
is a connected set intersecting both $\cB_0$ and $\cB_\infty.$  In
order to conclude the proof of $[ii]$ it is enough to use the
following slightly improved version of Whyburn's lemma:
\begin{proposition}[\cite{Al}, Proposition 5]\label{whyburn's-lemma}
Suppose $A$ and $B$ are closed and not separated in a compact
space $X.$ Then there exists a connected set $D\subset X\setminus
(A\cup B)$  such that $\bar D\cap A\neq \emptyset$ and $\bar D\cap
B\neq \emptyset.$
\end{proposition}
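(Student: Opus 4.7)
The plan is to extract a minimal subcontinuum of $X$ joining $A$ and $B$, then take its portion outside $A\cup B$ as the desired set $D$. Throughout I assume $A\cap B=\emptyset$ (which is the case relevant to the application). By Whyburn's classical lemma, applied to the disjoint closed non-separated pair $A,B$ in the compact Hausdorff space $X$, there exists a continuum $K_0\subset X$ meeting both $A$ and $B$. Order the family $\mathcal K$ of all such continua by reverse inclusion; for any chain in $\mathcal K$, the intersection is a nonempty continuum (nested intersections of continua in a compact Hausdorff space are themselves continua) still meeting both $A$ and $B$. Zorn's lemma then yields a minimal element $K\in\mathcal K$.

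Set $D:=K\setminus(A\cup B)$, which is open in $K$. It is nonempty, since otherwise $K\subset A\cup B$ would decompose into the disjoint nonempty closed sets $K\cap A$ and $K\cap B$, violating connectedness. Suppose for contradiction that $D=D_1\sqcup D_2$ is a separation into nonempty relatively clopen pieces; since $D$ is open in $K$, both $D_i$ are open in $K$. Let $F_i$ be the closure of $D_i$ in $K$, a subcontinuum with $F_i\setminus D_i\subset A\cup B$. If some $F_i$ meets both $A$ and $B$, then $F_i\subsetneq K$ (it is disjoint from the nonempty $D_{3-i}$), contradicting the minimality of $K$. If some $F_i$ meets neither $A$ nor $B$, then $F_i=D_i$ is clopen in $K$, forcing $D_i\in\{\emptyset,K\}$, which is impossible. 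After relabeling, $F_1$ meets only $A$ and $F_2$ meets only $B$. Then $G_1:=F_1\cup(K\cap A)$ and $G_2:=F_2\cup(K\cap B)$ are closed, cover $K$, and are disjoint: indeed $F_1\cap F_2\subset A\cup B$, and combined with $A\cap B=\emptyset$ and the meeting constraints this forces $F_1\cap F_2=\emptyset$, and the cross terms vanish similarly. This disconnects $K$, a contradiction. Hence $D$ is connected.

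Finally, if $\bar D\cap A=\emptyset$, then $\bar D\cup(K\cap B)$ and $K\cap A$ are disjoint nonempty closed sets covering $K$, contradicting its connectedness; the symmetric argument yields $\bar D\cap B\neq\emptyset$. Thus $D\subset X\setminus(A\cup B)$ is the connected set required. The main obstacle is the connectedness step for $D$: one must carefully combine the disjointness of $A,B$, the case analysis on how $F_1$ and $F_2$ can touch $A\cup B$, and the minimality of $K$ to manufacture a forbidden disconnection of the continuum $K$.
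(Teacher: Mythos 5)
The paper does not prove this proposition at all: it is quoted verbatim from Alexander's primer \cite{Al}, so your argument can only be judged on its own merits. Your overall strategy (pass to a continuum joining $A$ and $B$ via Whyburn's lemma, shrink it to an irreducible one by Zorn, and show that the part of it outside $A\cup B$ does the job) is a legitimate classical route, and several of the steps are fine: the Zorn argument, the nonemptiness of $D$, and the final step showing $\bar D$ meets both $A$ and $B$ are all correct.

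However, there is a genuine gap in the connectedness argument, precisely at the sentence ``Let $F_i$ be the closure of $D_i$ in $K$, a subcontinuum.'' The sets $D_1,D_2$ are arbitrary clopen pieces of the open set $D\subset K$; since $K$ is a general continuum (not locally connected), $D_i$ and hence $F_i=\overline{D_i}$ can have many components, so $F_i$ need not be a continuum. This matters exactly where you use it: in the case ``$F_i$ meets both $A$ and $B$'' you derive a contradiction from the minimality of $K$, but minimality only forbids \emph{proper subcontinua} of $K$ meeting both $A$ and $B$; a disconnected compact subset meeting both is perfectly compatible with minimality. That case is the heart of the theorem, and as written your proof simply does not address it. (The remaining cases are essentially sound, modulo the unmentioned possibility that both $F_1$ and $F_2$ meet only $A$, which is excluded by the same disconnection trick as your $G_1,G_2$ argument.) Repairing the gap is not a one-line matter: one needs, e.g., the boundary-bumping lemma (every component of $\overline{D_i}$ meets $\overline{D_i}\setminus D_i\subset A\cup B$) together with the fact that in a compact Hausdorff space components are intersections of clopen sets, so that when no single component of $F_i$ meets both $A$ and $B$ one can split $F_i$ into two clopen pieces, one missing $A$ and one missing $B$, and then regroup the separation of $D$ accordingly before running your final disconnection argument. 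Without some such additional input, the proof is incomplete. A smaller point: you restrict to $A\cap B=\emptyset$; this is indeed the case needed in the paper (and the statement is problematic without it), but it should be flagged as a hypothesis rather than an assumption made in passing.
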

\end{proof}
\section{Example}
Now we are going to illustrate the content of Theorem
\ref{Theorem-A} formulated in Section 2 and the techniques
developed in this paper. Fix $0<\alpha< 1 $
and $\beta>1$. For
$\la=\exp(i\theta)$, $0\leq \theta\leq 2\pi$, we define $a\colon
S^1\ra GL(2)$ as follows
\begin{equation*}
a(\la)=a(\exp i\theta ):=
\begin{pmatrix}
 \displaystyle \alpha+(\beta-\alpha)\sin^2\left(\frac{\theta}{2}\right) &\displaystyle \frac{\alpha-\beta}{2}\sin(\theta) \\
   \displaystyle \frac{\alpha-\beta}{2}\sin(\theta) &  \displaystyle \alpha+(\beta-\alpha)\cos^2\left(\frac{\theta}{2}\right)
\end{pmatrix}.
\end{equation*}
Then we can consider  the  linear nonautonomous system ${\aa}
=(a_n (\la) ) \colon \Z\times S^1 \ra GL(2) $ defined by
\begin{equation}\label{constant}
a_n(\la) =
\begin{cases} a(\la)& \hif\; n\geq 0,
 \\ a(1)
 &\hif\; n<0.
\end{cases}
\end{equation}
 Since
independently of  $\la \in S^1$ the matrix $a(\la)$ has two
eigenvalues  $\alpha\in (0,1)$ and $\beta\in (1,\infty),$ the
system $\aa\colon \Z\times S^1 \ra GL(2)$ is asymptotically
hyperbolic. We will  apply our results to nonlinear perturbations
of $\aa\colon \Z\times S^1 \ra GL(2).$ We compute the asymptotic
stable bundles of $\aa$  at $\pm\infty\!:$
\begin{align*}
E^s(+\infty)&=\{(\la,u)\in S^1\times \R^2\mid
u=t(\cos(\theta/2),\sin(\theta/2)),
\la=\exp(i\theta),t\in\R\},\\
E^s(-\infty)&=\{(\la,u)\in S^1\times \R^2\mid u=(t,0),t\in\R\}.
\end{align*}
Thus $E^s(-\infty)$ is a trivial bundle and hence
$w_1(E^s(-\infty))=1.$ In order to compute $w_1(E^s(+\infty))$ we
notice that $v_\theta = (\cos(\theta/2),\sin(\theta/2))$ is a
basis for $ E^s_\theta(+\infty)$ which is the fiber of the
pullback $E' $ of $ E^s(+\infty)$ by the map $p\colon [0,2\pi] \ra
S^1$ defined by $p(\theta) = \exp(i \theta).$ Since $v_0=(1,0)$
and $v_{2\pi}=(-1,0),$ the determinant of the matrix $C$ arising
in \eqref{whitney} is $-1.$ Hence $w_1(E^s(+\infty))=-1 \neq
w_1(E^s(-\infty)).$

Let us consider the family $L$ of operators $L_\la$ defined by
\begin{equation*}
L_\la(\xx) (n)=
\begin{cases} x_{n+1}-a(\la)x_n & \hif\;  n\geq 0, \\ x_{n+1}-a(1)x_n & \hif\; n<0.
\end{cases}\end{equation*}
Then Remark \ref{ker} implies that $\Ker L_\la$ is isomorphic to $E^s(\la,+\infty)\cap E^u(\la,-\infty).$
But $E^u(-\infty)=\{(\la,u)\in S^1\times \R^2\mid u=(0,t),t\in\R\}$ and hence a nontrivial intersection
arises only for $\theta =\pi,$ i.e.,  $ \la =-1.$ Thus  $\Ker L_\la \neq \{\oo\}$ only if $\la=-1.$
 Theorem \ref{prop:ind} implies that $L_{\la}$ is a Fredholm operator of index:
$\ind(L_\la) =\dim E^s (\la,+\infty)-\dim E^s(\la,-\infty)=1-1=0.$
Hence we infer that $L_{\la}$ is an isomorphism for $\la\neq -1$, since $\Ker L_{\la}=\{\oo\}$, for $\la\neq -1$.
\newline\indent
Let $\hh\colon \Z\times S^1\times \R^2\ra\R^2$ be a continuous
family satisfying Assumption $(A1)$ and
\begin{enumerate}
\item[$(A2')$] $D_xh_n(\la,0) \xrightarrow[n\rightarrow\pm\infty]{}0$ uniformly
with respect to $\la\in S^1$;
\item[$(A3')$] for any $x\in \R^2$ and $\la\in S^1$,
$h_n(\la,x)\xrightarrow[n\rightarrow\pm\infty]{}
h^{\infty}_{\pm}(\la,x)$ (uniformly with respect to any bounded
set $B\subset \R^2$), and the following two difference equations
$x_{n+1}=a(\la)x_n+h^{\infty}_{\pm}(\la,x_n)$ admit only the
trivial solution $(x_n=0)_{n\in\Z}$, for all $\la\in S^1$;
\item[$(A4')$] for some  $\la_0\in S^1\setminus\{-1\}$ we have that
$D_x h_n(\la_0,0)=0$, for all $n\in\Z$, and the  difference
equation $x_{n+1}=a(\la_0)x_n+h_n(\la_0,x_n)$ admits only the
trivial solution $(x_n=0)_{n\in\Z}$.
\end{enumerate}
Now it is easily seen that whenever the nonlinear perturbation
$\hh$ verifies $(A1)$ and $(A2')$--$(A4')$ then the family
$\ff=\aa+\hh$ satisfies all the assumptions of Theorem
\ref{Theorem-A}. Therefore  it must have  a connected branch of
nontrivial homoclinic solutions joining $\cT_0$ with $\cT_\infty.$

\section*{Acknowledgements}
The first author is supported by  MIUR-PRIN 2009-Metodi
variazionali e topologici nei fenomeni nonlineari. The second
author is supported in part by Polish Scientific Grant N N201
395137.

\end{document}